\numberwithin{equation}{section}
\newcommand{\Z}{\ensuremath{\mathbb{Z}}}
\newcommand{\Q}{\ensuremath{\mathbb{Q}}}
\newcommand{\C}{\ensuremath{\mathbb{C}}}
\newcommand{\A}{\ensuremath{\mathbb{A}}}
\newcommand{\T}{\ensuremath{\mathbb{T}}}
\DeclareMathOperator{\PGL}{PGL}
\DeclareMathOperator{\GL}{GL}
\DeclareMathOperator{\Sp}{Sp}
\DeclareMathOperator{\supp}{supp}
\DeclareMathOperator{\Hom}{Hom}
\DeclareMathOperator{\End}{End}
\DeclareMathOperator{\Tor}{Tor}
\DeclareMathOperator{\Ext}{Ext}
\DeclareMathOperator{\Gal}{Gal}
\DeclareMathOperator{\Res}{Res}
\DeclareMathOperator{\Ker}{ker}
\DeclareMathOperator{\Coker}{coker}
\DeclareMathOperator{\im}{im}
\DeclareMathOperator{\rk}{rk}
\DeclareMathOperator{\ord}{ord}
\DeclareMathOperator{\aug}{aug}
\DeclareMathOperator{\diag}{diag}
\DeclareMathOperator{\Spec}{Spec}
\DeclareMathOperator{\Spm}{Spm}
\DeclareMathOperator{\Std}{Std}
\DeclareMathOperator{\St}{St}
\DeclareMathOperator{\cind}{c-ind}
\DeclareMathOperator{\Coind}{Coind}
\DeclareMathOperator{\JL}{JL}
\DeclareMathOperator{\FM}{FM}
\DeclareMathOperator{\BC}{BC}
\DeclareMathOperator{\HH}{H}
\newcommand{\cf}{{\mathbbm 1}}
\newcommand{\II}{\ensuremath{\mathbb{I}}}	
\newcommand{\VV}{\ensuremath{\mathbb{V}}}
\newcommand{\pinfty}{\ensuremath{^{\p,\infty}}}
\newcommand{\Co}{E}
\newcommand{\Fu}{\ensuremath{\mathcal{C}}}
\newcommand{\An}{\ensuremath{\mathcal{A}}}
\newcommand{\Di}{\ensuremath{\mathcal{D}}}
\newcommand{\Ws}{\ensuremath{\mathcal{W}}}
\DeclareMathOperator{\red}{red}
\DeclareMathOperator{\add}{add}
\DeclareMathOperator{\cont}{ct}
\DeclareMathOperator{\an}{an}
\DeclareMathOperator{\sph}{sph}
\newcommand{\m}{\ensuremath{\mathfrak{m}}}
\newcommand{\p}{\ensuremath{\mathfrak{p}}}
\newcommand{\q}{\ensuremath{\mathfrak{q}}}
\newcommand{\LI}{\mathcal{L}}
\DeclareMathOperator{\univ}{un}
\DeclareMathOperator{\ev}{ev}
\newcommand{\into}{\hookrightarrow}
\newcommand{\onto}{\twoheadrightarrow}
\newcommand{\too}{\longrightarrow}								
\newcommand{\mapstoo}{\longmapsto}
\newtheorem{Lem}{Lemma}[section]
\newlength{\@thlabel@width}%
\newcommand{\thmenumhspace}{\settowidth{\@thlabel@width}{\itshape1.}\sbox{\@labels}{\unhbox\@labels\hspace{\dimexpr-\leftmargin+\labelsep+\@thlabel@width-\itemindent}}}
\newtheorem{Pro}[Lem]{Proposition}
\newtheorem{Thm}[Lem]{Theorem}
\newtheorem{Def}[Lem]{Definition}
\newtheorem{Rem}[Lem]{Remark}
\newtheorem{Cor}[Lem]{Corollary}
\newtheorem{Exa}[Lem]{Example}
\newtheorem*{Hyp}{Hypothesis}
\author[L. Gehrmann]{Lennart Gehrmann}
\address{L. Gehrmann \\ Fakult\"at f\"ur Mathematik \\ Universit\"at Duisburg-Essen \\ Thea-Leymann-Stra\ss e 9 \\ 45127 Essen \\ Germany}
\email{lennart.gehrmann@uni-due.de}
\author[G. Rosso]{Giovanni Rosso}
\address{G. Rosso \\ Concordia University \\ Departments of Mathematics and Statistics \\ Montreal \\ Quebec \\ Canada}
\email{giovanni.rosso@concordia.ca}
\title[Big principal series, p-adic families and L-invariants]{Big principal series, p-adic families and L-invariants}
\subjclass[2010]{Primary 11F55; Secondary 11F33, 11F70, 11F75, 11F80}
\begin{document}

\begin{abstract}
In earlier work, the first named author generalized the construction of Darmon-style $\LI$-invariants to cuspidal automorphic representations of semisimple groups of higher rank, which are cohomological with respect to the trivial coefficient system and Steinberg at a fixed prime. 

In this paper, assuming that the Archimedean component of the group has discrete series we show that these automorphic $\LI$-invariants can be computed in terms of derivatives of Hecke-eigenvalues in $p$-adic families.
Our proof is novel even in the case of modular forms, which was established by Bertolini, Darmon, and Iovita.
The main new technical ingredient is the Koszul resolution of locally analytic principal series representations by Kohlhaase and Schraen. 

As an application of our results we settle a conjecture of Spie\ss: we show that automorphic $\LI$-invariants of Hilbert modular forms of parallel weight $2$ are independent of the sign character used to define them.
Moreover, we show that they are invariant under Jacquet--Langlands transfer and, in fact, equal to the Fontaine--Mazur $\LI$-invariant of the associated Galois representation.
Under mild assumptions, we also prove the equality of automorphic and Fontaine--Mazur $\LI$-invariants for representations of definite unitary groups of arbitrary rank.

Finally, we study the case of Bianchi modular forms to show how our methods, given precise results on eigenvarieties, can also work in the absence of discrete series representations.
\end{abstract}

\maketitle

\tableofcontents

\section*{Introduction}
Let $f=\sum_{n=1}^{\infty} a_n q^{n}$ be a normalized newform of weight $2$ and level $\Gamma_0(M)$ such that $M=pN$ with $p$ prime, $p\nmid N$ and $a_p=1$.
Inspired by Teitelbaum's work (cf.~\cite{Teitelbaum}) on $\LI$-invariants for automorphic forms on definite quaternion groups, Darmon in \cite{D} constructed the automorphic $\LI$-invariant $\LI(f)^{\pm}$ of $f$, depending a priori on a choice of sign at infinity.
Let $a_p(k)$ be the $U_p$-eigenvalue of the $p$-adic family passing through $f$.
In \cite{BDI} Bertolini, Darmon, and Iovita (see also \cite{Das05}) prove the following formula:
\begin{align}\label{basicformula}
\LI^{\pm}(f)=-2\left.\frac{d a_p}{dk}\right|_{k=2}.
\end{align}
In particular, the automorphic $\LI$-invariant is independent of the sign at infinity.
They also prove a similar formula for Orton's $\LI$-invariant (cf.~\cite{Orton}) in higher weight.  
Moreover, if $f$ admits a Jacquet-Langlands transfer $\JL(f)$ to a definite quaternion group, which is split at $p$, they show the analogous formula
$$\LI_T(\JL(f))=-2\left.\frac{d a_p}{dk}\right|_{k=2}$$
for Teitelbaum's $\LI$-invariant, thus proving that automorphic $\LI$-invariants are preserved under Jacquet--Langlands transfers.
This result was extended to Jacquet--Langlands transfers to indefinite quaternion groups over the rationals, which are split at $p$, by Dasgupta--Greenberg, Longo--Rotger--Vigni and Seveso (see \cite{DG}, \cite{LRV}, \cite{Seveso}).\\

Over the last couple of years the construction of automorphic $\LI$-invariants was generalized to various settings, e.g.~to Hilbert modular cusp forms of parallel weight $2$ by Spie\ss~(see \cite{Sp}) and to Bianchi modular cusp forms of even weight by Barrera and Williams (see \cite{BWi}).
These generalizations have been defined as automorphic $\LI$-invariants appear naturally in many context, from proving exceptional zero conjecture formulae to constructing and studying Stark--Heegner points. 

Most recently, the first named author defined automorphic $\LI$-invariants for certain cuspidal automorphic representations of higher rank semi-simple groups over a number field $F$, which are split at a fixed prime $\p$ of $F$ (see \cite{Ge4}).
The most crucial assumptions on the representation $\pi$ are that it is cohomological with respect to the trivial coefficient system and that the local factor $\pi_\p$ of $\pi$ at $\p$ is the Steinberg representation.
Our main aim is to prove the analogue of equation \eqref{basicformula} for these $\LI$-invariants.
Previous works used explicit computations with cocycles and it seems unlikely that one can generalize these to higher rank groups;
instead we give a new, more conceptual approach, which is novel even in the already known cases. \\

In Section \ref{firstsection} we recall the definition of automorphic $\LI$-invariants.
Just as in the case of modular forms these $\LI$-invariants depend on a choice of sign character at infinity.
They also depend on a choice of degree of cohomology, in which the representation occurs.
For this introduction we suppress it since we are mostly interested in the case that there is only one interesting degree. 
Given a simple root $i$ of the group and a sign character $\epsilon$ the space of $\LI$-invariants 
$$ \LI_{i}(\pi,\p)^{\epsilon} \subset \Hom^{\cont}(F_{\p}^{\ast},\Co)$$
is a subspace of codimension at least one.
Here $\Co$ denotes a large enough $p$-adic field.
If strong multiplicity one holds, its codimension is exactly one.
Whether a character of $F_\p^{\ast}$  belongs to the space of $\LI$-invariants is decided by certain maps between cohomology groups of $\p$-arithmetic subgroups with values in duals of (locally analytic) generalized Steinberg representations.
These maps are induced by cup products with one-extensions of the smooth generalized Steinberg representation corresponding to the simple root $i$ with the locally analytic Steinberg representations (see Section \ref{Extensions} for a description of these extensions due to Ding, cf.~\cite{Ding}).\\

As a first step, in section \ref{Principal} we show that one can replace generalized Steinberg representations by locally analytic principal series representations and the extension classes by infinitesimal deformations of these principal series.
As a consequence, we prove an automorphic analogue of the Colmez--Greenberg--Stevens formula (see Proposition \ref{CGS}).
Let us remind ourselves that the Colmez--Greenberg--Stevens formula (see for example Theorem 3.4 of \cite{Ding}) states that one can compute Fontaine--Mazur $\LI$-invariants of Galois representations by deforming them in $p$-adic families.
Our analogue states that one can compute automorphic $\LI$-invariants from the cohomology of $\p$-arithmetic groups with values in duals of big principal series representations, i.e.~parabolic inductions of characters with values in units of affinoid algebras.\\

Thus, we reduce the problem to producing classes in these big cohomology groups.
Here is where we need to impose further restrictions.
Firstly, we assume that the group under consideration is adjoint.
Under this assumption Kohlhaase and Schraen constructed a Koszul resolution of locally analytic principal series representations (cf.~\cite{KoSch}), which we recall in Section \ref{Koszul}.
Using that resolution we can lift overconvergent cohomology classes which are common eigenvectors for all $U_\p$-operators to big cohomology classes.
Secondly, in order to have a nice enough theory of families of overconvergent cohomology classes (in the spirit of \cite{AshSt}) we consider only groups whose Archimedean component fulfils the Harish-Chandra condition.
This implies that the automorphic representation we study only shows up in the middle degree cohomology of the associated locally symmetric space.
We further assume that the map from the eigenvariety to the weight space $\Ws$ is étale at the point corresponding to the automorphic representation $\pi$.
Étaleness is implied by a suitable strong multiplicity one assumption and, thus, holds for example for Hilbert modular forms. Under these hypotheses we can show that we can lift the cohomology class corresponding to $\pi$ to a big cohomology class valued in functions on an open affinoid neighbourhood of the trivial character in weight space (see Theorem \ref{thm:locfree}).
This allows us give the generalization of \eqref{basicformula} in Theorem \ref{thm:Linvderivative}.
In particular, we see that automorphic $\LI$-invariants are codimension one subspaces under our étaleness assumption.\\

As a first application, in Section \ref{JL} we prove a conjecture of Spie\ss{} (cf.~\cite{Sp}, Conjecture 6.4): we show that the $\LI$-invariants of Hilbert modular forms of parallel weight $2$ are independent of the sign character used to define them.
We further show that in this situation automorphic $\LI$-invariants are invariant under Jacquet--Langlands transfers to quaternion groups which are split at $\p$.
In fact, we show that all these $\LI$-invariants agree with the Fontaine--Mazur $\LI$-invariant of the associated Galois representation.\footnote{The same result has been obtained by Spie\ss~by different methods; see \cite{Sp3}.}
In particular, one can remove the assumptions in the main theorem of \cite{Sp} (see Theorem 6.10 (b) of \textit{loc.cit.}).
Furthermore, the equality of automorphic and Fontaine--Mazur $\LI$-invariants makes the construction of Stark--Heegner points for modular elliptic curves over totally real fields unconditional.
Similarly, we prove the equality of automorphic and Fontaine--Mazur $\LI$-invariants for definite unitary groups under mild assumptions in Section \ref{Galois}, {\it i.e.}~we give an alternative proof of the main result of \cite{Ding} for our global situation.\\

We end the paper by considering the easiest case of a group that does not fulfil the Harish-Chandra condition, i.e.~we study Bianchi modular forms.
As we cannot deform the automorphic representation over the whole weight space, in general one cannot compute the $\LI$-invariant completely in terms of derivatives of Hecke-eigenvalues.
But at least in case the Bianchi forms is the base change of a modular form, we overcome this problem;
we show that the $\LI$-invariants of the base change equal the $\LI$-invariant of the modular form.\\

The assumption that the coefficient system is trivial is not necessary for the arguments of this article.
But the construction of automorphic $\LI$-invariants relies on the existence of a well-behaved lattice in the Steinberg representation, which is not known for twists of the Steinberg by an algebraic representation in general. 
In case the existence of such a lattice is known, e.g.~if $G$ is a form of $\PGL_2$ by a result of Vignéras (see \cite{Vi}), one can easily generalize our results to arbitrary weights and non-critical slopes.

\subsection*{Notations}
If $X$ and $Y$ are topological spaces, we write $C(X,Y)$ for the set of continuous functions from $X$ to $Y$.
All rings are assumed to be commutative and unital.
The group of invertible elements of a ring $R$ will be denoted by $R^{\ast}$.
If $M$ is an $R$-module we denote the $q$-th exterior power of $M$ by $\Lambda^{q}_R M$.
If $R$ is a ring and $G$ a group, we will denote the group ring of $G$ over $R$ by $R[G]$.
Given topological groups $H$ and $G$ we write $\Hom^{\cont}(H,G)$ for the space of continuous homomorphism from $H$ to $G$.
Let $\chi\colon G\to R^{\ast}$ be a character.
We write $R[\chi]$ for the $G$-representation, which underlying $R$-module is $R$ itself and on which $G$ acts via the character $\chi$.
The trivial character of $G$ will be denoted by $\cf_G$.
Let $H$ be an open subgroup of a locally profinite group $G$ and $M$ an $R$-linear representation $M$ of $H$.
The \textit{compact induction} $\cind^{G}_{H}M$ of $M$ from $H$ to $G$ is the space of all functions $f\colon G\to M$ which have finite support modulo $H$ and satisfy $f(gh)=h^{-1}.f(g)$ for all $h\in H,\ g\in G$.

\subsection*{Acknowledgements}
While working on this manuscript the first named author was visiting McGill University, supported by Deutsche Forschungsgemeinschaft, and he would like to thank these institutions.
In addition, the authors would like to thank Henri Darmon, Michael Lipnowski, Vytautas Pa\u{s}k\={u}nas and Chris Williams for several intense and stimulating discussions. We also thank the referee for their careful reading of the paper and many suggestions.
\subsection*{The setup}
We fix an algebraic number field $F$.
In addition, we fix a finite place $\p$ of $F$ lying above the rational prime $p$ and choose embeddings
$$\overline{\Q_p} \hookleftarrow \overline{\Q}\hookrightarrow \C.$$

If $v$ is a place of $F$, we denote by $F_{v}$ the completion of $F$ at $v$.
If $v$ is a finite place, we let $\mathcal{O}_{v}$ denote the valuation ring of $F_{v}$ and $\ord_{v}$ the additive valuation such that $\ord_{v}(\varpi)=1$ for any local uniformizer $\varpi\in\mathcal{O}_{v}$.

Let $\A$ be the adele ring of $F$, i.e~the restricted product over all completions $F_{v}$ of $F$.
We write $\A^\infty$ (respectively $\A\pinfty$) for the restricted product over all completions of $F$ at finite places (respectively finite places different from $\p$).
More generally, if $S$ is a finite set of places of $F$ we denote by $\A^{S}$ the restricted product of all completions $F_v$ with $v\notin S$.

If $H$ is an algebraic group over $F$ and $v$ is a place of $F$, we write $H_v=H(F_v)$.
We put $H_\infty=\prod_{v\mid\infty}H_v$.

Throughout the article we fix a connected, adjoint, semi-simple algebraic group $G$ over $F$.
We assume that the base change $G_{F_\p}$ of $G$ to $F_\p$ is split.
Let $K_\infty\subseteq G_\infty$ denote a fixed maximal compact subgroup.
The integers $\delta$ and $q$ are defined via
\begin{align*}\delta&=\rk G_\infty - \rk K_\infty \\
\intertext{and}
2 q+\delta&=\dim G_\infty - \dim K_\infty.
\end{align*}

At last, we fix a cuspidal automorphic representation $\pi=\otimes_v \pi_v$ of $G(\A)$ with the following properties:
\begin{itemize}
\item $\pi$ is cohomological with respect to the trivial coefficient system,
\item $\pi$ is tempered at $\infty$ and
\item $\pi_\p$ is the (smooth) Steinberg representation $\St^{\infty}_{G_{\p}}(\C)$ of $G_\p$.
\end{itemize}
We denote by $\Q_\pi\subseteq\overline{\Q}$ a fixed finite extension of $\Q$ over which $\pi^{\infty}$ has a model (see Theorem C of \cite{FabianRat} for the existence of such an extension).

\begin{Hyp}[SMO]\label{Hyp1}
We assume that the following strong multiplicity one hypothesis on $\pi$ holds:
If $\pi^{\prime}$ is an automorphic representation of $G$ such that 
\begin{itemize}
\item $\pi_v^{\prime}\cong\pi_v$ for all finite places $v\neq\p$,
\item $\pi_\p^{\prime}$ has an Iwahori-invariant vector and
\item $\pi^{\prime}_\infty$ has non-vanishing $(\mathfrak{g},K_{\infty}^{\circ})$-cohomology,
\end{itemize}
then $\pi$ is cuspidal, $\pi^{\prime}_\p\cong\pi_\p$ and $\pi_\infty$ is tempered.
\end{Hyp}

It is known that this holds for cuspidal representations of $\mathrm{GL}_n$ by work of Jacquet--Piateski-Shapiro--Shalika \cite{JS1,JS2, PSMultiplicityOne} and thus in particular, for representations of $G=\mathrm{PGL}_n$.

For general groups strong multiplicity one fails, see for example \cite{HowePS}.
Hence, it is harder to find explicit results about representations $\pi$ for which the hypothesis SMO holds, but nevertheless  it is expected that SMO should hold in many cases; for example for  generic representation of $\mathrm{GSp}_4$ \cite{SoudryUnique}.
The same strategy of {\it loc. cit.} could apply as long as we have an injective transfer from (a subclass of) representations of $G$ to representation of $\mathrm{GL}_n$, e.g., tempered  representations of unitary groups that at each prime are not endoscopic.

\section{Automorphic L-invariants}\label{firstsection}
In the following we briefly sketch the construction of automorphic $\LI$-invariants. For more details see \cite{Ge4}.
\subsection{Extensions}\label{Extensions}
In this section we recall the computation of certain $\Ext^{1}$-groups of (locally analytic) generalized Steinberg representations due to Ding (see \cite{Ding}).
We fix a finite extension $\Co$ of $\Q_p$. If $V$ and $W$ are admissible locally $\Q_p$-analytic $\Co$-representations of $G_\p$, we write $\Ext^1_{\an}(V,W)$ for the group of locally analytic extensions of $V$ by $W$.

Given an algebraic subgroup $H\subseteq G_{F_\p}$, we denote the group of $F_\p$-valued points of $H$ also by $H$.
We fix a Borel subgroup $B$ of the split group $G_{F_\p}$ and a maximal split torus $T\subseteq B$ and denote by $\Delta$ the associated basis of simple roots.
For a subset $I\subseteq\Delta$ we let $P_I\supseteq B$ be the corresponding parabolic containing $B$.

Suppose $M$ is a smooth representation of $P_I$ over a ring $R$; we define its smooth induction to $G_\p$ as
$$i_{P_I}^{\infty}(M)=\left\{f\colon G_\p\to M \mbox{ locally constant}\mid f(pg)=p.f(g)\ \forall p\in P_I,\ g\in G_\p \right\}.$$
The generalized $R$-valued (smooth) Steinberg representation associated with $I\subseteq \Delta$ is given by the quotient
$$v^{\infty}_{P_I}(R)= i^{\infty}_{P_I}(R) / \sum_{I\subset J\subset \Delta, I\neq J} i^{\infty}_{P_J}(R).$$
Likewise, if $V$ is a locally $\Q_\p$-analytic $\Co$-representation of $P_I$, we define its locally analytic induction to $G_\p$ as the space of functions
$$\II_{P_I}^{\an}(V)=\left\{f\colon G_\p\to \tau \mbox{ locally } \Q_p\mbox{-analytic}\mid f(pg)=p.f(g)\ \forall p\in P_I,\ g\in G_\p \right\}.$$
We define the locally analytic generalized Steinberg representation with respect to $I$ as the quotient
$$\VV^{\an}_{P_I}(\Co)=\II_{P_I}^{\an}(\Co)/\sum_{I\subset J\subset \Delta, I\neq J} \II^{\an}_{P_J}(\Co).$$
We put $\St_{G_\p}^{\infty}(R)=v^{\infty}_B(R)$ and $\St_{G_\p}^{\an}(\Co)=\VV^{\an}_B(\Co)$.
Similarly, replacing locally analyticity with continuity we define the continuous Steinberg representation $\St_{G_\p}^{\cont}(\Co)$ and, more generally, $\VV^{\cont}_{P_I}(\Co)$.
It is easy to see that $\VV^{\cont}_{P_I}(\Co)$ is the universal unitary completion of both $v^{\infty}_{P_I}(\Co)$ and $\VV^{\an}_{P_I}(\Co)$.

Let $i\in\Delta$ be a simple root and $\lambda\in\Hom^{\cont}(B,\Co)$ a continuous homomorphism. Note that $\lambda$ is automatically locally analytic and is trivial on the unipotent radical of $B$. Thus it can be identified with a character on $T$. 
We write $\tau_\lambda$ for the the two-dimensional representation of $B$ given by
\begin{align}\label{tau}
\tau_\lambda(b)=\begin{pmatrix} 1 & \lambda(b)\\ 0 & 1\end{pmatrix}.
\end{align}
By the exactness of the parabolic induction functor for locally analytic extensions (see \cite{Kohlhaase}, Proposition 5.1 and Remark 5.4) we have a short exact sequence of the form
$$0\too \II_{B}^{\an}(\Co)\too \II_{B}^{\an}(\tau_\lambda)\too \II_{B}^{\an}(\Co)\too 0.$$
We write
\begin{align}\label{class1}
\widehat{\mathcal{E}}^{\an}(\lambda)\in \Ext^{1}_{\an}(\II^{\an}_{B}(\Co),\II_{B}^{\an}(\Co))
\end{align}
for the associated extension class. 
Further, we define the class
\begin{align}\label{class2}
\widetilde{\mathcal{E}}^{\an}_{i}(\lambda)\in \Ext^{1}_{\an}(i^{\infty}_{P_i}(\Co),\II_{B}^{\an}(\Co))
\end{align}
as the pullback of this extension along $i^{\infty}_{P_i}(\Co)\to \II^{\an}_{B}(\Co)$.
Finally, taking pushforward along $\II_{B}^{\an}(\Co)\to \St_{G_\p}^{\an}(\Co)$ yields the extension class
\begin{align}\label{class3}
\mathcal{E}^{\an}_{i}(\lambda)\in \Ext^{1}_{\an}(i^{\infty}_{P_i}(\Co),\St_{G_\p}^{\an}).
\end{align}

By an easy calculation we see that the map
$$\Hom^{\cont}(B,\Co)\too \Ext^{1}_{\an}(i^{\infty}_{P_i}(\Co),\St_{G_\p}^{\an}(\Co)),\ \lambda\mapstoo \mathcal{E}^{\an}_{i}(\lambda)$$
defines a homomorphism.
The inclusion $B\into P_i$ induces an injection
$$\Hom^{\cont}(P_i,\Co)\too \Hom^{\cont}(B,\Co).$$
The quotient can be identified with the space $\Hom^{\cont}(F_\p^{\ast},\Co)$ via the map
\begin{align}\label{root}
\Hom^{\cont}(F_\p^{\ast},\Co)\too \Hom^{\cont}(B,\Co)/\Hom^{\cont}(P_i,\Co),\ \lambda\mapstoo \lambda\circ i.
\end{align}
Alternatively, let $i^{\vee}$ denote the coroot associated with $i$.
Then, the kernel of the map
\begin{align}\label{coroot}
\Hom^{\cont}(B,\Co)\too \Hom^{\cont}(F_\p^{\ast},\Co),\ \lambda\mapstoo \lambda_{i}=\lambda\circ i^{\vee}
\end{align}
is equal to $\Hom^{\cont}(P_i,\Co)$ and, hence, the map induces an isomorphism
$$\Hom^{\cont}(B,\Co)/\Hom^{\cont}(P_i,\Co)\too \Hom^{\cont}(F_\p^{\ast},\Co),$$
which is the inverse of the isomorphism above up to multiplication by two. 
\begin{Thm}[Ding]\label{Ding}
The following holds:
\begin{enumerate}[(i)]
\item The map $\Hom^{\cont}(B,\Co)\to \Ext^{1}_{\an}(i^{\infty}_{P_i}(\Co),\St_{G_\p}^{\an}(\Co)),\ \lambda\mapsto \mathcal{E}^{\an}_{i}(\lambda)$ is surjective with kernel $\Hom^{\cont}(P_i,\Co)\subseteq \Hom^{\cont}(B,\Co)$.
\item The canonical map $\Ext^{1}_{\an}(v^{\infty}_{P_i}(\Co),\St_{G_\p}^{\an}(\Co))\to \Ext^{1}_{\an}(i^{\infty}_{P_i}(\Co),\St_{G_\p}^{\an}(\Co))$ is an isomorphism.
\item The induced map $\Hom^{\cont}(F_\p^{\ast},\Co)\too \Ext^{1}_{\an}(v^{\infty}_{P_i}(\Co),\St_{G_\p}^{\an}(\Co)),\ \lambda \mapsto \mathcal{E}^{\an}_{i}(\lambda\circ i)$ is an isomorphism.
\end{enumerate}
\end{Thm}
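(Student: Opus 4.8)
The plan is to deduce all three statements from a careful analysis of the long exact sequences obtained by applying $\Hom_{\an}(-,\St_{G_\p}^{\an}(\Co))$ to the short exact sequences relating the smooth inductions $i^\infty_{P_J}(\Co)$, together with the computation of the relevant $\Ext$-groups of principal series carried out by Ding. First I would record the structure of $\VV^{\an}_{P_i}(\Co)$ and of the parabolic inductions $\II^{\an}_{P_J}(\Co)$: exactness of locally analytic parabolic induction (Kohlhaase, Proposition 5.1 and Remark 5.4) gives, for each $J$, the filtration on $\II^{\an}_B(\Co)$ with graded pieces $\VV^{\an}_{P_J}(\Co)$, and dually a filtration expressing $\St^{\an}_{G_\p}(\Co)$ in terms of the $\VV^{\an}_{P_J}(\Co)$'s. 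The key input is that $\Hom_{\an}(\St^{\an}_{G_\p}(\Co),\St^{\an}_{G_\p}(\Co))=\Co$ and the higher analytic self-extensions of $\St^{\an}_{G_\p}(\Co)$ and the cross terms $\Ext^{j}_{\an}(\VV^{\an}_{P_J},\St^{\an}_{G_\p})$ for $J\neq\emptyset$ are controlled — these are exactly the computations Ding carries out via the Orlik–Strauch / Schneider–Stuhler description of Jordan–Hölder factors and the locally analytic Bruhat filtration.

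For part (ii), I would run the long exact sequence attached to the defining short exact sequence $0\to v^\infty_{P_i}(\Co)\to i^\infty_{P_i}(\Co)\to \bigoplus \text{(higher }i^\infty_{P_J})\to\cdots$; more precisely, $i^\infty_{P_i}(\Co)$ sits in an exact sequence whose other terms are the $i^\infty_{P_J}(\Co)$ with $J\supsetneq i$, and each of these maps, after applying $\Hom_{\an}(-,\St^{\an}_{G_\p})$, contributes $\Hom$- and $\Ext^1$-terms that vanish because $i^\infty_{P_J}(\Co)$ for $J\neq\emptyset$ has no nonzero analytic homomorphism to $\St^{\an}_{G_\p}(\Co)$ and no analytic extension by it (its Jordan–Hölder factors, being sub quotients of $\II^{\an}_{P_J}$ with $J\neq\emptyset$, do not match those of $\St^{\an}_{G_\p}$). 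Hence the connecting maps force $\Ext^1_{\an}(v^\infty_{P_i}(\Co),\St^{\an}_{G_\p}(\Co))\xrightarrow{\sim}\Ext^1_{\an}(i^\infty_{P_i}(\Co),\St^{\an}_{G_\p}(\Co))$.

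For part (i), I would first compute the total group $\Ext^1_{\an}(i^\infty_{P_i}(\Co),\St^{\an}_{G_\p}(\Co))$ and identify it with $\Hom^{\cont}(B,\Co)/\Hom^{\cont}(P_i,\Co)$. The pushforward–pullback construction of $\mathcal{E}^{\an}_i(\lambda)$ from $\widehat{\mathcal E}^{\an}(\lambda)$ shows the map $\lambda\mapsto\mathcal E^{\an}_i(\lambda)$ factors through the subquotient of $\Ext^1_{\an}(\II^{\an}_B(\Co),\II^{\an}_B(\Co))$ coming from the $\tau_\lambda$'s; Ding's computation of $\Ext^1_{\an}(\II^{\an}_B(\Co),\II^{\an}_B(\Co))$ (which is built out of $\Hom^{\cont}(T,\Co)$ together with a one-dimensional "position" contribution) shows this subquotient is precisely $\Hom^{\cont}(B,\Co)$, and that after the pullback to $i^\infty_{P_i}(\Co)$ and the pushforward to $\St^{\an}_{G_\p}(\Co)$ the contribution of characters inflated from $P_i$ dies: such a $\lambda$ makes $\tau_\lambda$ a self-extension of $\Co$ as a $P_i$-representation, so $\widetilde{\mathcal E}^{\an}_i(\lambda)$ becomes the class of $\II^{\an}_B$ of a $P_i$-extension, which splits after pushforward to $\St^{\an}_{G_\p}$ by the branching computation. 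Surjectivity then follows by a dimension count (both sides have the same $\Co$-dimension, namely $\dim_{\Co}\Hom^{\cont}(F_\p^\ast,\Co)=[F_\p:\Q_p]+1$), or more cleanly by exhibiting the composite $\Hom^{\cont}(B,\Co)\to\Ext^1_{\an}(i^\infty_{P_i},\St^{\an}_{G_\p})\to\Ext^1_{\an}(\II^{\an}_B(\Co),\St^{\an}_{G_\p})$ and tracking Ding's isomorphism there. Finally, part (iii) is immediate: combining (ii) with (i) and the identification \eqref{root} of $\Hom^{\cont}(B,\Co)/\Hom^{\cont}(P_i,\Co)$ with $\Hom^{\cont}(F_\p^\ast,\Co)$ via $\lambda\mapsto\lambda\circ i$ gives the stated isomorphism.

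The main obstacle is the precise bookkeeping in part (i): one must verify that the pullback-then-pushforward operation identifies the image of $\Hom^{\cont}(B,\Co)$ inside $\Ext^1_{\an}(i^\infty_{P_i}(\Co),\St^{\an}_{G_\p}(\Co))$ with the whole group and has exactly $\Hom^{\cont}(P_i,\Co)$ as kernel — this requires knowing that the only analytic self-extensions of $\St^{\an}_{G_\p}(\Co)$ relevant here come from characters of $T$, and that no "extra" extension classes of $i^\infty_{P_i}(\Co)$ by $\St^{\an}_{G_\p}(\Co)$ exist beyond those produced by the $\tau_\lambda$ construction. I would isolate this as the technical heart and reduce it to the $\Ext$-computations of Ding (his Theorem 3.4 and the surrounding analysis), citing them rather than redoing the Bruhat-filtration spectral sequence. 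Everything else — the long exact sequence manipulations in (ii), the factorization in (iii) — is formal once those inputs are in place.
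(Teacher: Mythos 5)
The paper does not actually carry out a proof of parts (i) and (ii): it observes that (iii) is a formal consequence of (i) and (ii) plus the coroot identification \eqref{root}/\eqref{coroot}, and for (i) and (ii) it simply cites Section~2.2 of Ding's paper (for $G=\PGL_n$) and Section~2.4 of \cite{Ge4} (for the general case). Your proposal, by contrast, sketches the internal argument that those references carry out, and in outline it matches them: (ii) via the long exact sequence attached to the Orlik-type resolution of $v^\infty_{P_i}$ by the $i^\infty_{P_J}$'s, using vanishing of $\Hom$ and analytic $\Ext^1$ from $i^\infty_{P_J}$ ($J\supsetneq\{i\}$) into $\St^{\an}_{G_\p}$; (i) by identifying $\Ext^1_{\an}(i^\infty_{P_i},\St^{\an}_{G_\p})$ with $\Hom^{\cont}(B,\Co)/\Hom^{\cont}(P_i,\Co)$ via the $\tau_\lambda$-construction, with the kernel statement coming from the observation that when $\lambda$ extends to $P_i$ the class $\mathcal E^{\an}_i(\lambda)$ dies; (iii) formally. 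So you are not proving something the paper proves differently — you are reconstructing the proof the paper defers to the literature.

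Two remarks on the substance of your sketch. First, your treatment of (ii) is clean but be careful that the vanishing you invoke — that $i^\infty_{P_J}$ for $J\neq\emptyset$ has no nonzero analytic $\Hom$ or $\Ext^1$ into $\St^{\an}_{G_\p}$ — is not a purely smooth Jordan--H\"older matching argument: $\St^{\an}_{G_\p}$ has locally analytic constituents beyond $\St^{\infty}_{G_\p}$, and one genuinely needs the Orlik--Strauch / Bruhat-filtration analysis you allude to. Second, and more significantly, the kernel computation in (i) is the real technical content and your sentence "splits after pushforward to $\St^{\an}_{G_\p}$ by the branching computation" is doing a lot of work. The correct mechanism (which is what Ding does) is that when $\lambda$ factors through $P_i$, the induced extension $\II^\an_B(\tau_\lambda)$ contains $\II^\an_{P_i}(\tau_\lambda)$, and tracking this through the pullback along $i^\infty_{P_i}\hookrightarrow\II^\an_B$ and the pushforward along $\II^\an_B\twoheadrightarrow\St^\an_{G_\p}$ produces the trivialization; I would spell this out rather than gesture at a "branching computation," since that phrase has no precise referent in this context. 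Beyond these cautions the outline is sound, and it is consistent with what the cited proofs do.
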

\begin{proof}
The third claim is a direct consequence of the first two.
For the proof of the first two claims in the case $G=\PGL_n$ see Section 2.2 of \cite{Ding}.
The general case is proven in Section 2.4 of \cite{Ge4}.
\end{proof}

\subsection{Flawless lattices}\label{Resolutions}
We recall the notion of flawless smooth representations over a ring $R$.

\begin{Def}
A smooth $R$-representation $M$ of $G_\p$ is called flawless if 
\begin{itemize}
\item $M$ is projective as an $R$-module and
\item there exists a finite length exact resolution
$$0\too C_m\too\cdots\too C_0\too M \too 0$$
of $M$ by smooth $R$-representations $C_i$ of $G_\p$, where each $C_i$ is a finite direct sum of modules of the form
$$\cind_{K_\p}^{G_\p}(L)$$
with $K_\p\subseteq G_\p$ a compact, open subgroup and $L$ a smooth representation of $K_\p$ that is finitely generated projective over $R$.
\end{itemize}
\end{Def}

The following is our main example:
\begin{Thm}[Borel--Serre]\label{stflaw}
The Steinberg representation $\St_{G_\p}^{\infty}(R)$ is flawless for every ring $R$.
\end{Thm}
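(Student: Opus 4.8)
The plan is to build the Borel--Serre-type resolution directly from the Bruhat--Tits building $\mathcal{X}$ of $G_\p$. Since $G_{F_\p}$ is split, $\mathcal{X}$ is a contractible simplicial complex of dimension $d=\operatorname{rk} G_{F_\p}$ on which $G_\p$ acts simplicially, properly, and cocompactly. The simplicial chain complex of $\mathcal{X}$ with $R$-coefficients gives an exact augmented complex
\begin{equation*}
0\too C_d(\mathcal{X},R)\too\cdots\too C_0(\mathcal{X},R)\too R\too 0,
\end{equation*}
and for each $j$ the module $C_j(\mathcal{X},R)$ decomposes, by choosing orbit representatives of the (finitely many, by cocompactness) $G_\p$-orbits of $j$-simplices, as a finite direct sum of modules $\cind_{K_\p}^{G_\p}(L)$, where $K_\p$ is the (compact, open) stabilizer of a simplex and $L$ is the orientation character tensored over $R$ --- in particular a smooth $K_\p$-representation that is free of rank one over $R$, hence finitely generated projective. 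So $C_0(\mathcal{X},R)\to R$ exhibits the trivial representation $R=\cf_{G_\p}$ as having a length-$d$ resolution of exactly the required shape. This is the first step and it is essentially formal.

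The second step is to pass from the trivial representation to $\St^{\infty}_{G_\p}(R)$. The top-degree piece of the building resolution is the crucial one: by Solomon--Tits, the spherical building at infinity (equivalently, the homology of $\mathcal{X}$ ``at the boundary'') is a wedge of $(d-1)$-spheres, and its reduced top homology realizes the Steinberg module. Concretely, I would use the standard fact (Borel--Serre, and its $p$-adic/Bruhat--Tits incarnation) that $\St^{\infty}_{G_\p}(R)$ can be identified with the $R$-module of $R$-valued harmonic (or compactly supported top-degree) cochains on $\mathcal{X}$, or dually with a kernel/cokernel appearing in the oriented chain complex of $\mathcal{X}$ truncated appropriately. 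The cleanest route is: the augmented chain complex above, being exact, shows $\cf_{G_\p}$ has finite projective dimension $\le d$ in the category of smooth $R[G_\p]$-modules with objects of the stated induced form as the ``projectives''; then one uses the known exact sequence expressing $\St^{\infty}_{G_\p}(R)$ in terms of the generalized Steinberg representations $v^{\infty}_{P_I}(R)$ (the ``Solomon--Tits'' resolution
\begin{equation*}
0\too \St^{\infty}_{G_\p}(R)\too i^{\infty}_{B}(R)\too \bigoplus_{\lvert I\rvert=1} i^{\infty}_{P_I}(R)\too\cdots\too i^{\infty}_{G_\p}(R)=R\too 0\,),
\end{equation*}
together with the fact that each smooth induction $i^{\infty}_{P_I}(R)=\cind$ of the trivial $P_I$-representation is itself flawless. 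The latter follows by applying the building argument to the parabolic $P_I$ (or to the partial flag variety $G_\p/P_I$, which is compact, so functions on it are a compactly-induced module), since each $i^{\infty}_{P_I}(R)$ is $C^{\infty}(G_\p/P_I,R)$ and $G_\p/P_I$ carries a finite $G_\p$-equivariant simplicial/polysimplicial decomposition with compact open stabilizers.

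The third step is a homological-algebra assembly: flawless representations are closed under finite direct sums and, by a standard dimension-shifting / horseshoe argument, the class of modules admitting a finite-length resolution by finite sums of $\cind_{K_\p}^{G_\p}(L)$'s is closed under kernels of surjections between such modules (provided one already knows the kernels are $R$-projective, which holds here because everything in sight is $R$-projective, even $R$-free, on the nose). Splicing the Solomon--Tits resolution of $\St^{\infty}_{G_\p}(R)$ with the building resolutions of each $i^{\infty}_{P_I}(R)$ then produces a single finite-length flawless resolution. Projectivity of $\St^{\infty}_{G_\p}(R)$ over $R$ is immediate from its realization as (co)homology of a complex of $R$-free modules whose differentials' images are $R$-direct summands --- or simply from the fact that $\St^{\infty}_{G_\p}(\Z)$ is $\Z$-free and $\St^{\infty}_{G_\p}(R)=\St^{\infty}_{G_\p}(\Z)\otimes_{\Z}R$.

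The main obstacle I anticipate is not the existence of the building resolution but bookkeeping the bound on its length and checking the $R$-projectivity of all intermediate kernels uniformly in $R$ --- i.e.\ making sure every differential in the combined complex has $R$-projective (ideally $R$-free) image so that the ``horseshoe'' splicing stays inside the flawless class over an arbitrary commutative ring $R$. This is where one should invoke base change from $\Z$: prove everything for $R=\Z$, where freeness of Steinberg and of the induced modules is classical, and then tensor up, using that $\cind_{K_\p}^{G_\p}(L)\otimes_{\Z}R\cong\cind_{K_\p}^{G_\p}(L\otimes_{\Z}R)$ and that a finite-length $\Z$-free resolution remains exact after $\otimes_{\Z}R$ precisely because the homology (being $\St$, which is $\Z$-free) has no Tor.
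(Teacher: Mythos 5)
There is a genuine gap, and it comes from using the wrong complex. The paper's proof is a one-liner: Borel--Serre (\cite{BS2}, Theorem 5.6) identifies $\St_{G_\p}^{\infty}(\Z)$ with the \emph{top compactly supported cohomology} $H^\ell_c(\mathcal{X},\Z)$ of the Bruhat--Tits building $\mathcal{X}$, and shows $H^i_c(\mathcal{X},\Z)=0$ for $i<\ell$. Thus the compactly supported cochain complex
\begin{equation*}
0\too C^0_c(\mathcal{X},\Z)\too\cdots\too C^\ell_c(\mathcal{X},\Z)\too\St_{G_\p}^{\infty}(\Z)\too 0
\end{equation*}
is exact, and each $C^j_c(\mathcal{X},\Z)$ is a finite direct sum of $\cind_{G_\sigma}^{G_\p}(\varepsilon_\sigma)$ with $G_\sigma$ a compact open cell stabilizer and $\varepsilon_\sigma$ the orientation character. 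This is already a flawless resolution over $\Z$, and base change finishes as you indicate. Your first step produces instead the ordinary chain complex $0\to C_\ell\to\cdots\to C_0\to\Z\to 0$, which resolves the \emph{trivial} representation; $\St$ lives in the compactly supported cochain complex, which has the same modules but the transposed differentials, and one needs the Borel--Serre vanishing of $H^i_c$ for $i<\ell$ — this is not a formal consequence of contractibility.

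Your proposed route from $\Z$ to $\St$ then fails at two points. First, the Solomon--Tits sequence exhibits $\St_{G_\p}^{\infty}(R)$ as a \emph{sub}object: $0\to\St\to i^\infty_B\to\cdots\to R\to 0$. The definition of flawless asks for a resolution $\cdots\to C_0\to\St\to 0$ with $\St$ a cokernel; a finite co-resolution by flawless modules does not splice into a resolution, and there is no general duality available in the category of smooth representations over an arbitrary ring to flip it. Second, the auxiliary claim that each $i^{\infty}_{P_I}(R)=C^\infty(G_\p/P_I,R)$ is flawless because ``$G_\p/P_I$ carries a finite $G_\p$-equivariant decomposition with compact open stabilizers'' is not a valid argument: $G_\p/P_I$ is a single $G_\p$-orbit whose point stabilizers are the parabolics $gP_Ig^{-1}$, which are not compact, and no $G_\p$-stable cell decomposition of a $p$-adic flag variety has compact open stabilizers. (Each $i^{\infty}_{P_I}$ is in fact flawless, but one sees this by the same compactly supported cohomology argument applied to appropriate buildings, not via the flag variety.) The single missing idea that repairs both gaps is to replace the chain complex (and the spherical Tits building used in Solomon--Tits) by the compactly supported cochain complex of the affine building, where $\St$ sits as the top cohomology.
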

\begin{proof}
It is enough to prove that $\St_{G_\p}^{\infty}(\Z)$ is flawless.
By \cite{BS2}, Theorem 5.6., $\St_{G_\p}^{\infty}(\Z)$ can be identified with the cohomology with compact supports of the Bruhat-Tits building of $G_\p$.
Thus, its simplicial complex gives a flawless resolution of $\St_{G_\p}^{\infty}(\Z)$. 
\end{proof}

\subsection{Cohomology of $\p$-arithmetic groups}\label{Cohomology}
Let $A$ be a $\Q_p$-affinoid algebra in the sense of Tate.
Given a compact, open subgroup $K^\p\subseteq G(\A\pinfty)$, an $A[G_\p]$-module $V$ and an $A[G(F)]$-module $W$, which is free and of finite rank over $A$, we define $\Fu_A(K^{\p},V;W)$ as the space of all $A$-linear maps $\Phi\colon G(\A\pinfty)/K^\p \times V\to W$.
The $A$-module $\Fu_\Co(K^\p,V;W)$ carries a natural $G(F)$-action given by
$$(\gamma.\Phi)(g,v)=\gamma.(\Phi(\gamma^{-1}g,\gamma^{-1}.v)).$$
Suppose $V$ is a topological $A$-module equipped with a continuous $A$-linear $G_\p$-action we put
$\Fu^{\cont}_A(K^\p,V;W)=C(G(\A\pinfty)/K^{\p},\Hom_{A}^{\cont}(V,W))$.
Here $W$ is endowed with its canonical topology as a finitely generated free $A$-module.

Let $\Co$ be a finite extension of $\Q_p$ with ring of integers $\mathcal{O}_\Co$.
Suppose that $V$ is a smooth $\Co$-representation of $G_\p$ that admits a flawless $G_\p$-stable $\mathcal{O}_\Co$-lattice $M$.
Since $M$ is finitely generated as an $\mathcal{O}_\Co[G_\p]$-module, the completion of $V$ with respect to $M$ is the universal unitary completion $V^{\univ}$ of $V$.
The following automatic continuity statement holds (see \cite{Ge4}, Proposition 3.11).
\begin{Pro}\label{automatic}
Suppose that $V$ is a smooth $\Co$-representation of $G_\p$ that admits a flawless $G_\p$-stable $\mathcal{O}_\Co$-lattice $M$.
Then the canonical map
$$\HH^{d}(G(F),\Fu^{\cont}_\Co(K^\p,V^{\univ};\Co(\epsilon)))\too \HH^{d}(G(F),\Fu_\Co(K^\p,V;\Co(\epsilon)))$$
is an isomorphism for every character $\epsilon\colon \pi_0(G_\infty)\to\left\{\pm 1\right\}$, every compact, open subgroup $K^\p\subseteq G(\A\pinfty)$ and every degree $d\geq 0$.
\end{Pro}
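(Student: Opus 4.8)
The plan is to deduce the statement from a comparison of two complexes computing the cohomology groups in question, exactly as one proves that a ``continuous'' cohomology theory agrees with an ``abstract'' one once the coefficients are built out of compactly induced modules. First I would fix a flawless resolution
\begin{equation*}
0\too C_m\too\cdots\too C_0\too M\too 0
\end{equation*}
of the chosen $G_\p$-stable lattice $M$ by finite direct sums of modules $\cind_{K_\p}^{G_\p}(L)$ with $L$ finitely generated projective over $\mathcal{O}_\Co$, as guaranteed by the hypothesis (this exists for $\St_{G_\p}^\infty$ by Theorem \ref{stflaw}). Inverting $p$ gives a resolution of $V=M\otimes_{\mathcal{O}_\Co}\Co$ by modules $\cind_{K_\p}^{G_\p}(L_\Co)$. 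The point is that on such ``induced'' building blocks the natural map from continuous Hom's to abstract Hom's is an isomorphism, because a $G_\p$-equivariant map out of a compact induction is determined by its values on the finitely many coset representatives together with a $K_\p$-equivariant map out of the finite free module $L_\Co$, and on a finite free module continuity is automatic. Concretely, for each $i$ one checks that
\begin{equation*}
\Hom_\Co^{\cont}\bigl(\cind_{K_\p}^{G_\p}(L_\Co)^{\univ},\Co(\epsilon)\bigr)\too \Hom_\Co\bigl(\cind_{K_\p}^{G_\p}(L_\Co),\Co(\epsilon)\bigr)
\end{equation*}
is an isomorphism of $G(F)$-modules, using that the universal unitary completion of a compact induction of a finite free module is again a compact induction in the unitary category, and that a bounded linear functional on it is supported on finitely many cosets.

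Next I would globalize this. Applying $\Fu_\Co(K^\p,-;\Co(\epsilon))$ and its continuous variant $\Fu^{\cont}_\Co(K^\p,-;\Co(\epsilon))$ to the resolution above produces two cochain complexes of $G(F)$-modules, and the previous paragraph shows that the natural comparison map between them is a quasi-isomorphism — in fact an isomorphism term by term — once one notes that $\Fu_\Co(K^\p,-;W)$ and $\Fu^{\cont}_\Co(K^\p,-;W)$ turn the direct sums and the coset structure of $G(\A\pinfty)/K^\p$ into the same bookkeeping: locally constant functions on a profinite quotient modulo a compact open subgroup with values in a finite free module are the same whether or not one imposes continuity. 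Then I would feed both complexes into group cohomology of $G(F)$: since the completion of $V$ with respect to $M$ is the universal unitary completion $V^{\univ}$ (because $M$ is finitely generated over $\mathcal{O}_\Co[G_\p]$, as recorded just before the statement), the continuous complex really does compute $\HH^\bullet(G(F),\Fu^{\cont}_\Co(K^\p,V^{\univ};\Co(\epsilon)))$, while the abstract one computes $\HH^\bullet(G(F),\Fu_\Co(K^\p,V;\Co(\epsilon)))$. Comparing the two hyper-cohomology spectral sequences — or simply taking $G(F)$-cohomology of the isomorphism of complexes — yields the desired isomorphism in every degree $d$.

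The main obstacle, and the step I would spend the most care on, is the identification of the universal unitary completion of $\cind_{K_\p}^{G_\p}(L_\Co)$ and the verification that continuous $\Co(\epsilon)$-valued functionals on it are exactly the abstract $G_\p$-equivariant maps out of $\cind_{K_\p}^{G_\p}(L_\Co)$; this is where the hypothesis ``flawless'' does its real work, since it is precisely the condition ensuring that $M$ is a finitely generated $\mathcal{O}_\Co[G_\p]$-module built from honestly induced pieces, so that the completion is controlled and no extra functionals appear. Once that local statement is in hand, the passage through $\Fu_\Co$, $\Fu^{\cont}_\Co$ and then through $\HH^\bullet(G(F),-)$ is formal: one is just transporting an isomorphism of complexes of $G(F)$-modules through a fixed derived functor. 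A minor point to handle cleanly is that the resolution is finite, so there are no convergence issues in the spectral sequences and the argument goes through uniformly for all $d\ge 0$ and all sign characters $\epsilon$.
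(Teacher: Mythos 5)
The paper does not reprove this statement; it cites~\cite{Ge4}, Proposition~3.11. Your overall strategy (use the flawless resolution to reduce to compact inductions, then compare hypercohomology) is the right one, but there is a genuine gap in the key step. You assert that for $V=\cind_{K_\p}^{G_\p}(L_\Co)$ the natural map
\[
\Hom_\Co^{\cont}\bigl(V^{\univ},\Co(\epsilon)\bigr)\too \Hom_\Co\bigl(V,\Co(\epsilon)\bigr)
\]
is an isomorphism, and that ``a bounded linear functional on $V^{\univ}$ is supported on finitely many cosets.'' Both claims are false. The completion $V^{\univ}$ is (after choosing coset representatives) $c_0\bigl(G_\p/K_\p,L_\Co\bigr)$, whose continuous dual is $\ell^{\infty}\bigl(G_\p/K_\p,\Hom(L_\Co,\Co)\bigr)$, i.e.\ \emph{bounded} families of functionals indexed by the infinite discrete set $G_\p/K_\p$, while $\Hom_\Co(V,\Co)$ is the full product $\prod_{G_\p/K_\p}\Hom(L_\Co,\Co)$. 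The inclusion $\ell^\infty\subsetneq\prod$ is strict, so the complexes are \emph{not} isomorphic term by term, and no bounded functional is compactly supported in general (take any bounded but non-finitely-supported scalar sequence). Your appeal to Frobenius reciprocity is also misplaced: $\Fu_\Co(K^\p,V;W)$ consists of $\Co$-linear, not $G_\p$-equivariant, maps out of $V$, so the ``determined by values on finitely many cosets plus a $K_\p$-map'' picture does not apply.

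What is actually true, and what the proof must establish, is that the inclusion of complexes becomes an isomorphism after applying $\HH^\bullet(G(F),-)$, and this is where the real content lies. The key observations are: (i) the bounded dual is exactly $\Hom_{\mathcal{O}_\Co}(\cind_{K_\p}^{G_\p}(L),\mathcal{O}_\Co)\otimes_{\mathcal{O}_\Co}\Co$, so the continuous $\Fu$-module is the integral $\Fu$-module tensored with $\Co$; (ii) $G(F)$ acts on $G(\A\pinfty)/K^\p\times G_\p/K_\p$ with \emph{finitely many} orbits (class number finiteness) and $S$-arithmetic stabilizers $\Gamma_x$; by Shapiro's lemma both cohomologies reduce to finite products of $\HH^\bullet(\Gamma_x,\Hom(L_\Co,W))$; and (iii) because the $\Gamma_x$ are of type $FP_\infty$, the functor $\HH^\bullet(\Gamma_x,-)$ commutes with $-\otimes_{\mathcal{O}_\Co}\Co$, which bridges the bounded and unbounded sides. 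Without (ii) and (iii) — in particular the finiteness of the class set and the finiteness properties of the arithmetic stabilizers — the argument does not close, so these ingredients cannot be omitted.
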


The proposition above combined with Theorem \ref{stflaw} implies the following:
\begin{Cor}\label{automatic2}
The canonical map
\begin{align*}\HH^{d}(G(F),\Fu^{\cont}_\Co(K^\p,\St_{G_\p}^{\cont}(\Co);\Co(\epsilon)))
&\too \HH^{d}(G(F),\Fu_\Co(K^\p,\St_{G_\p}^{\infty}(\Co);\Co(\epsilon)))
\end{align*}
is an isomorphism.
\end{Cor}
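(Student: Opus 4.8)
The plan is to deduce this directly from Proposition \ref{automatic} by specializing to $V=\St_{G_\p}^{\infty}(\Co)$. Thus the only things to verify are that $\St_{G_\p}^{\infty}(\Co)$ admits a flawless $G_\p$-stable $\mathcal{O}_\Co$-lattice and that the universal unitary completion that then appears in Proposition \ref{automatic} is precisely the continuous Steinberg representation $\St_{G_\p}^{\cont}(\Co)$.

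For the lattice one takes $M=\St_{G_\p}^{\infty}(\mathcal{O}_\Co)$. This is manifestly a $G_\p$-stable $\mathcal{O}_\Co$-lattice in $\St_{G_\p}^{\infty}(\Co)$, and it is flawless by Theorem \ref{stflaw}, which is stated for an arbitrary ring $R$ (the reduction to $R=\Z$ being the Borel--Serre resolution arising from the simplicial chain complex of the Bruhat--Tits building). In particular $M$ is finitely generated over $\mathcal{O}_\Co[G_\p]$, so by the discussion preceding Proposition \ref{automatic} the $M$-adic completion of $\St_{G_\p}^{\infty}(\Co)$ is its universal unitary completion $\St_{G_\p}^{\infty}(\Co)^{\univ}$. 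By the remark recorded in Section \ref{Extensions} that $\VV^{\cont}_{P_I}(\Co)$ is the universal unitary completion of $v^{\infty}_{P_I}(\Co)$, applied to $B=P_\emptyset$, we get $\St_{G_\p}^{\infty}(\Co)^{\univ}\cong\St_{G_\p}^{\cont}(\Co)$.

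Feeding $V=\St_{G_\p}^{\infty}(\Co)$ and $V^{\univ}=\St_{G_\p}^{\cont}(\Co)$ into Proposition \ref{automatic} yields, for every character $\epsilon\colon\pi_0(G_\infty)\to\{\pm1\}$, every compact open $K^\p\subseteq G(\A\pinfty)$ and every degree $d\geq 0$, exactly the asserted isomorphism. There is essentially no obstacle here: the corollary is a formal consequence of Proposition \ref{automatic} together with Theorem \ref{stflaw}, and the only point worth spelling out is the identification of $V^{\univ}$ with $\St_{G_\p}^{\cont}(\Co)$, which is the universal-unitary-completion remark recalled in Section \ref{Extensions}.
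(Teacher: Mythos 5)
Your proof is correct and takes exactly the same route as the paper, which states the corollary as an immediate consequence of Proposition \ref{automatic} combined with Theorem \ref{stflaw}; you have merely spelled out the implicit details (the choice of lattice $\St_{G_\p}^{\infty}(\mathcal{O}_\Co)$ and the identification of $V^{\univ}$ with $\St_{G_\p}^{\cont}(\Co)$ via the universal-unitary-completion remark in Section \ref{Extensions}).
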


For a compact, open subset $K_\p\subseteq G_\p$ and a character $\epsilon\colon \pi_0(G_\infty) \to \left\{\pm 1\right\}$ we put
$$\HH^{d}(X_{K^\p\times K_\p},W)^{\epsilon}=\HH^{d}(G(F),\Fu_\Co(K^\p,\Co[G_\p/K_\p];W(\epsilon))).$$
If the level $K^\p\times K_\p$ is neat, this group is naturally isomorphic to (the epsilon component of) the singular cohomology with coefficients in $W$ of the locally symmetric space of level $K^\p\times K_\p$ associated with $G$.
More generally, let $V$ be a topological $A$-module, which is locally convex as a $\Q_p$-vector space, with a continuous $A$-linear $K_\p$-action.
We consider $\cind_{K_\p}^{G_\p}V$ with the locally convex inductive limit topology.
This is a continuous $A$-module with a continuous $G_p$-action.
We put
$$\HH^{d}(X_{K^\p\times K_\p},\Hom_{A}^{\cont}(V,W))^{\epsilon}=\HH^{d}(G(F),\Fu_{A}^{\cont}(K^\p,\cind_{K_\p}^{G_\p}V;W(\epsilon))).$$
Again, this space can be identified with the cohomology of the corresponding locally symmetric space with values in the sheaf associated with $\Hom_{A}^{\cont}(V,A)$.

\subsection{Automorphic $\LI$-invariants}\label{Definition} 
For the remainder of the article we fix a finite extension $\Co$ of $\Q_p$ that contains $\Q_\pi$ and a compact, open subgroup $K^\p=\prod_{v\nmid \p \infty}K_v\subseteq G(\A\pinfty)$ such that $(\pi\pinfty)^{K^\p}\neq 0$.
We may assume that $K_v$ is hyperspecial for every finite place $v$ such that $\pi_v$ is spherical.

Since $v_{P_I}^{\infty}(\Co)=v_I^{\infty}(\Z)\otimes \Co$ we have a canonical isomorphism
$$\Fu_\Z(K^\p,v^{\infty}_{P_I}(\Z);W)\cong \Fu_{\Co}(K^\p,v^{\infty}_{P_I}(\Co);W)$$
for any $\Co$-vector space $W$.
Hence, we abbreviate this space by $\Fu(K^\p,v^{\infty}_{P_I};W)$ (and similarly for $\St_{G_\p}^{\infty}$ in place of $v^{\infty}_{P_I}$).

Let $I_\p\subseteq G_\p$ be an Iwahori subgroup.
By Frobenius reciprocity the choice of an Iwahori-fixed vector yields a $G_\p$-equivariant homomorphism
\begin{align}\label{evaluationzero}
\cind_{I_\p}^{G_\p}\Co\too \St_{G_\p}^{\infty},
\end{align}
which in turn induces a Hecke-equivariant map
\begin{align}\label{evaluation}
\ev^{(d)}\colon \HH^{d}(G(F),\Fu(K^\p,\St_{G_\p}^{\infty};\Co(\epsilon)))\too \HH^{d}(X_{K^\p\times I_\p},\Co)^{\epsilon}.
\end{align}

Let
$$\T=\T(K^\p\times I_\p)_\Co=C_c(K^\p\times I_\p\backslash G(\A^\infty)/K^\p\times I_\p,\Co)$$
be the $\Co$-valued Hecke algebra of level $K^\p\times I_\p$.
By abuse of notation we denote the model of $\pi^\infty$ over $\Co$ also by $\pi^\infty$.
If $V$ is a $\mathbb{\T}$-module, we put
$$V[\pi]=\sum_f \im(f)$$
where we sum over all $\mathbb{T}$-homomorphisms $f\colon(\pi^\infty)^{K^\p\times I_\p}\to V.$
Similarly, we define
$$\T^{\p}=\T(K^\p)_\Co=C_c(K^\p\backslash G(\A\pinfty)/K^\p,\Co)$$
to be the Hecke algebra away from $\p$ and, given a $\T^{\p}$-module $V$, we put
$$V[\pi^{\p}]=\sum_f \im(f)$$
where we sum over all $\T^{\p}$-homomorphisms $f\colon(\pi\pinfty)^{K^\p}\to V.$

For the proof of the following proposition that crucially relies on the hypothesis (SMO) we refer to \cite{Ge4}, Proposition 3.6.
\begin{Pro}\label{componentpro}
The map $\ev^{(d)}$ induces an isomorphism
$$\HH^{d}(G(F),\Fu(K^\p,\St_{G_\p}^{\infty};\Co(\epsilon)))[\pi^{\p}]\xrightarrow{\cong} \HH^{d}(X_{K^\p\times I_\p},\Co)^{\epsilon}[\pi].$$
on isotypic components.
There exists an integer $m_\pi\geq 0$ such that
$$\dim_\Co \HH^{d}(X_{K^\p\times I_\p},\Co)^{\epsilon}[\pi]= m_\pi\cdot \dim(\pi^\infty)^{K^\p\times I_\p} \cdot\binom{\delta}{d-q}\ \forall d\geq 0$$
for each sign character $\epsilon.$
\end{Pro}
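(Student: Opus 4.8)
The plan is to realize $\ev^{(d)}$ as an edge morphism of a hypercohomology spectral sequence attached to a resolution of the Steinberg representation, and then to use hypothesis (SMO) to force that spectral sequence to degenerate on $\pi^\p$-isotypic components. The first step is to establish, using the Bruhat--Tits building of $G_\p$ (compare the proof of Theorem \ref{stflaw}), a finite exact complex of smooth $\Co[G_\p]$-representations
\begin{align*}
0\too C_r\too\cdots\too C_1\too C_0=\cind_{I_\p}^{G_\p}\Co\too\St_{G_\p}^{\infty}(\Co)\too 0,
\end{align*}
in which $r$ is the rank of $G_\p$, the augmentation $C_0\to\St_{G_\p}^{\infty}(\Co)$ is precisely the map \eqref{evaluationzero}, and each $C_s$ with $s\geq 1$ is a finite direct sum of modules $\cind_{H}^{G_\p}\Co$ with $H\subseteq G_\p$ a compact open subgroup satisfying $I_\p\subseteq H$ and $(\St_{G_\p}^{\infty}(\Co))^{H}=0$ (the $H$ being stabilizers of proper faces of the fundamental alcove, hence parahorics strictly larger than $I_\p$, on which the Steinberg line transforms by a nontrivial sign).

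Because $\Co$ is a field, the functor $\Fu_\Co(K^\p,-;\Co(\epsilon))\cong\Hom_\Co\bigl(\Co[G(\A\pinfty)/K^\p]\otimes_\Co-,\Co(\epsilon)\bigr)$ is exact, so applying it to the resolution above gives a cochain complex quasi-isomorphic to $\Fu(K^\p,\St_{G_\p}^{\infty};\Co(\epsilon))$ placed in degree $0$. Taking $G(F)$-equivariant cohomology then yields a first-quadrant spectral sequence
\begin{align*}
E_1^{s,t}=\HH^{t}\bigl(G(F),\Fu_\Co(K^\p,C_s;\Co(\epsilon))\bigr)\Longrightarrow\HH^{s+t}\bigl(G(F),\Fu(K^\p,\St_{G_\p}^{\infty};\Co(\epsilon))\bigr)
\end{align*}
with $E_1^{0,t}=\HH^{t}(X_{K^\p\times I_\p},\Co)^{\epsilon}$ and, for $s\geq 1$, $E_1^{s,t}=\bigoplus_{H}\HH^{t}(X_{K^\p\times H},\Co)^{\epsilon}$; moreover the composite of the edge surjection $\HH^{d}\bigl(G(F),\Fu(K^\p,\St_{G_\p}^{\infty};\Co(\epsilon))\bigr)\onto E_\infty^{0,d}$ with the inclusion $E_\infty^{0,d}\into E_1^{0,d}$ is exactly $\ev^{(d)}$.

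Passing to $\pi^\p$-isotypic components is exact on the (semisimple, admissible) $\T^\p$-modules in play, so it acts termwise on the spectral sequence, and it suffices to prove $E_1^{s,t}[\pi^\p]=0$ for all $s\geq 1$ and $t\geq 0$. Any automorphic representation $\pi'$ contributing to $\HH^{t}(X_{K^\p\times H},\Co)^{\epsilon}[\pi^\p]$ satisfies $\pi_v'\cong\pi_v$ for all finite $v\neq\p$ (the prime-to-$\p$ Hecke eigensystem determines the local components), has a nonzero $H$-fixed, hence a nonzero $I_\p$-fixed, vector at $\p$, and has $\pi_\infty'$ of non-vanishing $(\mathfrak{g},K_\infty^{\circ})$-cohomology; hypothesis (SMO) then forces $\pi_\p'\cong\pi_\p=\St_{G_\p}^{\infty}(\C)$, contradicting $(\St_{G_\p}^{\infty})^{H}=0$. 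Hence, after passing to $\pi^\p$-isotypic parts, the spectral sequence collapses onto its $s=0$ column, so $\ev^{(d)}$ induces an isomorphism $\HH^{d}(G(F),\Fu(K^\p,\St_{G_\p}^{\infty};\Co(\epsilon)))[\pi^\p]\xrightarrow{\cong}\HH^{d}(X_{K^\p\times I_\p},\Co)^{\epsilon}[\pi^\p]$. The same use of (SMO) shows that every $\pi'$ contributing to the target is cuspidal with $\pi^{\prime,\infty}\cong\pi^\infty$ and $\pi_\p'\cong\St_{G_\p}^{\infty}(\C)$; as $(\St_{G_\p}^{\infty})^{I_\p}$ is one-dimensional with a prescribed action of the Iwahori--Hecke algebra, the full Hecke algebra $\T$ acts on $(\pi_\p')^{I_\p}$ exactly as on $(\pi_\p)^{I_\p}$, whence $\HH^{d}(X_{K^\p\times I_\p},\Co)^{\epsilon}[\pi^\p]=\HH^{d}(X_{K^\p\times I_\p},\Co)^{\epsilon}[\pi]$ and the claimed isomorphism follows. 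For the dimension, one decomposes this cuspidal cohomology in the style of Borel--Wallach as $\bigoplus_{\pi'}m(\pi')\,\bigl[(\pi^{\prime,\infty})^{K^\p\times I_\p}\otimes\HH^{d}(\mathfrak{g},K_\infty^{\circ};\pi_\infty')\bigr]^{\epsilon}$, the sum running over cuspidal $\pi'$ with $\pi^{\prime,\infty}\cong\pi^\infty$ and $\pi_\infty'$ tempered (again by (SMO)) and cohomological for the trivial coefficient system; for such $\pi_\infty'$ one has $\HH^{\bullet}(\mathfrak{g},K_\infty^{\circ};\pi_\infty')\cong\Lambda^{\bullet-q}_{\C}\C^{\delta}$, while $\dim_\Co(\pi^{\prime,\infty})^{K^\p\times I_\p}=\dim_\Co(\pi^\infty)^{K^\p\times I_\p}$; grouping the $\pi_\infty'$ into $\pi_0(G_\infty)$-twist orbits and summing gives the stated formula, with $m_\pi$ the resulting (twist-orbit-weighted) multiplicity, manifestly independent of the sign character $\epsilon$.

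I expect the real work to lie in two places. First, one must produce the building resolution in exactly the Iwahori-adapted shape used above: the naive finitely-supported simplicial cochain complex of the building has a degree-zero term modelled on the stabilizer of an \emph{oriented} alcove, which for adjoint $G$ can be strictly larger than $I_\p$, so that complex must either be modified or one verifies directly that the resulting $E_1^{0,\bullet}$ agrees with Iwahori-level cohomology on $\pi$-isotypic parts. Second, and more seriously, one must check that (SMO) --- whose conclusion mixes assertions about $\pi$ and about $\pi'$ --- truly annihilates every contribution at a parahoric level strictly above $I_\p$, including residual and properly Eisenstein ones, and constrains $\pi_\infty'$ tightly enough that the $\pi_0(G_\infty)$-isotypic pieces of $\HH^{\bullet}(\mathfrak{g},K_\infty^{\circ};\pi_\infty')$ recombine, across the relevant cohomological tempered representations at infinity, into a contribution of the uniform shape $m_\pi\binom{\delta}{d-q}$ that is independent of $\epsilon$.
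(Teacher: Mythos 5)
Your strategy --- resolving $\St_{G_\p}^{\infty}$ by the compactly supported chain complex of the Bruhat--Tits building, pushing through the exact functor $\Fu_\Co(K^\p,-;\Co(\epsilon))$ and $G(F)$-cohomology to a first-quadrant spectral sequence, using (SMO) together with $(\St_{G_\p}^{\infty})^{H}=0$ for parahorics $H\supsetneq I_\p$ to annihilate the $s\geq 1$ columns on the isotypic part, and then reading the dimension formula off $(\mathfrak{g},K_\infty^{\circ})$-cohomology of tempered cohomological representations --- is the same route the paper defers to in \cite{Ge4}, Proposition 3.6. The two caveats you flag are real but resolvable. On the first: for adjoint $G$ the degree-zero term of the building resolution is $\cind_{\Omega_\p}^{G_\p}(\mathrm{or})$ with $\Omega_\p$ the full (setwise) alcove stabilizer and $\mathrm{or}$ the orientation character, and $\Omega_\p/I_\p$ can be nontrivial; the fix is that the Iwahori-fixed line of $\St_{G_\p}^{\infty}$ transforms under $\Omega_\p$ exactly by $\mathrm{or}$, so \eqref{evaluationzero} factors through $\cind_{\Omega_\p}^{G_\p}(\mathrm{or})$, and (SMO) forces every contributor to $\HH^{d}(X_{K^\p\times I_\p},\Co)^{\epsilon}[\pi]$ to be Steinberg at $\p$ and hence to live in the $\mathrm{or}$-isotypic summand --- so $E_1^{0,\bullet}$ and Iwahori-level cohomology agree after restricting to the $\pi$-part. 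On the second: after (SMO) pins the archimedean component to a single $\pi_0(G_\infty)$-orbit of a fixed tempered cohomological $\pi_\infty'$ with $\HH^{\bullet}(\mathfrak{g},K_\infty^{\circ};\pi_\infty')\cong\Lambda^{\bullet-q}\C^{\delta}$, the $\epsilon$-isotypic piece summed over the orbit has dimension independent of $\epsilon$, and this together with the automorphic multiplicity is what $m_\pi$ packages. One cleanliness point: rather than appealing to exactness of the $[\pi^\p]$-isotypic-component functor (which implicitly assumes a semisimplicity you have not argued), it is safer to localize the entire spectral sequence at $\m_{\pi^\p}\subset\T^{\p}$ --- localization is exact --- show the $s\geq 1$ columns vanish after localization, and only at the end pass from the $\m_{\pi^\p}$-local part to the isotypic component.
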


By Corollary \ref{automatic2} we have a canonical isomorphism
$$\HH^{d}(G(F),\Fu(K^\p,\St_{G_\p}^{\infty};\Co(\epsilon))) \cong \HH^{d}(G(F),\Fu_\Co^{\cont}(K^\p,\St_{G_\p}^{\cont};\Co(\epsilon))).$$
Composing with the homomorphism coming from dualizing the continuous inclusion $\St_{G_\p}^{\an}\into\St_{G_\p}^{\cont}$ yields the map
$$
\HH^{d}(G(F),\Fu(K^\p,\St_{G_\p}^{\infty};\Co(\epsilon)))\too \HH^{d}(G(F),\Fu_\Co^{\cont}(K^\p,\St_{G_\p}^{\an};\Co(\epsilon)))
$$
in cohomology.
Thus, for every $i\in\Delta$ we get a well-defined cup-product pairing
\begin{align*}
&\HH^{d}(G(F),\Fu(K^\p,\St_{G_\p}^{\infty};\Co(\epsilon))) \times \Ext^{1}_{\an}(v_{P_i}^{\infty}(\Co),\St_{G_\p}^{\an}(\Co))\\
\too &\HH^{d+1}(G(F),\Fu(K^\p,v^{\infty}_{P_i};\Co(\epsilon)))
\end{align*}
which commutes with the action of the Hecke algebra $\T^{\p}$.
By Theorem \ref{Ding} we have a canonical isomorphism $\Hom^{\cont}(F_\p^{\ast},\Co)\cong\Ext^{1}_{\an}(v^{\infty}_{P_i}(\Co),\St_{G_\p}^{\an}(\Co))$.
Hence, taking cup product with the extension $\mathcal{E}^{\an}_{i}(\lambda\circ i)$ associated with a homomorphism $\lambda\in \Hom^{\cont}(F_\p^{\ast},\Co)$ in \eqref{class3} yields a map
$$c^{(d)}_{i}(\lambda)^{\epsilon}\colon \HH^{d}(G(F),\Fu(K^\p,\St_{G_\p}^{\infty};\Co(\epsilon)))[\pi^{\p}]\too \HH^{d+1}(G(F),\Fu(K^\p,v^{\infty}_{P_i};\Co(\epsilon)))[\pi^{\p}]$$
on $\pi\pinfty$-isotypic parts.

\begin{Def}\label{deficom}
Given a character $\epsilon\colon \pi_0(G_\infty) \to \left\{\pm 1\right\}$, an integer $d\in \Z$ with $0\leq d\leq \delta$ and a root $i\in\Delta$ we define
$$\LI_{i}^{(d)}(\pi,\p)^{\epsilon}\subseteq \Hom^{\cont}(F_\p^{\ast},\Co)$$
as the kernel of the map $\lambda \mapsto c^{(q+d)}_{i}(\lambda)^{\epsilon}$.
\end{Def}
Alternatively, we can consider cup products with extensions associated with homomorphisms $\lambda\colon B \to \Co$ and define the $\LI$-invariant as a subspace of
$$\Hom^{\cont}(B,\Co)/\Hom^{\cont}(P_i,\Co).$$
This subspace is mapped to the $\LI$-invariant defined above via the map \eqref{coroot} induced by the coroot $i^{\vee}$ associated with $i$.

\begin{Pro}\label{codimension}
For all sign characters $\epsilon$, every degree $d\in [0,\delta]\cap \Z$ and every root $i \in \Delta$ the $\LI$-invariant
$$\LI_{i}^{(d)}(\pi,\p)^{\epsilon}\subseteq \Hom^{\cont}(F_\p^{\ast},\Co)$$
is a subspace of codimension at least one, which does not contain the space of smooth homomorphisms.

Suppose $m_\pi=1$.
Then, in the extremal cases $d=0$ and $d=\delta$ the codimension is exactly one.
\end{Pro}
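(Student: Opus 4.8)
The plan is to realise $\LI_{i}^{(d)}(\pi,\p)^{\epsilon}$ as the kernel of a single $\Co$-linear map and to reduce everything to one non-vanishing statement. By Definition~\ref{deficom} the space $\LI_{i}^{(d)}(\pi,\p)^{\epsilon}$ is the kernel of
\[
\Phi\colon \Hom^{\cont}(F_\p^{\ast},\Co)\too \Hom_{\T^{\p}}(A,B),\qquad \lambda\mapstoo c^{(q+d)}_{i}(\lambda)^{\epsilon},
\]
where $A=\HH^{q+d}(G(F),\Fu(K^{\p},\St_{G_\p}^{\infty};\Co(\epsilon)))[\pi^{\p}]$ and $B=\HH^{q+d+1}(G(F),\Fu(K^{\p},v^{\infty}_{P_i};\Co(\epsilon)))[\pi^{\p}]$, so that $\mathrm{codim}\,\LI_{i}^{(d)}(\pi,\p)^{\epsilon}=\dim_\Co\im\Phi$. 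The space of smooth (i.e.\ locally constant) homomorphisms is the line $\Co\cdot\ord_\p$, since any locally constant homomorphism into the torsion-free topological group $(\Co,+)$ kills the compact group $\mathcal{O}_\p^{\ast}$. Hence the first two assertions of the proposition both follow from the single claim $\Phi(\ord_\p)\neq 0$, which I would prove in the sharp form that $\Phi(\ord_\p)$ is an \emph{isomorphism} $A\xrightarrow{\ \sim\ }B$.

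To prove this I would exhibit $\Phi(\ord_\p)$ as a connecting homomorphism. Because $i$ is a simple root, $\langle i,i^{\vee}\rangle=2\neq 0$, so the smooth character $\ord_\p\circ i$ of $B$ does not extend to $P_i$ and Theorem~\ref{Ding}(i) shows $\mathcal{E}^{\an}_{i}(\ord_\p\circ i)\neq 0$. As $\ord_\p\circ i$ is smooth, the representation $\tau_{\ord_\p\circ i}$ is smooth, and the whole construction of $\mathcal{E}^{\an}_{i}(\ord_\p\circ i)$ — exact parabolic induction, pullback along $i^{\infty}_{P_i}(\Co)\to\II^{\an}_{B}(\Co)$, pushout along $\II^{\an}_{B}(\Co)\to\St^{\an}_{G_\p}(\Co)$ — can be carried out inside smooth representations; together with Theorem~\ref{Ding}(ii) this exhibits $\mathcal{E}^{\an}_{i}(\ord_\p\circ i)$ as the image, under the natural map from smooth extensions, of a nonzero class $\varepsilon_i\in\Ext^{1}_{G_\p}(v^{\infty}_{P_i}(\Co),\St^{\infty}_{G_\p}(\Co))$, which I represent by a short exact sequence of smooth $G_\p$-representations $0\to\St^{\infty}_{G_\p}(\Co)\to\mathcal F\to v^{\infty}_{P_i}(\Co)\to 0$. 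All three terms are flawless (Theorem~\ref{stflaw} and its analogue for $v^{\infty}_{P_i}$, the latter already needed to set up the cup product in \cite{Ge4}; for $\mathcal F$ use the horseshoe lemma), so Corollary~\ref{automatic2} and its analogues apply, and by naturality of cup products and their compatibility with the Yoneda connecting homomorphism, $\Phi(\ord_\p)$ is identified up to sign with the connecting map of the long exact sequence obtained by applying $\HH^{\bullet}(G(F),\Fu(K^{\p},-;\Co(\epsilon)))$ to this short exact sequence and then passing to $\pi^{\p}$-isotypic parts — both operations exact, the first because $\Fu(K^{\p},-;W)$ is exact over the field $\Co$, the second because $\pi^{\p}$-isotypic projection splits off a direct summand.

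It then suffices to prove $\HH^{\bullet}(G(F),\Fu(K^{\p},\mathcal F;\Co(\epsilon)))[\pi^{\p}]=0$, which makes the connecting map an isomorphism in every degree. This is a local--global computation of exactly the type behind Proposition~\ref{componentpro}: by the hypothesis~(SMO), the only automorphic representation contributing to the $\pi^{\p}$-isotypic part of the cohomology of any $\Fu(K^{\p},V;\Co(\epsilon))$ with $V$ admitting nonzero parahoric-invariants is $\pi$ itself, with $\pi_\p=\St^{\infty}_{G_\p}$, and by temperedness of $\pi_\infty$ the cohomology of the relevant locally symmetric spaces is concentrated in degrees $[q,q+\delta]$. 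Running a flawless resolution of $\mathcal F$ — which one may take to be the direct sum of those for $\St^{\infty}_{G_\p}(\Co)$ and $v^{\infty}_{P_i}(\Co)$ with differential twisted by $\varepsilon_i$ — through $\Fu(K^{\p},-;\Co(\epsilon))$ produces a spectral sequence whose first differential encodes cup product with $\varepsilon_i$ on the surviving (one-dimensional, suitably twisted) spaces of $\St^{\infty}_{G_\p}$-invariants; since $\varepsilon_i\neq 0$ this differential is an isomorphism and the spectral sequence collapses to zero. \textbf{This is the step I expect to be the main obstacle}: it requires keeping precise track of which parahoric subgroups and which orientation characters occur in the flawless resolutions and of how the twisting differential acts, exactly as in the proof of Proposition~\ref{componentpro} in \cite{Ge4}. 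Granting it, $\Phi(\ord_\p)\colon A\xrightarrow{\sim}B$, which settles the first two assertions.

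Finally, assume $m_\pi=1$ and $d\in\{0,\delta\}$, so that $\binom{\delta}{d}=1$. Then Proposition~\ref{componentpro}, via the Hecke-equivariant isomorphism $\ev^{(q+d)}$, shows that $A$ is a single copy of the $\T^{\p}$-module $(\pi\pinfty)^{K^{\p}}$, which is simple (being a tensor product of the simple local Hecke modules $(\pi_v)^{K_v}$, $v\nmid\p\infty$), and via the $\T^{\p}$-equivariant isomorphism $\Phi(\ord_\p)\colon A\xrightarrow{\sim}B$ the same is true of $B$. Since $\Co\supseteq\Q_\pi$ contains a field of definition of $\pi^{\infty}$, Schur's lemma gives $\Hom_{\T^{\p}}(A,B)=\End_{\T^{\p}}((\pi\pinfty)^{K^{\p}})=\Co$; as $\im\Phi$ is a nonzero subspace of this line it equals the whole line, whence $\mathrm{codim}\,\LI_{i}^{(d)}(\pi,\p)^{\epsilon}=1$. (For $0<d<\delta$ the module $A$ contains $\binom{\delta}{d}>1$ copies of $(\pi\pinfty)^{K^{\p}}$, so $\Hom_{\T^{\p}}(A,B)$ is no longer a line and this argument does not give the sharp bound; that is precisely what the $p$-adic family input of the later sections supplies.)
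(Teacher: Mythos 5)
Your skeleton matches the proof the paper is relying on: the paper's own ``proof'' of Proposition \ref{codimension} is just the citation of \cite{Ge4}, Proposition 3.14, and the two pillars there are exactly yours --- the smooth homomorphisms form the line $\Co\cdot\ord_\p$, cup product with the class attached to $\ord_\p$ (equivalently, with the unique non-split smooth extension $\varepsilon_i$ of $v^{\infty}_{P_i}(\Co)$ by $\St^{\infty}_{G_\p}(\Co)$) is an isomorphism on $\pi^{\p}$-isotypic parts, and a Schur/multiplicity-one count gives codimension exactly one in the extremal degrees when $m_\pi=1$. Your closing paragraph and your identification of $\Phi(\ord_\p)$ with the connecting homomorphism of the smooth extension are fine in outline, though three points deserve a sentence each: the compatibility of the smooth class with the locally analytic class $\mathcal{E}^{\an}_{i}(\ord_\p\circ i)$ actually used in Definition \ref{deficom}; the exactness of passing to $[\pi^{\p}]$-parts (which is not formal from the definition as a sum of images); and the fact that $(\pi\pinfty)^{K^\p}$ has endomorphism algebra $\Co$. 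Flawlessness of $v^{\infty}_{P_i}$ and of $\mathcal F$ is neither established in the paper (only Theorem \ref{stflaw} for the full Steinberg) nor actually needed, since for a short exact sequence of smooth representations over the field $\Co$ no automatic-continuity input is required.

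The genuine gap is the step you flagged yourself. The inference ``since $\varepsilon_i\neq 0$ this differential is an isomorphism'' is not valid: non-vanishing of an extension class does not imply that cup product with it is injective, surjective, or even non-zero on the relevant isotypic part. The paper itself illustrates this in the proof of Lemma \ref{injectivity}: cup product with a smooth extension $\mathcal{E}_{J,I}$ of $v^{\infty}_{P_J}$ by $v^{\infty}_{P_I}$ (in general non-split) is shown to be the \emph{zero} map on $[\pi^{\p}]$-parts, while cup product with the non-split extension in the opposite direction, $\mathcal{E}_{I,J}$, is an isomorphism. Distinguishing these two directions is precisely the content of \cite{Ge4}, Corollary 3.8 (quoted verbatim in that proof), whose argument rests on Orlik's computation of smooth Ext-groups between generalized Steinberg representations \cite{Orlik} combined with the local-global analysis behind Proposition \ref{componentpro} (SMO, temperedness, the degree count), not on mere non-vanishing of the class. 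So the central non-degeneracy claim in your proposal is unsupported as written; if you replace your spectral-sequence sketch by an appeal to \cite{Ge4}, Corollary 3.8 (or to Proposition 3.14 of \cite{Ge4} directly, as the paper does), the remainder of your argument goes through and reproduces the intended proof.
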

\begin{proof}
This is Proposition 3.14 of \cite{Ge4}.
\end{proof}

\begin{Rem}
Let $\log_p\colon \Co^{\ast}\to \Co$ denote the branch of the $p$-adic logarithm such that $\log_p(p)=0.$
Let us assume that $\Co$ contains the images of all embeddings $\sigma\colon F_\p\to \overline{\Q_p}.$
We put $\log_{p,\sigma}=\log_p \circ \sigma\colon F_\p^{\ast}\to \Co.$
The set
$$\left\{\ord_\p\right\} \cup \left\{\log_{p,\sigma}\mid \sigma\colon F_\p \to \Co \right\}$$
is a basis of $\Hom^{\cont}(F_\p^{\ast},\Co).$
Suppose $\LI\subseteq \Hom^{\cont}(F_\p^{\ast},\Co)$ is a subspace of codimension one that does not contain $\ord_\p$.
Then for each embedding $\sigma$ there exists a unique element $\LI^{\sigma}\in \Co$ such that $\log_{p,\sigma}- \LI^{\sigma}\ord_\p \in \LI$ and these elements clearly form a basis of $\LI$. 
\end{Rem}

\section{L-invariants and big principal series}
By the Colmez--Greenberg--Stevens formula (see \cite{Ding}, Theorem 3.4) one can calculate Fontaine--Mazur $\LI$-invariants of certain local Galois representations by deforming them (or rather the attached $(\varphi,\Gamma)$-modules) in rigid analytic families.
The goal of this section is to prove an automorphic version of that formula.

This implies that $\LI$-invariants can be related to the existence of cohomology classes with values in (duals of) families of principal series.
\subsection{Linear algebra over the dual numbers}\label{Dual numbers}
Let $R$ be a ring and $R[\varepsilon]=R[X]/X^2$ the ring of dual numbers over $R$.
Let $M$ be an $R[\varepsilon]$-module.
We define several maps on dual spaces associated with $M$.
First, multiplication with $\varepsilon$ induces the map $\mu\colon M/\varepsilon M\to \varepsilon M\hookrightarrow M$.
We denote by
$$\mu_\varepsilon^{\ast}\colon \Hom_R(M,R)\too \Hom_R(M/\varepsilon M,R)$$
its $R$-dual.
Second, reducing modulo $\varepsilon$ yields the map
$$\red\colon \Hom_{R[\varepsilon]}(M,R[\varepsilon])\too \Hom_{R}(M/\varepsilon M,R).$$
At last, the map $\add\colon R[\varepsilon]\to R, a+b\varepsilon\mapsto a+b$ induces the map
$$\add_\ast\colon\Hom_{R[\varepsilon]}(M,R[\varepsilon])\too \Hom_{R}(M,R).$$

The following easy computation is left to the reader.
\begin{Lem}\label{duallemma}
The maps $\mu_\varepsilon^{\ast}$, $\red$ and $\add$ are functorial in $M$ and the following equality holds: $\red=\mu_\varepsilon^{\ast}\circ\add_\ast.$
\end{Lem}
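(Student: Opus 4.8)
\textbf{Proof plan for Lemma \ref{duallemma}.}

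The plan is to unwind all four maps explicitly and verify both assertions by direct computation, since everything is formal linear algebra over $R[\varepsilon]$. First I would fix notation: for $\varphi \in \Hom_{R[\varepsilon]}(M, R[\varepsilon])$ write $\varphi(m) = \varphi_0(m) + \varphi_1(m)\varepsilon$ with $\varphi_0, \varphi_1 \colon M \to R$ additive maps; the $R[\varepsilon]$-linearity of $\varphi$ forces $\varphi_0(\varepsilon m) = 0$ and $\varphi_1(\varepsilon m) = \varphi_0(m)$ for all $m$, i.e. $\varphi_0$ factors through $M/\varepsilon M$ and $\varphi_1|_{\varepsilon M}$ is $\mu$-dual to $\varphi_0$. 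With this bookkeeping, $\red(\varphi)$ is the map $M/\varepsilon M \to R$ induced by $\varphi_0$, while $\add_\ast(\varphi)$ is $\varphi_0 + \varphi_1 \colon M \to R$.

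Functoriality in $M$ is immediate from the definitions: given an $R[\varepsilon]$-linear map $f \colon M \to M'$, all of $\mu$, reduction modulo $\varepsilon$, and the ring map $\add$ commute with $f$ in the evident way, so precomposition with $f$ intertwines $\mu_\varepsilon^\ast$, $\red$, and $\add_\ast$ for $M$ and $M'$; I would state this in one sentence rather than belabour the diagram chase. For the identity $\red = \mu_\varepsilon^\ast \circ \add_\ast$, take $\varphi \in \Hom_{R[\varepsilon]}(M, R[\varepsilon])$. Then $\add_\ast(\varphi) = \varphi_0 + \varphi_1 \in \Hom_R(M, R)$, and applying $\mu_\varepsilon^\ast$ means precomposing with $\mu \colon M/\varepsilon M \to M$, $\bar m \mapsto \varepsilon m$. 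So $(\mu_\varepsilon^\ast \circ \add_\ast)(\varphi)$ sends $\bar m$ to $\varphi_0(\varepsilon m) + \varphi_1(\varepsilon m) = 0 + \varphi_0(m)$, which is exactly $\red(\varphi)(\bar m)$. This closes the argument.

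There is essentially no obstacle here — the only thing requiring a moment's care is checking that $\mu$ is well-defined on $M/\varepsilon M$ (it is, because $\varepsilon \cdot \varepsilon m = 0$) and that the description of $R[\varepsilon]$-linear functionals via the pair $(\varphi_0, \varphi_1)$ with the constraint $\varphi_1(\varepsilon m) = \varphi_0(m)$ is correct; both are one-line verifications. Accordingly I would keep the write-up to a few lines, or indeed leave it to the reader as the statement already proposes, perhaps recording only the key identity $\varphi_1(\varepsilon m) = \varphi_0(m)$ that makes the composite $\mu_\varepsilon^\ast \circ \add_\ast$ collapse to $\red$.
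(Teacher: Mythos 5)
Your verification is correct and is precisely the ``easy computation'' the paper leaves to the reader: the constraint $\varphi_0(\varepsilon m)=0$, $\varphi_1(\varepsilon m)=\varphi_0(m)$ forced by $R[\varepsilon]$-linearity is exactly what makes $\mu_\varepsilon^{\ast}\circ\add_\ast$ collapse to $\red$, and the functoriality statements are the evident compatibilities with precomposition. One tiny quibble: $\add$ is only an $R$-linear map, not a ring homomorphism, so avoid calling it ``the ring map'' --- though nothing in your argument uses multiplicativity, so the proof stands as written.
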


\subsection{Infinitesimal deformations of principal series}\label{Principal}
Let $T\subseteq B\subset G_{F_\p}$ be the maximal torus respectively the Borel subgroup chosen in Section \ref{Extensions}.
The map
$$\tau\colon\Hom^{\cont}(B,\Co)\too\Hom^{\cont}(B,\Co[\varepsilon]^{\ast}),\ \lambda \mapstoo \left[x\mapsto 1+\lambda(x)\epsilon\right]$$
defines an injective group homomorphism.
Its image is the set of all continuous characters $\chi\colon B\to \Co[\varepsilon]^{\ast}$ such that $\chi \equiv 1 \bmod \varepsilon$.
The underlying $\Co$-representation of $\tau(\lambda)$ is the two-dimensional representation $\tau_\lambda$ defined in \eqref{tau}.
Thus, we can view $\II_{B}^{\an}(\tau_\lambda)=\II_{B}^{\an}(\tau(\lambda))$ as an $\Co[\varepsilon]$-representation of $G_\p$.
Reducing modulo $\varepsilon$ induces the map
$$\red_\lambda^{d,\epsilon}\colon \HH^{d}(G(F),\Fu^{\cont}_{\Co[\varepsilon]}(K^\p,\II_{B}^{\an}(\tau_\lambda);\Co[\varepsilon](\epsilon)))\too\HH^{d}(G(F),\Fu^{\cont}_\Co(K^\p,\II_{B}^{\an}(\Co);\Co(\epsilon)))$$
in cohomology. 

Let $i^\vee$ be the coroot associated with $i$.
Given a character $\lambda\colon B\to \Co$ we put $\lambda_i=\lambda \circ i^{\vee}$.

\begin{Lem}\label{difflemma}
Let $\lambda\colon B\to \Co$ be a continuous character.
If the isotypic component $\HH^{q+d}(G(F),\Fu^{\cont}_\Co(K^\p,\II_{B}^{\an}(\Co);\Co(\epsilon)))[\pi^{\p}]$ is contained in the image of $\red_\lambda^{q+d,\epsilon}$, then the homomorphism $\lambda_i$ belongs to $\LI_{i}^{(d)}(\pi,\p)^{\epsilon}.$
\end{Lem}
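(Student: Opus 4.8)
The plan is to unwind the definition of $\LI_i^{(d)}(\pi,\p)^\epsilon$ (Definition \ref{deficom}) and identify the cup-product map $c^{(q+d)}_i(\lambda_i)^\epsilon$ with a connecting homomorphism arising from the short exact sequence of $\Co[\varepsilon]$-representations
$$0\too \II_{B}^{\an}(\Co)\too \II_{B}^{\an}(\tau_\lambda)\too \II_{B}^{\an}(\Co)\too 0,$$
suitably pushed to the Steinberg quotient and pulled back to $i^{\infty}_{P_i}(\Co)$. The observation that makes this work is exactly Lemma \ref{duallemma}: cup product with an extension class is, on the level of cohomology, the composite of the Bockstein-type connecting map with the dualized multiplication-by-$\varepsilon$ map. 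So first I would apply $\Fu^{\cont}_{\Co[\varepsilon]}(K^\p,-;\Co[\varepsilon](\epsilon))$ to the above sequence, take the associated long exact sequence in $\HH^\bullet(G(F),-)$, and compose the connecting homomorphism with the maps to $v^\infty_{P_i}$-valued cohomology coming from the Steinberg pushout and the $i^{\infty}_{P_i}$-pullback (using the exactness of parabolic induction for locally analytic extensions, already invoked in Section \ref{Extensions}). By Theorem \ref{Ding}(iii) this extension class is precisely $\mathcal{E}^{\an}_i(\lambda_i\circ i)$ up to the factor-of-two discrepancy between \eqref{root} and \eqref{coroot}, which is harmless for membership in a subspace.

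Next I would make the compatibility precise: the long exact sequence in cohomology attached to the short exact sequence of coefficient modules has connecting map $\delta\colon \HH^{q+d}(G(F),\Fu^{\cont}_\Co(K^\p,\II_{B}^{\an}(\Co);\Co(\epsilon)))\to \HH^{q+d+1}(G(F),\Fu^{\cont}_\Co(K^\p,\II_{B}^{\an}(\Co);\Co(\epsilon)))$, and the dual formulation means $c^{(q+d)}_i(\lambda_i)^\epsilon$ factors (on $\pi^\p$-isotypic parts, after composing with the maps induced by $\II_B^{\an}(\Co)\to\St_{G_\p}^{\an}$ and $i^\infty_{P_i}(\Co)\to\II_B^{\an}(\Co)$) through $\delta$. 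Standard homological algebra then gives that a class $x$ dies under $\delta$ if and only if $x$ lifts to a class over $\Co[\varepsilon]$, i.e.\ $x$ lies in the image of $\red^{q+d,\epsilon}_\lambda$. Therefore, if the entire isotypic component $\HH^{q+d}(G(F),\Fu^{\cont}_\Co(K^\p,\II_{B}^{\an}(\Co);\Co(\epsilon)))[\pi^\p]$ is in the image of $\red^{q+d,\epsilon}_\lambda$, then $c^{(q+d)}_i(\lambda_i)^\epsilon$ kills that whole component; comparing with Definition \ref{deficom}, which defines $\LI_i^{(d)}(\pi,\p)^\epsilon$ as the kernel of $\lambda\mapsto c^{(q+d)}_i(\lambda)^\epsilon$ on $\HH^{q+d}(G(F),\Fu(K^\p,\St_{G_\p}^{\infty};\Co(\epsilon)))[\pi^\p]$, and using the identification of that group with its continuous/locally analytic incarnation via Corollary \ref{automatic2}, we conclude $\lambda_i\in\LI_i^{(d)}(\pi,\p)^\epsilon$.

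The one genuinely delicate point — and the step I expect to be the main obstacle — is bookkeeping the passage between the three flavours of Steinberg-type coefficients ($i^\infty_{P_i}$, $\II_B^{\an}$, $\St_{G_\p}^{\an}$, and the smooth $v^\infty_{P_i}$ that appears in the target of $c^{(q+d)}_i$) and making sure the connecting map in the $\II_B^{\an}(\tau_\lambda)$-sequence is genuinely compatible, under all these functorial maps, with cup product against the \emph{pushed-and-pulled} class $\mathcal{E}^{\an}_i(\lambda_i\circ i)$ rather than the raw class $\widehat{\mathcal{E}}^{\an}(\lambda)$ of \eqref{class1}. This is where one uses the functoriality asserted in Lemma \ref{duallemma} together with naturality of connecting homomorphisms: pullback along $i^\infty_{P_i}(\Co)\to\II_B^{\an}(\Co)$ and pushout along $\II_B^{\an}(\Co)\to\St_{G_\p}^{\an}(\Co)$ commute with forming the long exact sequence, so the diagram of connecting maps one needs does commute; one just has to write it out carefully. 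The automatic-continuity input (Proposition \ref{automatic}, Corollary \ref{automatic2}) is needed only to know that working with $\Fu^{\cont}_\Co(K^\p,\St_{G_\p}^{\cont};\Co(\epsilon))$ — hence with the locally analytic subspace — computes the same cohomology as the smooth model appearing in Definition \ref{deficom}, so that the two kernels literally coincide.
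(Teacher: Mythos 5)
Your first half is essentially the paper's own argument: the hypothesis that the $\pi^{\p}$-isotypic component lies in the image of $\red_\lambda^{q+d,\epsilon}$, combined with Lemma \ref{duallemma} (the image of $\red$ is contained in the image of $\mu_\varepsilon^{\ast}$, which on cohomology is the map induced by the inclusion $\II_B^{\an}(\Co)\into\II_B^{\an}(\tau_\lambda)$), shows that this isotypic component is killed by cup product with $\widehat{\mathcal{E}}^{\an}(\lambda)$. One small inaccuracy: your ``a class dies under $\delta$ if and only if it lies in the image of $\red_\lambda^{q+d,\epsilon}$'' is wrong as an equivalence, since dying under $\delta$ only yields a lift to the cohomology of the $\Co$-linear dual of $\II_B^{\an}(\tau_\lambda)$, not to the $\Co[\varepsilon]$-linear dual; you only use the correct direction, so this is harmless.

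The genuine gap is at the last step. Naturality of connecting maps under the pullback along $i^{\infty}_{P_i}(\Co)\to\II_B^{\an}(\Co)$ and the pushout along $\II_B^{\an}(\Co)\to\St_{G_\p}^{\an}(\Co)$ only gives you the vanishing, on the isotypic component of the Steinberg cohomology, of the \emph{composite} of $c^{(q+d)}_{i}(\lambda)^{\epsilon}$ with the map
$\HH^{q+d+1}(G(F),\Fu_\Co(K^\p,v^{\infty}_{P_i}(\Co);\Co(\epsilon)))\to \HH^{q+d+1}(G(F),\Fu_\Co(K^\p,i^{\infty}_{P_i}(\Co);\Co(\epsilon)))$
induced by dualizing $i^{\infty}_{P_i}(\Co)\onto v^{\infty}_{P_i}(\Co)$. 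Membership in $\LI_{i}^{(d)}(\pi,\p)^{\epsilon}$ requires the vanishing of $c^{(q+d)}_{i}(\lambda)^{\epsilon}$ itself, whose target is the $v^{\infty}_{P_i}$-valued cohomology. Passing from the composite to the map itself is not ``bookkeeping plus naturality'': injectivity of the coefficient map does not give injectivity on cohomology, and the needed injectivity on $\pi^{\p}$-isotypic components is exactly Lemma \ref{injectivity}(i), a nontrivial input proved via the resolution of $v^{\infty}_{P_J}$ by induced representations, the cup-product isomorphisms of \cite{Ge4}, and Orlik's vanishing of smooth self-extensions of generalized Steinberg representations. Your proposal neither supplies nor cites this, so as written it only places $\lambda_i$ in the a priori larger kernel of the composite map, not in $\LI_{i}^{(d)}(\pi,\p)^{\epsilon}$.
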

\begin{proof}
The inclusion $\II_{B}^{\an}(\Co)\into\II_{B}^{\an}(\tau_\lambda)$ induces the map
$$\HH^{q+d}(G(F),\Fu^{\cont}_{\Co}(K^\p,\II_{B}^{\an}(\tau_\lambda);\Co(\epsilon)))\too\HH^{q+d}(G(F),\Fu^{\cont}_\Co(K^\p,\II_{B}^{\an}(\Co);\Co(\epsilon)))$$
in cohomology.
Its image is the kernel of the cup product with the extension class $\widehat{\mathcal{E}}^{\an}(\lambda)$ defined in \eqref{class1}.

Thus, by Lemma \ref{duallemma} our assumption implies that the $\pi$-isotypic component
$\HH^{q+d}(G(F),\Fu^{\cont}_\Co(K^\p,\II_{B}^{\an}(\Co);\Co(\epsilon)))[\pi^{\p}]$
is contained in the kernel of the cup product with $\widehat{\mathcal{E}}^{\an}(\lambda)$.
We have the following commutative diagram:
\begin{center}
\begin{tikzpicture}
    \path 	
		(7,3) 	node[name=E]{$\HH^{q+d+1}(G(F),\Fu^{\cont}_\Co(K^\p,\II_{B}^{\an}(\Co);\Co(\epsilon)))$}
		(0,3) 	node[name=2]{$\HH^{q+d}(G(F),\Fu^{\cont}_\Co(K^\p,\II_{B}^{\an}(\Co);\Co(\epsilon)))$}
		(0,1.5) 	node[name=A]{$\HH^{q+d}(G(F),\Fu^{\cont}_\Co(K^\p,\II_{B}^{\an}(\Co);\Co(\epsilon)))$}
		(7,1.5) 	node[name=B]{$\HH^{q+d+1}(G(F),\Fu_\Co(K^\p,i^{\infty}_{P_i}(\Co);\Co(\epsilon)))$}
		(0,0) 	node[name=C]{$\HH^{q+d}(G(F),\Fu^{\cont}_\Co(K^\p,\St_{G_\p}^{\cont}(\Co);\Co(\epsilon)))$}
		(7,0) 	node[name=D]{$\HH^{q+d+1}(G(F),\Fu_\Co(K^\p,v^{\infty}_{P_i}(\Co);\Co(\epsilon)))$};
    \draw[->] (C) -- (A) ;
		 \draw[->] (2) -- (A) node[midway, left]{$=$};
    \draw[->] (A) -- (B) node[midway, above]{$\cup \widetilde{\mathcal{E}}^{\an}_{i}(\lambda)$};
    \draw[->] (C) -- (D) node[midway, above]{$\cup \mathcal{E}^{\an}_{i}(\lambda)$};
    \draw[->] (D) -- (B) ;
		\draw[->] (2) -- (E) node[midway, above]{$\cup \widehat{\mathcal{E}}^{\an}(\lambda)$};
		\draw[->] (E) -- (B) ;
\end{tikzpicture} 
\end{center}
The claim now follows from the first part of the next lemma and a simple diagram chase.
\end{proof}

\begin{Lem}\label{injectivity}
Let $J\subseteq\Delta$ be a subset.
\begin{enumerate}[(i)]
\item The canonical map
$$ \HH^{d}(G(F),\Fu_\Co(K^\p,v^{\infty}_{P_J}(\Co);\Co(\epsilon)))[\pi^{\p}]\too\HH^{d}(G(F),\Fu_\Co(K^\p,i^{\infty}_{P_J}(\Co);\Co(\epsilon)))[\pi^{\p}]$$
is injective for all $d$. 
\item It is an isomorphism in degree $d=q+|J|$.
\end{enumerate}
\end{Lem}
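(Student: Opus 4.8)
The plan is to induct on the size of the complement $\Delta \setminus J$, using the defining short exact sequences that relate the smooth generalized Steinberg representations $v^{\infty}_{P_J}$ to the ordinary inductions $i^{\infty}_{P_J}$. Recall that by definition $v^{\infty}_{P_J}(\Co)$ sits in an exact sequence
\begin{align*}
0 \too \sum_{J \subsetneq K} i^{\infty}_{P_K}(\Co) \too i^{\infty}_{P_J}(\Co) \too v^{\infty}_{P_J}(\Co) \too 0,
\end{align*}
and more usefully, there is the well-known resolution of $v^{\infty}_{P_J}(\Co)$ by the modules $i^{\infty}_{P_K}(\Co)$ for $K \supseteq J$ (the combinatorial complex indexed by subsets of $\Delta$ containing $J$), as well as a dual resolution of $i^{\infty}_{P_J}(\Co)$ by the $v^{\infty}_{P_K}(\Co)$ for $K \supseteq J$. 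The first step is to apply the functor $\Fu_\Co(K^\p, -; \Co(\epsilon))$, pass to cohomology, take $\pi^\p$-isotypic components (which is exact, being a direct summand projector), and obtain the associated spectral sequences or long exact sequences relating the two sides.

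For part (i), I would argue by descending induction on $|J|$, the base case $J = \Delta$ being trivial since $v^{\infty}_{P_\Delta} = i^{\infty}_{P_\Delta} = \Co$. For the inductive step, one uses that $i^{\infty}_{P_J}(\Co)$ has a filtration whose graded pieces are the $v^{\infty}_{P_K}(\Co)$ with $K \supsetneq J$ together with $v^{\infty}_{P_J}(\Co)$ itself appearing at the bottom; the injectivity of the map on $v^{\infty}_{P_J}$ then reduces, via the long exact sequences and the inductive hypothesis applied to the strictly larger $K$, to a statement that can be checked by weight/degree considerations. Concretely, the key input is Proposition \ref{componentpro}: on $\pi^\p$-isotypic components, the cohomology of $\Fu_\Co(K^\p, v^{\infty}_{P_K}; \Co(\epsilon))$ is concentrated (up to the binomial multiplicities $\binom{\delta}{d-q}$, shifted by $|K|$) — more precisely, $\HH^d(G(F), \Fu_\Co(K^\p, v^{\infty}_{P_K}; \Co(\epsilon)))[\pi^\p]$ is supported in degrees $d \in [q + |K|, q + |K| + \delta]$, because $v^{\infty}_{P_K}$ is a summand-twist of the Steinberg of the Levi and $\pi$ is Steinberg at $\p$. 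The degree shift by $|K|$ is exactly what makes the connecting maps in the long exact sequences vanish on isotypic components for degree reasons, yielding injectivity.

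For part (ii), one runs the same spectral sequence but now tracks which term survives in the critical degree $d = q + |J|$. By the degree-support statement above, among all the $v^{\infty}_{P_K}(\Co)$ with $K \supseteq J$ contributing to $\HH^{q+|J|}$, only $K = J$ can contribute in degree $q + |J|$ (any $K \supsetneq J$ forces its contribution into degrees $\geq q + |K| > q + |J|$), so the edge map of the spectral sequence computing $\HH^{q+|J|}$ of the resolution of $i^{\infty}_{P_J}$ in terms of the $v^{\infty}_{P_K}$ is an isomorphism onto the $K = J$ piece. This identifies $\HH^{q+|J|}(G(F), \Fu_\Co(K^\p, i^{\infty}_{P_J}(\Co); \Co(\epsilon)))[\pi^\p]$ with $\HH^{q+|J|}(G(F), \Fu_\Co(K^\p, v^{\infty}_{P_J}(\Co); \Co(\epsilon)))[\pi^\p]$, and a check of compatibility shows this isomorphism is inverse to the canonical map of part (i). I expect the main obstacle to be bookkeeping: setting up the resolution with the correct indexing by subsets of $\Delta$, verifying that the transition maps in the spectral sequence are the canonical ones (so the identification in (ii) really is the canonical map), and pinning down the precise degree-support of $\HH^\bullet(G(F), \Fu_\Co(K^\p, v^{\infty}_{P_K}; \Co(\epsilon)))[\pi^\p]$ — the last point requires knowing that $\pi$ being cohomological, tempered at $\infty$, and Steinberg at $\p$ forces the Jacquet-module contributions of all the $P_K$ with $K \neq B$-type parabolics to vanish outside the expected range, which is where hypothesis (SMO) and Proposition \ref{componentpro} do the real work.
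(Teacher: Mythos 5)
Your argument for part (ii) is essentially the paper's: it cites \cite{Ge4}, Proposition 3.9, which is exactly the degree--support statement you describe (the $\pi^\p$-isotypic cohomology of $\Fu_\Co(K^\p, v^\infty_{P_K}(\Co); \Co(\epsilon))$ is concentrated in degrees $[q+|K|, q+|K|+\delta]$), so that in the minimal degree $q+|J|$ only the $K=J$ Jordan--H\"older constituent of $i^\infty_{P_J}(\Co)$ can contribute. That part is fine.

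For part (i), however, your proposed degree argument has a genuine gap. From the short exact sequence $0 \to W \to i^\infty_{P_J}(\Co) \to v^\infty_{P_J}(\Co) \to 0$ with $W = \sum_{K \supsetneq J} i^\infty_{P_K}(\Co)$, the kernel of the map in question is the image of the connecting map
$\partial\colon \HH^{d-1}(G(F),\Fu_\Co(K^\p, W; \Co(\epsilon)))[\pi^\p] \to \HH^{d}(G(F),\Fu_\Co(K^\p,v^\infty_{P_J}(\Co); \Co(\epsilon)))[\pi^\p]$.
The source is supported in degrees $\geq q+|J|+1$ and the target in $[q+|J|, q+|J|+\delta]$, so both can be nonzero whenever $q+|J|+2 \leq d \leq q+|J|+\delta$ --- a nonempty range precisely when $\delta \geq 2$. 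Degree considerations alone therefore do not force the connecting map to vanish; they only suffice when $\delta \leq 1$, whereas the lemma is stated for general $\delta$. The paper's proof closes this gap with a different idea: after reducing via the combinatorial resolution to showing that cup product with any smooth extension $\mathcal{E}_{J,I}$ of $v^\infty_{P_J}(\Co)$ by $v^\infty_{P_I}(\Co)$ (with $|I|=|J|+1$) vanishes on $\pi^\p$-isotypic cohomology, one composes with cup product with the unique non-split smooth extension $\mathcal{E}_{I,J}$ in the other direction, which is an isomorphism on isotypic parts by \cite{Ge4}, Corollary 3.8. The composite is cup product with a smooth $2$-extension of $v^\infty_{P_I}(\Co)$ by itself, which is zero because the relevant smooth $\Ext^2$-group vanishes by \cite{Orlik}, Theorem 1. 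That $\Ext^2$-vanishing theorem for generalized Steinberg representations --- not a degree count --- is the crucial input missing from your sketch.
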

\begin{proof}
The Jordan--Hölder decomposition of $i^{\infty}_{P_J}(\Co)$ consists of all generalized Steinberg representations $v^{\infty}_{P_I}(\Co)$ with $J\subseteq I$, each occurring with multiplicity one.
Thus, the second claim follows from \cite{Ge4}, Proposition 3.9.

Via the well-known resolution (see for example \cite{Orlik}, Proposition 11)
$$0 \to i^{\infty}_{P_\Delta}(\Co)\to \bigoplus_{\substack{I\subseteq J\subseteq \Delta\\ |\Delta\setminus I|=1}}i^{\infty}_{P_I}(\Co)
\to \ldots \to \bigoplus_{\substack{J\subseteq I\subseteq \Delta\\ |I\setminus J|=1}}i^{\infty}_{P_I}(\Co) \to i^{\infty}_{P_J}(\Co) \too v^{\infty}_{P_J}(\Co) \to 0 $$
one can reduce the first claim to the following statement:
Let $\mathcal{E}_{J,I}$ be any smooth extension of $v^{\infty}_{P_J}(\Co)$ by $v^{\infty}_{P_I}(\Co)$, where $J\subseteq I \subseteq \Delta$ with $|I|=|J|+1$.
Then the map
$$\HH^{d}(G(F),\Fu_\Co(K^\p,v^{\infty}_{P_J}(\Co);\Co(\epsilon)))[\pi^{\p}]\too\HH^{d}(G(F),\Fu_\Co(K^\p,\mathcal{E}_{J,I};\Co(\epsilon)))[\pi^{\p}]$$
is injective for all $d$.
Equivalently, it is enough to show that the cup product
$$\HH^{d}(G(F),\Fu_\Co(K^\p,v^{\infty}_{P_I}(\Co);\Co(\epsilon)))[\pi^{\p}]\xrightarrow{\cup \mathcal{E}_{J,I}}\HH^{d+1}(G(F),\Fu_\Co(K^\p,v^{\infty}_{P_J}(\Co);\Co(\epsilon)))[\pi^{\p}]$$ is the zero map for all $d$.
Let $\mathcal{E}_{I,J}$ be the unique up to scalar non-split smooth extension of $v^{\infty}_{P_I}(\Co)$ by $v^{\infty}_{P_J}(\Co)$.
By \cite{Ge4}, Corollary 3.8, the cup product
$$\HH^{d+1}(G(F),\Fu_\Co(K^\p,v^{\infty}_{P_J}(\Co);\Co(\epsilon)))[\pi^{\p}]\xrightarrow{\cup \mathcal{E}_{I,J}}\HH^{d+2}(G(F),\Fu_\Co(K^\p,v^{\infty}_{P_I}(\Co);\Co(\epsilon)))[\pi^{\p}]$$
is an isomorphism.
Therefore, it is enough to prove that taking the cup product with $\mathcal{E}_{J,I} \cup \mathcal{E}_{I,J}$ induces the zero map on $\HH^{\bullet}(G(F),\Fu_\Co(K^\p,v^{\infty}_{P_I}(\Co);\Co(\epsilon)))[\pi^{\p}].$
This is true since $\mathcal{E}_{J,I} \cup \mathcal{E}_{I,J}$ is a smooth $2$-extension of $v^{\infty}_{P_I}(\Co)$ by itself and the space of all such extensions is zero by \cite{Orlik}, Theorem 1.
\end{proof}

\subsection{The automorphic Colmez--Greenberg--Stevens formula}\label{Big principal series}
Let $A$ be an $\Co$-affinoid algebra and $\chi\colon B\to A^{\ast}$ a locally analytic character.
The parabolic induction $\II_B^{\an}(\chi)$ is naturally an $A[G_\p]$-module.
Given an ideal $\m\subseteq A$ we let $\chi_\m\colon B\to (A/\m)^{\ast},\ x\mapstoo \chi(x) \bmod \m$ denote the reduction of $\chi$ modulo $\m$.
Similar as in Proposition 2.2.1 of \cite{Hansen} one can prove that
\begin{align*}
\Hom_{A}^{\cont}(\II_B^{\an}(\chi),A)\otimes_A A/\m &\cong \Hom_{A/\m}^{\cont}(\II_B^{\an}(\chi_\m),A/\m).
\end{align*}
Let $\m\in \Spm A$ be an $\Co$-rational point such that $\chi_\m=\cf_{G\p}$.
Thus, the reduction map induces the maps
$$\red_\chi^{d,\epsilon}\colon \HH^{d}(G(F),\Fu^{\cont}_{A}(K^\p,\II_{B}^{\an}(\chi);A(\epsilon)))\too\HH^{d}(G(F),\Fu^{\cont}_\Co(K^\p,\II_{B}^{\an}(\Co);\Co(\epsilon)))$$
in cohomology.

Let $i^\vee$ be the coroot associated with $i$.
We put $\chi_i=\chi \circ i^{\vee} \in \Hom(F_{\p}^{\ast},A).$
Suppose $v\colon \Spec \Co[\varepsilon]\to \Spm A$ is an element of the tangent space of $\Spm A$ at $\m$.
The pullback $\chi_{i,v}$ of $\chi_i$ along $v$ is of the form $\chi_{i,v}=1+\frac{\partial}{\partial v}\chi_{i}\cdot \varepsilon$ for a unique homomorphism $\frac{\partial}{\partial v}\chi_{i}\colon F_\p^{\ast}\to \Co.$

Lemma \ref{difflemma} immediately implies the following:
\begin{Pro}\label{CGS}
Suppose that the image of $\red_\chi^{q+d,\epsilon}$ contains the $\pi\pinfty$-isotypic component of $\HH^{q+d}(G(F),\Fu^{\cont}_\Co(K^\p,\II_{B}^{\an}(\Co);\Co(\epsilon)))$.
Then the homomorphism $\frac{\partial}{\partial v}\chi_{i}$ belongs to $\LI_{i}^{(d)}(\pi,\p)^{\epsilon}$ for every element $v$ of the tangent space of $\Spm A$ at $\m$.
\end{Pro}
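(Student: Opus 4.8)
The plan is to reduce Proposition~\ref{CGS} to Lemma~\ref{difflemma} by a base-change-along-the-tangent-vector argument. Given an element $v\colon \Spec \Co[\varepsilon]\to \Spm A$ of the tangent space at $\m$, base change along $v$ the locally analytic character $\chi\colon B\to A^{\ast}$ to obtain a character $\chi_v\colon B\to \Co[\varepsilon]^{\ast}$ with $\chi_v\equiv \cf_{G_\p}\bmod\varepsilon$ (using $\chi_\m=\cf_{G_\p}$). Since $\tau$ identifies such characters with $\tau_\mu$ for a unique $\mu\in\Hom^{\cont}(B,\Co)$, we get $\chi_v=\tau(\mu)$ for a unique $\mu$, and by construction of $\chi_{i,v}$ one checks $\mu_i=\mu\circ i^{\vee}=\frac{\partial}{\partial v}\chi_i$ (this is essentially the definition of the directional derivative, using that $\tau$ and pullback along $v$ are compatible group homomorphisms).

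Next I would compare the reduction maps. Base change of $\II_B^{\an}(\chi)$ along $v$ is $\II_B^{\an}(\chi_v)=\II_B^{\an}(\tau_\mu)$ as an $\Co[\varepsilon]$-representation of $G_\p$, using the compatibility of parabolic induction with base change together with the identification $\Hom_A^{\cont}(\II_B^{\an}(\chi),A)\otimes_A A/\m\cong\Hom_{A/\m}^{\cont}(\II_B^{\an}(\chi_\m),A/\m)$ recalled before the proposition (applied with $A/\m$ replaced by $\Co[\varepsilon]$ via $v$). Thus the functoriality of $\Fu^{\cont}$ and of cohomology yields a commutative diagram
\begin{center}
\begin{tikzpicture}
    \path
		(0,1.5) node[name=A]{$\HH^{q+d}(G(F),\Fu^{\cont}_{A}(K^\p,\II_{B}^{\an}(\chi);A(\epsilon)))$}
		(0,0) node[name=B]{$\HH^{q+d}(G(F),\Fu^{\cont}_{\Co[\varepsilon]}(K^\p,\II_{B}^{\an}(\tau_\mu);\Co[\varepsilon](\epsilon)))$}
		(8,0.75) node[name=C]{$\HH^{q+d}(G(F),\Fu^{\cont}_\Co(K^\p,\II_{B}^{\an}(\Co);\Co(\epsilon)))$};
    \draw[->] (A) -- (B) node[midway,left]{$v^{\ast}$};
    \draw[->] (A) -- (C) node[midway,above]{$\red_\chi^{q+d,\epsilon}$};
    \draw[->] (B) -- (C) node[midway,below]{$\red_\mu^{q+d,\epsilon}$};
\end{tikzpicture}
\end{center}
so that $\im(\red_\chi^{q+d,\epsilon})\subseteq\im(\red_\mu^{q+d,\epsilon})$. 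Hence if $\im(\red_\chi^{q+d,\epsilon})$ contains the $\pi\pinfty$-isotypic component of $\HH^{q+d}(G(F),\Fu^{\cont}_\Co(K^\p,\II_{B}^{\an}(\Co);\Co(\epsilon)))$, so does $\im(\red_\mu^{q+d,\epsilon})$, and Lemma~\ref{difflemma} applied to the character $\mu$ gives that $\mu_i=\frac{\partial}{\partial v}\chi_i$ lies in $\LI_i^{(d)}(\pi,\p)^{\epsilon}$.

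The part requiring the most care is the identification $\II_B^{\an}(\chi)\otimes_{A,v}\Co[\varepsilon]\cong\II_B^{\an}(\tau_\mu)$ compatibly with the $G_\p$-action and with the dual/cohomology formation — i.e.\ checking that base change commutes with locally analytic parabolic induction and with $\Hom_A^{\cont}(-,A)$ in the relevant sense. This is exactly the content of the displayed isomorphism cited from \cite{Hansen} (Proposition~2.2.1) together with the $\Co[\varepsilon]$-linearity of everything in sight; since $\Co[\varepsilon]$ is a finite flat $\Co$-algebra (indeed $v$ factors through a finite $\Co$-subalgebra of $A/\m^2$), no topological subtleties beyond those already handled in Section~\ref{Big principal series} arise. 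Once this compatibility is in place, the statement is a formal consequence, which is why the proof in the text can simply say it follows immediately from Lemma~\ref{difflemma}.
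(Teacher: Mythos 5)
Your argument is correct and is precisely the reduction the paper has in mind when it says the proposition follows "immediately" from Lemma~\ref{difflemma}: base change along the tangent vector $v$ turns $\chi$ into $\tau(\mu)$ with $\mu_i=\frac{\partial}{\partial v}\chi_i$, the reduction map $\red_\chi^{q+d,\epsilon}$ factors through $\red_\mu^{q+d,\epsilon}$, and Lemma~\ref{difflemma} applied to $\mu$ gives the claim. You have simply made explicit the compatibilities (base change of parabolic induction and of $\Hom_A^{\cont}(-,A)$) that the paper treats as implicit.
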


\section{Overconvergent cohomology}
After giving a brief overview on Kohlhaase and Schraen's Koszul resolution of locally analytic principal series (see \cite{KoSch}) we recall the control theorem relating overconvergent cohomology to classical cohomology as proven by Ash--Stevens, Urban and Hansen (see \cite{AshSt}, \cite{Ur} and \cite{Hansen}).
Combining the two results allows us to construct classes in the cohomology of (duals of) principal series.
If $G_\infty$ fulfils the Harish-Chandra condition, i.e.~if $\delta=0$, we can lift the construction to families of principal series.
This implies our main theorem.
\subsection{Koszul complexes}\label{Koszul}
In \cite{KoSch} Kohlhaase and Schraen construct a resolution of locally analytic principal series representations via a Koszul complex.
We recall their construction in a slightly more general setup:
instead of restricting to $p$-adic fields as coefficient rings we allow affinoid algebras.
The proofs of \textit{loc.cit.}~carry over verbatim to this more general framework. 

Let us fix some notation:
we denote the Borel opposite of $B\subseteq{G_{F_\p}}$ by $\overline{B}$.
Let $\overline{N}\subseteq\overline{B}$ be its unipotent radical.
The chosen torus $T\subseteq B\subseteq G_{F_\p}$ determines an apartment in the Bruhat-Tits building of $G_\p$.
We chose a chamber $C$ of that apartment and a special vertex $v$ of $C$ as in Section 3.5. of \cite{CartierCorvallis}.
The stabilizer $G_{\p,0}\subseteq G_\p$ of $v$ is a maximal compact subgroup of $G_\p$ and the stabilizer $I_\p\subseteq G_{\p,0}$ of $C$ is an Iwahori subgroup.
Let $\mathfrak{G}_0$ be the Bruhat-Tits group scheme over $\mathcal{O}_\p$ associated with $G_{\p,0}$.
We define
$$I_\p^{n}=\Ker(I_\p\too \mathfrak{G}_0(\mathcal{O}_\p/\p^{n})).$$
The open normal subgroups $I_\p^{n}\subseteq I_\p$ form a system of neighbourhoods of the identity in $I_\p$.
The subgroup $T_0=T\cap G_{\p,0}$ is maximal compact subgroup of $T$.

Let $X^\ast(T)$ (respectively $X_{\ast}(T)$) denote the group of $F_\p$-rational characters (respectively cocharacters) of $T$.
The natural pairing
$$\left\langle \cdot,\cdot \right\rangle\colon X^\ast(T) \times X_\ast(T)\too \Z$$
is a perfect pairing.
There is a natural isomorphism $T/T_0\cong X_{\ast}(T)$ characterized by
$$\left\langle \chi,t\right\rangle=\ord_{\p}(\chi(t)).$$
We denote by $\Phi^{+}$ the set of positive roots with respect to $B$ and put
$$T^{-}=\left\{t\in T\mid \ord_{\p}(\alpha(t))\leq 0\ \forall \alpha\in\Phi^{+}\right\}.$$

Let $A$ be a $\Q_p$-affinoid algebra.
Restricting to $T_0$ gives a bijection between locally analytic characters $\chi\colon B\cap I_\p\to A^{\ast}$ and locally analytic characters $\chi\colon T_0\to A^{\ast}$.
Given such a character $\chi$ we write
$$\An_{\chi}=\left\{f\colon I_\p\too A\ \mbox{locally analytic}\mid f(bk)=\chi(b)f(k)\ \forall b\in B\cap I_\p,\ k\in I_p\right\}$$
for the locally analytic induction of $\chi$ to $I_\p$.
It is naturally an $A[I_\p]$-module.
Restricting a function $f\in \An_{\chi}$ to the intersection $I_\p\cap \overline{N}$ induces an isomorphism of $\An_\chi$ with the space of all locally analytic functions from $I_\p\cap \overline{N}$ to $A$.
There exists a minimal integer $n_\chi\geq 1$ such that $\chi$ restricted to $B\cap I_\p^{n_\chi}$ is rigid analytic.
For any $n\geq n_\chi$ we define the $A[I_\p]$-submodule
$$\An_{\chi}^{n}=\left\{f\in \An_{\chi} \mid f \mbox{ is rigid analytic on any coset in } I_\p/I_\p^{n}\right\}.$$

For later purposes we define the dual spaces
\begin{align*}
\Di^n_{\chi}&=\Hom_A^{\cont}(\An^n_{\chi}, A).
\end{align*}

By Frobenius reciprocity we can identify $\End_{A[ G_\p]}\left(\cind_{I_\p}^{G_\p} (\An^{n}_{\chi})\right)$ with the space of all functions $\Psi\colon G_\p\to \End_{A}(\An_{\chi}^{n})$ such that
\begin{itemize}
\item $\Psi$ is $I_\p$-biequivariant, i.e.~$\Psi(k_1 g k_2)=k_1 \Psi (g)k_2$ for all $k_1 ,k_2\in I_\p$, $g\in G_\p$, and 
\item for every $f\in\An_{\chi}^{n}$ the function $G_\p\to \An_{\chi}^{n},\ g\mapsto \Psi(g)(f)$ is compactly supported.
\end{itemize}

Let $t$ be an element of $T^{-}$ and $f\in \An_{\chi}^{n}.$
The function $I_\p\to A,\ u\mapsto f(tut^{-1})$ defines an element of $\An_{\chi}^{n}$.
\begin{Lem}\label{commutative}
For every element $t\in T^{-}$ there exists a unique $I_\p$-biequivariant function $\Psi_t\colon G_\p \to \End_A(\An_{\chi}^{n})$ such that
\begin{itemize}
\item $\supp(\Psi_t)= I_\p t^{-1} I_\p$ and
\item $\Psi_t(t^{-1})(f)(u)=f(tut^{-1})$ for any $f\in \An_{\chi}^{n}$ and $u\in I_\p\cap \overline{N}$.
\end{itemize}
\end{Lem}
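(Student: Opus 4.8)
The plan is to construct $\Psi_t$ by specifying its value at $t^{-1}$ and then extending by $I_\p$-biequivariance, checking that this extension is well-defined exactly because of the structure of the double coset $I_\p t^{-1} I_\p$ for $t \in T^-$. First I would record the precise meaning of the two bullet conditions: an $I_\p$-biequivariant function supported on a single double coset $I_\p t^{-1} I_\p$ is determined by its value at $t^{-1}$, and that value must be an endomorphism $\phi = \Psi_t(t^{-1}) \in \End_A(\An_\chi^n)$ satisfying the compatibility forced by the stabilizer: if $k_1 t^{-1} k_2 = t^{-1}$ with $k_1, k_2 \in I_\p$, i.e.\ $k_1 = t^{-1} k_2^{-1} t \in I_\p$, then we need $k_1 \phi k_2 = \phi$ on $\An_\chi^n$. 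So the real content is (a) the formula $\phi(f)(u) = f(tut^{-1})$ for $u \in I_\p \cap \overline{N}$ genuinely defines an element of $\End_A(\An_\chi^n)$, and (b) the resulting $\phi$ satisfies the stabilizer compatibility so that the biequivariant extension to $I_\p t^{-1} I_\p$ exists and is unique; uniqueness on the rest of $G_\p$ is automatic since $\Psi_t$ must vanish off the double coset.

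For step (a), I would use the identification, stated just before the lemma, of $\An_\chi^n$ (restriction to $I_\p \cap \overline{N}$) with rigid analytic functions on the cosets of $I_\p \cap \overline{N}$ modulo $I_\p^n \cap \overline{N}$. Since $t \in T^-$ we have $\ord_\p(\alpha(t)) \le 0$ for all $\alpha \in \Phi^+$, hence conjugation $u \mapsto t u t^{-1}$ contracts $I_\p \cap \overline{N}$ into itself (the negative root subgroups get compressed). Thus $u \mapsto f(t u t^{-1})$ makes sense for $f \in \An_\chi^n$ and, because conjugation by $t$ maps $I_\p^m \cap \overline{N}$ into $I_\p^m \cap \overline{N}$, a rigid analytic $f$ pulls back to a rigid analytic function; so $\phi(f) \in \An_\chi^n$, and $A$-linearity and continuity of $\phi$ are clear. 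Here one also needs that this prescription on $I_\p \cap \overline{N}$ extends to a well-defined function on all of $I_\p$ transforming correctly under $B \cap I_\p$ on the left — but that is exactly the content of the isomorphism $\An_\chi^n \xrightarrow{\sim} \{\text{rigid an. functions on } I_\p \cap \overline{N}\}$, so nothing further is required.

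For step (b), I would verify the stabilizer compatibility directly. Writing $k_1 = t^{-1} k_2^{-1} t$, one computes $(k_1 \phi k_2)(f)$ and, tracing through the definitions of the $I_\p$-action on $\An_\chi^n$ and of $\phi$, reduces to the identity $f(t (k_2 \cdot u \cdot k_2^{-1}) t^{-1}) = f(t u t^{-1})$ up to the cocycle coming from the $B \cap I_\p$-part — which holds because $k_1 = t^{-1} k_2^{-1} t$ lies in $I_\p$ by construction of $T^-$ and the Iwahori decomposition $I_\p = (I_\p \cap \overline{N})(I_\p \cap B)$ lets one separate the pieces. Then $\Psi_t(k_1 t^{-1} k_2) := k_1 \phi k_2$ is a consistent definition on $I_\p t^{-1} I_\p$, extended by $0$ elsewhere; compact support is immediate since a single double coset is compact open. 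Uniqueness holds because any $\Psi_t'$ with the stated support and value at $t^{-1}$ must agree with $\Psi_t$ by biequivariance on $I_\p t^{-1} I_\p$ and by the support condition off it. I expect step (b), the verification that the $I_\p$-stabilizer of $t^{-1}$ acts trivially on $\phi$ in the appropriate sense, to be the main (though ultimately routine) obstacle, since it is where the hypothesis $t \in T^-$ and the precise Iwahori structure enter; everything else is bookkeeping with the Frobenius reciprocity description of $\End_{A[G_\p]}(\cind_{I_\p}^{G_\p} \An_\chi^n)$ recalled above.
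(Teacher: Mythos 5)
Your construction is correct and is essentially the paper's own route: the paper disposes of this lemma by citing Kohlhaase--Schraen (Lemma 2.2 of \cite{KoSch}) as a ``minor generalization'', and the argument there is exactly what you spell out — determine $\Psi_t(t^{-1})$ on $I_\p\cap\overline{N}$, use that conjugation by $t\in T^{-}$ contracts $I_\p\cap\overline{N}$ (and its congruence subgroups) so the formula lands in $\An_\chi^{n}$, and extend by biequivariance after checking compatibility with the stabilizer of $t^{-1}$. The only point worth making explicit in your step (b) is that the ``cocycle'' match amounts to $\chi(tbt^{-1})=\chi(b)$ for $b\in B\cap I_\p$, which holds because $t$ centralizes the $T_0$-component and $\chi$ is trivial on the unipotent part.
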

\begin{proof}
This is a minor generalization of \cite{KoSch}, Lemma 2.2.
\end{proof}

For $t\in T^{-}$ we denote by $U_t$ the endomorphism of $\cind_{I_\p}^{G_\p} (\An^{n}_{\chi})$ corresponding to $\Psi_t$.
The following is a straightforward generalization of \cite{KoSch}, Lemma 2.3.
\begin{Lem}
We have $U_t U_{\tilde{t}}= U_{t\tilde{t}}$ for all $t,\tilde{t}\in T^{-}$.
\end{Lem}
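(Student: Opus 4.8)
The plan is to pass to the side of $I_\p$-biequivariant functions and prove the equivalent identity $\Psi_t\ast\Psi_{\tilde t}=\Psi_{t\tilde t}$, where $\ast$ is the convolution product on $\End_A(\An_\chi^n)$-valued functions on $G_\p$ which corresponds, under the Frobenius reciprocity identification recalled above, to composition of endomorphisms of $\cind_{I_\p}^{G_\p}(\An_\chi^n)$. Concretely, for two such functions $\Psi,\Psi'$ with compact support, $\Psi\ast\Psi'$ is again $I_\p$-biequivariant with compact support, its support is contained in the product set $\supp(\Psi)\cdot\supp(\Psi')$, and its value at $g\in G_\p$ is a finite sum of composites $\Psi(g_1)\circ\Psi'(g_2)$ indexed by the finitely many cosets $g_2I_\p$ with $g_2\in\supp(\Psi')$ and $g_1:=gg_2^{-1}\in\supp(\Psi)$.

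First I would compute $\supp(\Psi_t\ast\Psi_{\tilde t})\subseteq I_\p t^{-1}I_\p\cdot I_\p\tilde t^{-1}I_\p$, using Lemma \ref{commutative}. The key point — and the only genuine obstacle — is the purely group-theoretic identity
$$I_\p t^{-1}I_\p\,\tilde t^{-1}I_\p = I_\p t^{-1}\tilde t^{-1}I_\p = I_\p(t\tilde t)^{-1}I_\p\qquad(t,\tilde t\in T^{-}),$$
together with the fact that this product of double cosets has ``multiplicity one'': since $t$ and $\tilde t$ commute and $t^{-1},\tilde t^{-1}$ lie in the antidominant monoid, lengths add in the extended affine Weyl group, so no collapse occurs. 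This is exactly \cite{KoSch}, Lemma 2.3 (it underlies the Bernstein presentation of the Iwahori--Hecke algebra), and since it only involves the Bruhat--Tits combinatorics of $G_\p$ and not the coefficients, its proof carries over verbatim to the affinoid coefficient ring $A$.

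Granting this, both $\Psi_t\ast\Psi_{\tilde t}$ and $\Psi_{t\tilde t}$ are $I_\p$-biequivariant functions supported on the single double coset $I_\p(t\tilde t)^{-1}I_\p$, so it suffices to compare their values at the point $(t\tilde t)^{-1}=t^{-1}\tilde t^{-1}$. Writing this decomposition with $t^{-1}\in\supp(\Psi_t)$ and $\tilde t^{-1}\in\supp(\Psi_{\tilde t})$, the multiplicity-one property forces $(\Psi_t\ast\Psi_{\tilde t})\bigl((t\tilde t)^{-1}\bigr)=\Psi_t(t^{-1})\circ\Psi_{\tilde t}(\tilde t^{-1})$. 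It remains to evaluate this composite on $f\in\An_\chi^n$; since restriction to $I_\p\cap\overline{N}$ is injective on $\An_\chi^n$, it is enough to evaluate at $u\in I_\p\cap\overline{N}$. Using that conjugation by $t\in T^{-}$ maps $I_\p\cap\overline{N}$ into itself and the explicit formula of Lemma \ref{commutative},
$$\bigl(\Psi_t(t^{-1})\circ\Psi_{\tilde t}(\tilde t^{-1})(f)\bigr)(u)=\bigl(\Psi_{\tilde t}(\tilde t^{-1})(f)\bigr)(tut^{-1})=f\bigl(\tilde t\,(tut^{-1})\,\tilde t^{-1}\bigr)=f\bigl((t\tilde t)\,u\,(t\tilde t)^{-1}\bigr),$$
the last step because $T$ is abelian. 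By Lemma \ref{commutative} applied to $t\tilde t\in T^{-}$ the right-hand side is precisely $\Psi_{t\tilde t}\bigl((t\tilde t)^{-1}\bigr)(f)(u)$. Hence $\Psi_t\ast\Psi_{\tilde t}=\Psi_{t\tilde t}$, i.e.~$U_tU_{\tilde t}=U_{t\tilde t}$. Everything beyond the double-coset identity is a formal manipulation with the explicit description in Lemma \ref{commutative}, so the argument of \cite{KoSch}, Lemma 2.3 applies with only notational changes.
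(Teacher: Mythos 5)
Your argument is correct and is essentially the paper's: the paper disposes of this lemma by citing \cite{KoSch}, Lemma 2.3, and the proof of that lemma is exactly the convolution/double-coset computation you give (support containment, length additivity in the extended affine Weyl group for the antidominant monoid giving $I_\p t^{-1}I_\p\cdot I_\p\tilde t^{-1}I_\p=I_\p(t\tilde t)^{-1}I_\p$ with structure constant one, then evaluation at $(t\tilde t)^{-1}$ via the explicit formula of Lemma \ref{commutative}), which carries over verbatim to affinoid coefficients. One cosmetic fix: the convolution sum should be indexed by right cosets $I_\p g_2\in I_\p\backslash G_\p$ (equivalently written as $\sum_{xI_\p\in G_\p/I_\p}\Psi(x)\circ\Psi'(x^{-1}g)$), since with left cosets $g_2I_\p$ the summand $\Psi(gg_2^{-1})\circ\Psi'(g_2)$ and even the condition $gg_2^{-1}\in\supp(\Psi)$ are not independent of the chosen representative; this change does not affect the rest of your argument.
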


Now let us fix a character $\chi\colon T\to A^{\ast}$ and let $\chi_0$ be its restriction to $T_0$.
Given an open subset $C\subseteq G_\p$, which is stable under multiplication with $B$ from the left, we denote by
$$\II^{\an}_B(\chi) (C)\subseteq \II^{\an}_B(\chi)$$
the subset of all functions with support in $C$.
Restricting functions to $I_\p$ gives an $I_\p$-equivariant $A$-linear isomorphism
$$\II^{\an}_B(\chi)(BI_\p)\xrightarrow{\cong} \An_{\chi_0}.$$
Thus, by Frobenius reciprocity its inverse induces a $G_\p$-equivariant $A$-linear map
\begin{align}\label{Koszulaug}
\aug_\chi\colon \cind_{I_\p}^{G_\p}(\An^{n}_{\chi_0})\too\cind_{I_\p}^{G_\p}(\An_{\chi_0})\too \II^{\an}_B(\chi).
\end{align}
for any integer $n\geq n_{\chi_0}$.

Since the group $G_{F_\p}$ is adjoint, there exist elements $t_i\in T^{-}$, $i\in\Delta$, such that
$$t_i\in \bigcap_{j\in\Delta\setminus\left\{i\right \}}\Ker(j)$$
and
$$\ord_{\p}(i(t_i))=-1.$$
The element $t_i$ is uniquely determined by the value $i(t_i)^{-1}\in F_\p^{\ast}$, which is a uniformizer.
Every element $t\in T$ can uniquely be written as $t=t_0\prod_{i\in\Delta}t_{i}^{n_i}$ with $t\in T_0$ and integers $n_{i}\in\Z$. 
Let us fix a choice of $t_i$, $i\in\Delta$, and put
$$y_i=U_{t_i}-\chi(t_i).$$ 

\begin{Pro}\label{Koszulkernel}
The $G_\p$-equivariant $A$-linear map $\aug_\chi$ is surjective with kernel $\sum_{i\in\Delta}\im(y_i).$
\end{Pro}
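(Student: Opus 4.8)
The plan is to follow the strategy of \cite{KoSch} and reduce the statement to an explicit computation on the level of the group $\overline{N}$, exploiting the Iwahori decomposition $I_\p = (I_\p\cap\overline{N})\cdot(I_\p\cap B)$ and the Bruhat decomposition of $G_\p$. First I would describe the image and kernel of $\aug_\chi$ in terms of the identification $\II^{\an}_B(\chi)(BI_\p)\cong\An_{\chi_0}$. The map $\aug_\chi$ is built from the inclusion $\cind_{I_\p}^{G_\p}(\An^n_{\chi_0})\to\cind_{I_\p}^{G_\p}(\An_{\chi_0})$ followed by the canonical surjection $\cind_{I_\p}^{G_\p}(\An_{\chi_0})\to\II^{\an}_B(\chi)$ coming from Frobenius reciprocity; the latter is surjective because $\II^{\an}_B(\chi)$ is generated as a $G_\p$-representation by $\II^{\an}_B(\chi)(BI_\p)$ (the Iwasawa decomposition $G_\p=BG_{\p,0}$, refined by the finitely many cosets of $I_\p$ in $G_{\p,0}$, shows that every function in $\II^{\an}_B(\chi)$ is a finite sum of $G_\p$-translates of functions supported on $BI_\p$). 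Surjectivity of $\aug_\chi$ itself then follows once one knows that the operators $U_t$, $t\in T^-$, move the ``analyticity radius'': for any $f\in\An_{\chi_0}$ there is $n$ and $t\in T^-$ deep enough so that $U_t$ applied to the corresponding element of $\An^n_{\chi_0}$ recovers (a translate of) $f$; this is exactly the mechanism by which $\cind_{I_\p}^{G_\p}(\An^n_{\chi_0})\to\II^{\an}_B(\chi)$ is already onto. Concretely I would argue that the image of $\aug_\chi$ is a $G_\p$-subrepresentation of $\II^{\an}_B(\chi)$ containing all functions analytic on cosets of $I_\p^n$ supported on $BI_\p$, hence everything.

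For the kernel, the containment $\sum_{i\in\Delta}\im(y_i)\subseteq\Ker(\aug_\chi)$ is the easy direction: by Lemma \ref{commutative} the operator $U_{t_i}$ corresponds to the double coset $I_\p t_i^{-1}I_\p$, and applied via $\aug_\chi$ it acts on $\II^{\an}_B(\chi)$ as (a component of) the group action of $t_i$, which on the $B$-isotypic part scales by $\chi(t_i)$; hence $y_i=U_{t_i}-\chi(t_i)$ maps into the kernel. I would make this precise by checking the commutation of $\aug_\chi$ with $U_{t_i}$ against the action of $t_i^{-1}\in T$ on $\II^{\an}_B(\chi)$, using the defining formula $\Psi_{t_i}(t_i^{-1})(f)(u)=f(t_iut_i^{-1})$ and the character relation on $I_\p\cap\overline{N}$.

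The reverse inclusion $\Ker(\aug_\chi)\subseteq\sum_{i\in\Delta}\im(y_i)$ is the main obstacle and where the Koszul structure enters. Here I would mimic \cite{KoSch}: identify $\cind_{I_\p}^{G_\p}(\An^n_{\chi_0})$ after passing to $\overline{N}$-coordinates with a space of functions on $I_\p\cap\overline{N}$ times the cocharacter lattice $X_\ast(T)\cong T/T_0$, and observe that the submonoid generated by $t_1^{-1},\dots,t_\ell^{-1}$ (where $\ell=|\Delta|$) acts by translations that, together with $\chi$-twisting, turn the quotient $\cind_{I_\p}^{G_\p}(\An^n_{\chi_0})\big/\sum_i\im(y_i)$ into a ``completed'' localization along these $\ell$ directions; because $G_{F_\p}$ is adjoint the $t_i$ form a basis of $T/T_0\otimes\Q$, so this localization/completion is exactly the space of germs along $BI_\p$, namely $\II^{\an}_B(\chi)$. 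Equivalently — and this is probably the cleanest route — one shows directly that the quotient map $\cind_{I_\p}^{G_\p}(\An^n_{\chi_0})/\sum_i\im(y_i)\to\II^{\an}_B(\chi)$ is injective by exhibiting an explicit section on each $B$-stable open piece $BwI_\p$ of the Bruhat stratification and checking compatibility: on the big cell $w=e$ one uses that every orbit of the $t_i$-monoid eventually lands in the fixed analyticity level $n$, so modulo $\sum_i\im(y_i)$ every element is represented by a function supported on $BI_\p$, and the $y_i$-relations precisely kill the redundancy between different representatives. The point requiring care is the interplay of the finitely many Iwahori double cosets with the monoid action — i.e.\ checking that no element outside $\sum_i\im(y_i)$ survives in the kernel — and this is exactly the content of \cite{KoSch}, Proposition 2.4, whose proof, as noted in the text, carries over verbatim with $A$ in place of a $p$-adic field since all the arguments are $A$-linear and use only the topological freeness of $\An^n_{\chi_0}$ over $A$.
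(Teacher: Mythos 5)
Your proposal is correct and takes essentially the same route as the paper: the paper's entire proof of this proposition is the observation that the proof of \cite{KoSch}, Proposition 2.4, carries over verbatim to affinoid coefficient rings $A$, which is exactly the citation your argument ultimately rests on. The surrounding sketch you give (surjectivity via the Iwasawa decomposition and the contracting action of $T^-$, the easy containment $\sum_{i\in\Delta}\im(y_i)\subseteq\Ker(\aug_\chi)$ from Lemma \ref{commutative}, and the hard inclusion deferred to Kohlhaase--Schraen) is a reasonable reconstruction of that proof and does not change the substance.
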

\begin{proof}
The same proof as for \cite{KoSch}, Proposition 2.4, works.
\end{proof}

By Lemma \ref{commutative} the $G_\p$-representation $\cind_{I_\p}^{G_\p}(\An^{n}_{\chi_0})$ is a module over the polynomial algebra $A[X_i\mid i\in\Delta]$, where $X_i$ acts through the operator $T_i$.
The Koszul complex of $\cind_{I_\p}^{G_\p}(\An^{n}_{\chi_0})$ with respect to the endomorphisms $(y_i)_{i\in\Delta}$ is the complex $\Lambda^{\bullet}_A(A^{\Delta}) \otimes \cind_{I_\p}^{G_\p}(\An_{\chi_0})$ with boundary maps
\begin{align}\label{boundary}
d_k(e_{i_1}\wedge\ldots\wedge e_{i_k}\otimes f)=\sum_{l=1}^{k}(-1)^{l+1} e_{i_1}\wedge\ldots\wedge \widehat{e_{i_l}}\wedge\ldots\wedge e_{i_k}\otimes y_{i_l}(f).
\end{align}

The following is the main technical theorem of Kohlhaase--Schraen (cf.~\cite{KoSch}, Theorem 2.5) generalized to affinoid coefficient rings.
\begin{Thm}[Kohlhaase--Schraen]\label{Koszulres}
For any $n\geq n_{\chi_0}$ the augmented Koszul complex
$$\Lambda^{\bullet}_A(A^{\Delta}) \otimes_A \cind_{I_\p}^{G_\p}(\An^{n}_{\chi_0})\too \II^{\an}_B(\chi)\too 0$$
with boundary maps \eqref{boundary} and augmentation map \eqref{Koszulaug} is exact.
\end{Thm}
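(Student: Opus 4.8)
The plan is to recognise the augmented Koszul complex in the statement as the homological Koszul complex of the module $M:=\cind_{I_\p}^{G_\p}(\An^{n}_{\chi_0})$ with respect to the commuting family of endomorphisms $(y_i)_{i\in\Delta}$, $y_i=U_{t_i}-\chi(t_i)$. Commutativity of the $y_i$ is immediate from the identity $U_tU_{\tilde t}=U_{t\tilde t}$ established above, together with the fact that the $t_i$ pairwise commute in $T^{-}$; thus $M$ is a module over the polynomial algebra $A[X_i\mid i\in\Delta]$ with $X_i$ acting by $U_{t_i}$. By Proposition \ref{Koszulkernel} the augmentation $\aug_\chi$ is surjective with kernel $\sum_{i\in\Delta}\im(y_i)$, so it identifies the zeroth Koszul homology $M/(y_i)_iM$ with $\II^{\an}_B(\chi)$. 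Consequently the theorem is equivalent to the vanishing of the Koszul homology $H_j\big((y_i)_i;M\big)$ in all degrees $j>0$, i.e.\ to the assertion that $(y_i)_{i\in\Delta}$ is a regular sequence on $M$.

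To prove that I would follow the argument of \cite{KoSch} and exploit a filtration of $M$ coming from the geometry of the Bruhat--Tits building. Using the Iwahori--Bruhat decomposition of $G_\p$, indexed by the extended affine Weyl group, one equips $M$ with an exhaustive, separated increasing filtration $F_\bullet M$, stable under each $U_{t_i}$ with $U_{t_i}(F_kM)\subseteq F_{k+1}M$, whose associated graded $\operatorname{gr}^F M$ is \emph{free} as a module over the polynomial algebra $A[\overline X_i\mid i\in\Delta]$, where $\overline X_i$ is the operator induced by $U_{t_i}$. This freeness is the ``fundamental domain'' computation of \textit{loc.\ cit.}: since the $t_i$ were chosen so that $t_i\in\bigcap_{j\neq i}\Ker(j)$ and $\ord_\p(i(t_i))=-1$, the submonoid of $T/T_0$ they generate is free of rank $|\Delta|$, and on graded pieces the Hecke operator $U_{t_i}$ acts simply by translating a fixed $A$-basis of $\An^{n}_{\chi_0}$-valued functions into the next building-stratum, with no collisions. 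As $\overline X_i-\chi(t_i)$ differs from $\overline X_i$ only by a coordinate change of the polynomial ring, $(\overline y_i)_i$ is then a genuine regular sequence on the free module $\operatorname{gr}^F M$, so the Koszul complex $\Lambda^{\bullet}_A(A^{\Delta})\otimes_A\operatorname{gr}^F M$ resolves $\operatorname{gr}^F M/(\overline y_i)_i$ and is acyclic in positive degrees.

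One then transports this acyclicity to $M$ in the usual way: $F_\bullet M$ induces a filtration of the Koszul complex $\Lambda^{\bullet}_A(A^{\Delta})\otimes_A M$ whose associated spectral sequence has $E^1$-term a regrading of $\Lambda^{\bullet}_A(A^{\Delta})\otimes_A\operatorname{gr}^F M$; since the positive-degree homology already vanishes at $E^1$, it vanishes at $E^\infty$, and as the filtration is exhaustive, separated and bounded below in each total degree the spectral sequence converges, yielding $H_j\big((y_i)_i;M\big)=0$ for $j>0$. Combined with Proposition \ref{Koszulkernel}, this gives exactness of the augmented Koszul complex for every $n\geq n_{\chi_0}$; the argument is uniform in $n$, since passing to a larger $n$ only replaces $\An^{n}_{\chi_0}$ by a bigger orthonormalizable $A$-module and changes none of the combinatorial input.

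The only real obstacle is the construction of the filtration $F_\bullet M$ together with the verification that $\operatorname{gr}^F M$ is free over $A[\overline X_i\mid i\in\Delta]$, which rests on an explicit description of the Hecke operators $U_{t_i}$ in terms of the building; this is the technical heart of \cite{KoSch}, Theorem~2.5. What remains to be said in the present generality is only that none of this uses that the coefficient ring is a $p$-adic field: every module in sight is built out of $\An^{n}_{\chi_0}$, which is an orthonormalizable $A$-Banach module, and the double-coset combinatorics is entirely insensitive to $A$, so the proof of \textit{loc.\ cit.}\ goes through verbatim over any $\Q_p$-affinoid algebra $A$.
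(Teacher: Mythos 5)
Your proposal is correct and matches the paper's treatment: the paper gives no independent argument for this theorem, but simply invokes \cite{KoSch}, Theorem 2.5, after noting (as you do) that the proofs of \textit{loc.\ cit.}\ carry over verbatim to affinoid coefficient rings since nothing in the construction uses that the base is a $p$-adic field. Your additional sketch of the internal Kohlhaase--Schraen mechanism (reduction via Proposition \ref{Koszulkernel} to regularity of the $y_i$, the building filtration, and the associated spectral sequence) is a reasonable reconstruction of their argument, but the essential step you share with the paper is precisely the observation that the argument is uniform in the orthonormalizable Banach module $\An^{n}_{\chi_0}$ and hence in $A$.
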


\begin{Rem}
All of the results above remain valid if one replaces $\An_{\chi_0}^{n}$ by $\An_{\chi_0}$.
\end{Rem}

\begin{Exa}
Suppose $G_\p=\PGL_n(F_\p)$, $T$ is the torus of diagonal matrices and $B$ the Borel subgroup of upper triangular matrices.
The simple roots of $T$ with respect to $B$ are given by
$$i(\diag(x_1,\ldots,x_n))=x_i x_{i+1}^{-1}$$
for $1\leq i \leq d-1$.
For each simple root $1\leq i \leq d-1$ we might choose for $t_i$ the image of the diagonal matrix
$$\diag(1,\ldots,1,\pi,\ldots,\pi),$$
where $\pi$ is a uniformizer and exactly the first $i$ entries equal to one.
For the Iwahori subgroup $I_\p$ we may choose the image in $G_\p$ of all matrices in $\GL_n(\mathcal{O}_\p)$, which are upper triangular modulo $\p$.
Then $I_\p^{n}$ consists of all matrices in $I_\p$ which are congruent to the identity modulo $\p^{n}$.
\end{Exa}

There is also a smooth variant of the above result, which is probably well-known.
As we could not find a reference in the literature we give a proof of said variant in the following.

Let $\Omega$ be field of characteristic zero.
With the same formula as before, we can define commuting Hecke operators $U_t\in \End_{G_\p}(\cind_{I_\p}^{G_\p}(\Co))$ for $t\in T^{-}$.
Let $\cf\colon T_0\to \Omega^\times$ denote the constant character on $T_0$.
In case $\Omega=\Co$ we can identify $\Co$ with the subspace of constant functions in $\An^{n}_{\cf}$ and the induced embedding
\begin{align}\label{Iwahoriinclusion}
\cind_{I_\p}^{G_\p}(\Co)\into \cind_{I_\p}^{G_\p}(\An^{n}_{\cf})
\end{align}
is equivariant with respect to the operators $U_t$, $t\in T^{-}.$

The operators $U_t\in \End_{G_\p}(\cind_{I_\p}^{G_\p}(\Omega))$ are invertible.
Moreover, by the Bernstein decomposition the map
$$\Omega[X_i^{\pm 1} | i\in \Delta]\too \End_{G_\p}(\cind_{I_\p}^{G_\p}(\Omega)),\ X_i\mapstoo U_{t_i}$$
is injective and $\End_{G_\p}(\cind_{I_\p}^{G_\p}(\Omega))$ is a free-module of finite rank (and therefore flat) over $\Omega[X_i^{\pm 1} | i\in \Delta]$ (see for example \cite{ReederIwahori} for more details).
Since $\cind_{I_\p}^{G_\p}(\Omega)$ is a flat module over its endomorphism algebra by a theorem of Borel (see \cite{BorelIwahori}, Theorem 4.10), it is thus also flat as a $\Omega[X_i^{\pm 1} | i\in \Delta]$-module.

Therefore, for any choice of elements $a_i\in \Omega^{\times}$ the Koszul complex
$$\Lambda^{\bullet}_\Omega(\Omega^{\Delta}) \otimes_\Omega \cind_{I_\p}^{G_\p}(\Omega)$$
associated with the regular sequence $y_i=U_{t_i}-a_i$, $i\in \Delta$, is a resolution of the $G_\p$-representation
$$ M_{\underline{a}}=\Coker \left(\bigoplus_{i \in \Delta} \cind_{I_\p}^{G_\p}(\Omega) \xrightarrow{(y_{i})_{i\in \Delta}}\cind_{I_\p}^{G_\p}(\Omega)\right).$$

Let $\chi_{\underline{a}}\colon T\to \Omega^{\ast}$ be the unique smooth unramified character such that $\chi_{\underline{a}}(t_i)=a_i.$
As before, we extend $\chi_{\underline{a}}$ to a character of $B$.
Let $\phi\in i^{\infty}_B(\chi_{\underline{a}})$ be the unique element such that
\begin{itemize}
\item $\phi$ is invariant under $I_\p$,
\item the support of $\phi$ is $BI_\p$ and
\item $\phi(1)=1$.
\end{itemize}
Note that in case $\chi_{\underline{a}}=\cf$ is the trivial character the image of $\phi$ under the quotient map $i^{\infty}_B(\Omega)\onto \St^{\infty}(\Omega)$ is non-zero and, thus, generates the space of Iwahori-invariants of the Steinberg representation.

By Frobenius reciprocity $\phi$ induces a $G_\p$-equivariant homomorphism
\begin{align}\label{smoothaug}
\cind_{I_\p}^{G_\p}\Omega \too i^{\infty}_B(\chi_{\underline{a}}).
\end{align}
One can argue as in the proof of \cite{Ol}, Proposition 4.4, that the map \eqref{smoothaug} induces an isomorphism
$$M_{\underline{a}}\too i^{\infty}_B(\chi_{\underline{a}}).$$

In conclusion, we see that the augmented Koszul complex
\begin{align}\label{Koszulsmooth}
\Lambda^{\bullet}_A(A^{\Delta}) \otimes_A \cind_{I_\p}^{G_\p}(\Omega)\too i^{\infty}_B(\chi_{\underline{a}})\too 0
\end{align}
is exact.

\subsection{Overconvergent cohomology}\label{Overconvergent}
We show that the result from the previous section together with the theory of overconvergent cohomology allows us to construct cohomology classes with values in duals of locally analytic representations.

Before sticking to the case which is most relevant for our applications let us consider the general case: let $A$ be a $\Q_p$-affinoid algebra and $\chi\colon T\to A$ a locally analytic character.
Denote by $\chi_0$ its restriction to $T_0$ and fix elements $t_i\in T^{-}$, $i\in \Delta$, as before. 
For $n\geq n_{\chi_0}$ the continuous dual of the augmentation map \eqref{Koszulaug} of the previous section yields the map
$$\aug_\chi^{\ast}\colon\HH^{d}(G(F),\Fu^{\cont}_\Co(K^\p,\II^{\an}_{B}(\chi);A(\epsilon)))\too \HH^{d}(X_{K^\p\times I_\p},\Di^{n}_{\chi_0})^{\epsilon}$$
for every sign character $\epsilon$ and every integer $d\geq 0$.
We denote the operator induced by $U_{t_i}$, $i\in\Delta$, on the right hand side also by $U_{t_i}$ and similar for $U_{\tilde{t}}$.
Then, Proposition \ref{Koszulkernel} implies that
\begin{Cor}
We have:
$$\im (\aug_\chi^{\ast})\subseteq \bigcap_{i \in \Delta}\Ker (U_{t_i}-\chi(t_i)).$$
\end{Cor}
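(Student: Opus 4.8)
The plan is to deduce the statement purely formally from Proposition \ref{Koszulkernel} and the functoriality of cohomology. Recall that, by definition, $\HH^{d}(X_{K^\p\times I_\p},\Di^{n}_{\chi_0})^{\epsilon}=\HH^{d}(G(F),\Fu^{\cont}_{A}(K^\p,\cind_{I_\p}^{G_\p}(\An^{n}_{\chi_0});A(\epsilon)))$, and that the assignment $V\mapsto\HH^{d}(G(F),\Fu^{\cont}_{A}(K^\p,V;A(\epsilon)))$ is a contravariant functor from continuous $A[G_\p]$-modules to $A$-modules: a $G_\p$-equivariant continuous $A$-linear map $V\to V'$ induces a $G(F)$-equivariant continuous map $\Fu^{\cont}_{A}(K^\p,V';A(\epsilon))\to\Fu^{\cont}_{A}(K^\p,V;A(\epsilon))$, hence a map on group cohomology. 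Both the operators $U_{t_i}$ (which lie in $\End_{A[G_\p]}(\cind_{I_\p}^{G_\p}(\An^{n}_{\chi_0}))$) and the augmentation $\aug_\chi$ are morphisms of this kind; applying the functor to them produces, respectively, the endomorphism $U_{t_i}$ of $\HH^{d}(X_{K^\p\times I_\p},\Di^{n}_{\chi_0})^{\epsilon}$ (this is precisely how that operator was defined just above the statement) and the map $\aug_\chi^{\ast}$.

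First I would record the consequence of Proposition \ref{Koszulkernel} we need: for every $i\in\Delta$ one has $\im(y_i)\subseteq\sum_{j\in\Delta}\im(y_j)=\Ker(\aug_\chi)$, i.e.\ the composite $\cind_{I_\p}^{G_\p}(\An^{n}_{\chi_0})\xrightarrow{y_i}\cind_{I_\p}^{G_\p}(\An^{n}_{\chi_0})\xrightarrow{\aug_\chi}\II^{\an}_B(\chi)$ vanishes. Applying the (additive, contravariant) functor above to the identity $\aug_\chi\circ y_i=0$ yields $y_i^{\ast}\circ\aug_\chi^{\ast}=0$ on $\HH^{d}$, where $y_i^{\ast}$ denotes the endomorphism of $\HH^{d}(X_{K^\p\times I_\p},\Di^{n}_{\chi_0})^{\epsilon}$ induced by $y_i$. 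Since $y_i=U_{t_i}-\chi(t_i)$ as endomorphisms of $\cind_{I_\p}^{G_\p}(\An^{n}_{\chi_0})$, with $\chi(t_i)\in A^{\ast}$ acting as a scalar, and since passing to the induced endomorphism is $A$-linear, we get $y_i^{\ast}=U_{t_i}-\chi(t_i)$ as endomorphisms of $\HH^{d}(X_{K^\p\times I_\p},\Di^{n}_{\chi_0})^{\epsilon}$, in the notation of the corollary.

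Combining the last two points, $(U_{t_i}-\chi(t_i))\circ\aug_\chi^{\ast}=0$ for every $i\in\Delta$. Hence, for any class $c$ in the source of $\aug_\chi^{\ast}$ and any $i\in\Delta$ we have $\aug_\chi^{\ast}(c)\in\Ker(U_{t_i}-\chi(t_i))$, and therefore $\aug_\chi^{\ast}(c)\in\bigcap_{i\in\Delta}\Ker(U_{t_i}-\chi(t_i))$, which is the assertion.

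The argument is entirely formal, so there is no genuine obstacle. The only points that demand a little care are bookkeeping ones: keeping track of variances, so that the factorization $\aug_\chi\circ y_i=0$ turns into $y_i^{\ast}\circ\aug_\chi^{\ast}=0$ in the correct order after applying $\Fu^{\cont}_{A}(K^\p,-;A(\epsilon))$ and then $\HH^{d}(G(F),-)$; and confirming that the Hecke operator written $U_{t_i}$ on $\HH^{d}(X_{K^\p\times I_\p},\Di^{n}_{\chi_0})^{\epsilon}$ really is the one functorially induced by the endomorphism $U_{t_i}$ of the compact induction, which is exactly how it was introduced in the paragraph preceding the statement.
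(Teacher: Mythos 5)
Your argument is correct and is exactly the intended one: the paper derives the corollary directly from Proposition \ref{Koszulkernel}, since $\im(y_i)\subseteq\Ker(\aug_\chi)$ gives $\aug_\chi\circ y_i=0$, and applying the contravariant functor $V\mapsto\HH^{d}(G(F),\Fu^{\cont}_{A}(K^\p,V;A(\epsilon)))$ yields $(U_{t_i}-\chi(t_i))\circ\aug_\chi^{\ast}=0$, with $U_{t_i}$ on cohomology being by definition the functorially induced operator. Your bookkeeping of variances and of the scalar $\chi(t_i)$ matches the paper's conventions, so there is nothing to add.
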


For the remainder of the section we study the case of the trivial character $\cf=\cf_{T}\colon T\to \Co^{\ast}$ of $T$ with values in the units of the $p$-adic field $\Co$.
We denote its restriction to $T_0$ also by $\cf$.
Let $n\geq 1$ be any integer.
The map \eqref{Iwahoriinclusion} induces a homomorphism
\begin{align}\label{classical}
\HH^{d}(X_{K^\p\times I_\p},\Di^{n}_{\cf})^{\epsilon}
\too \HH^{d}(X_{K^\p\times I_\p},\Co)^{\epsilon}
\end{align}
in cohomology, that is equivariant with respect to the commuting actions of the Hecke-algebra $\T^{\p}$ and the operators $U_t$, $t \in T^{-}$.

In the following we study the finite slope parts of the above cohomology groups (see for example \cite{Hansen}, Section 2.3, for definitions and notations).

\begin{Thm}[Ash--Stevens, Urban, Hansen]\label{overconv}
Let $\tilde{t}=\prod_{i\in\Delta}t_i.$
\begin{enumerate}[(i)]
\item\label{overconv1} The space $\HH^{d}(X_{K^\p\times I_\p},\Di^{n}_{\cf})^{\epsilon}$ admits a slope decomposition with respect to $U_{\tilde{t}}$ and every rational number $h$. 
\item\label{overconv2} If $h$ is small enough with respect to $\tilde{t}$ and the trivial character $\cf$ (see for example the first formula on page 1690 of \cite{Ur} or equation $(21)$ of \cite{AshSt}), then the map
$$
\HH^{d}(X_{K^\p\times I_\p},\Di^{n}_{\cf})^{\epsilon,\leq h}  \too \HH^{d}(X_{K^\p\times I_\p},\Co)^{\epsilon,\leq h}.
$$
is an isomorphism.
In particular, this is an isomorphism for $h=0$.
\end{enumerate}
\end{Thm}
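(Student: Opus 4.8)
The plan is to reduce both parts of the theorem to the standard machinery of overconvergent (finite-slope) cohomology — specifically, to the slope decomposition and control theorems of Ash–Stevens, Urban and Hansen — by setting up the right Fredholm/compact operator and checking that the coefficient modules $\Di^n_{\cf}$ fit into their framework. The element $\tilde t=\prod_{i\in\Delta}t_i$ lies in $T^{-}$, and moving deeper into $T^{-}$ strictly contracts $I_\p\cap\overline N$; this is exactly what makes $U_{\tilde t}$ act \emph{compactly} on $\HH^{d}(X_{K^\p\times I_\p},\Di^{n}_{\cf})^{\epsilon}$. The first thing I would do is record this compactness: the locally analytic functions $\An^n_{\cf}$ (and their duals $\Di^n_{\cf}$) form a potentially orthonormalizable $\Co$-Banach module, the complex computing $\HH^{\bullet}(X_{K^\p\times I_\p},\Di^n_{\cf})$ is a complex of such modules (after choosing a contractible-cover / Borel–Serre model for the locally symmetric space, so the complex is finite in length and each term is a finite direct sum of copies of $\Di^n_{\cf}$), and $U_{\tilde t}$ is induced by a compact operator on this complex. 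Then the abstract slope-decomposition formalism (Ash–Stevens, Coleman's spectral theory, Hansen's Section 2.3) applies verbatim and gives part \eqref{overconv1}: for every $h\in\Q$ the cohomology decomposes as $(-)^{\leq h}\oplus(-)^{>h}$, with $(-)^{\leq h}$ finite-dimensional over $\Co$.

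For part \eqref{overconv2}, the plan is to compare $\Di^n_{\cf}$ with its "classical quotient''. The inclusion $\Co\hookrightarrow\An^n_{\cf}$ of constant functions dualizes to a surjection $\Di^n_{\cf}\twoheadrightarrow\Co$ of $I_\p$-modules, equivariant for all the $U_t$, $t\in T^{-}$; on cohomology this is the map \eqref{classical}. Its kernel $\Ker(\Di^n_{\cf}\twoheadrightarrow\Co)$ is, after the usual exact-sequence argument, an inductive limit of pieces on which $U_{\tilde t}$ acts with slope bounded strictly below by a positive constant $h_0$ determined by $\tilde t$ and the trivial weight (this is precisely the numerology in "the first formula on page 1690 of \cite{Ur}'' or "equation (21) of \cite{AshSt}'': the constant governs by how much $U_{\tilde t}$ divides on the "non-constant'' directions of the analytic functions, equivalently how much it multiplies on the dual side). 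Concretely, $\An^n_{\cf}$ restricted to $I_\p\cap\overline N$ is an algebra of convergent power series, the constants are the degree-$0$ part, and $U_{\tilde t}$ scales the degree-$j$ part by (a power of) the uniformizer to the $j$-th — so on the augmentation kernel every slope is $\geq h_0>0$. Hence for $h<h_0$ the long exact sequence in cohomology, restricted to slope $\leq h$ and using that slope-$\leq h$ is exact (it is a direct summand functor), kills the contribution of the kernel and its shift, and \eqref{classical} becomes an isomorphism on $(-)^{\leq h}$; taking $h=0<h_0$ gives the final assertion.

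The step I expect to be the main obstacle is making the numerology of part \eqref{overconv2} precise and self-contained in our slightly non-standard setup: we are working with a general split adjoint group $G_{F_\p}$ rather than $\GL_2$ or $\GL_n$, with the specific Iwahori $I_\p$ and the specific chosen elements $t_i\in T^{-}$, and with coefficients $\Di^n_{\cf}$ built from the Kohlhaase–Schraen analytic induction. I would need to pin down the exact action of $U_{\tilde t}$ on the filtration of $\An^n_{\cf}$ by "order of vanishing along $I_\p\cap\overline N$'' (equivalently the filtration coming from the congruence subgroups $I_\p^m$), show it is compatible with the $U_{\tilde t}$-action, and extract the precise positive lower bound $h_0$ on slopes of the non-constant part; once that is in hand the homological algebra is formal. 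Everything else — the Banach-module structure, compactness of $U_{\tilde t}$, existence of slope decompositions, exactness of the slope-$\leq h$ functor — is either in \cite{AshSt}, \cite{Ur}, \cite{Hansen} or a routine adaptation thereof, and the point $h=0$ is then immediate since $h_0>0$.
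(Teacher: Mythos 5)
Your proposal is correct and follows the same approach the paper relies on: the paper's proof of this theorem is purely a citation to Section 2.3 and Theorem 3.2.5 of Hansen and Proposition 4.3.10 of Urban, and your sketch is precisely an unpacking of the standard slope-decomposition/control machinery from those references (compactness of $U_{\tilde t}$ on a Banach complex for (i), and the ``kernel has strictly positive slopes'' long-exact-sequence argument for (ii)). One small caveat worth recording is that at general weights the sharp critical-slope bound in Hansen and Urban is obtained via a locally analytic BGG/theta-operator argument rather than the crude degree filtration you describe, but at the trivial weight the two agree and your filtration argument does produce a positive $h_0$, so the claim as stated -- and in particular the $h=0$ case actually used in the paper -- is fine.
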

\begin{proof}
For the first claim see Section 2.3 of \cite{Hansen} and for the second one see \cite{Hansen}, Theorem 3.2.5.
The third claim is \cite{Ur}, Proposition 4.3.10.
Note that in all cases the authors consider all primes lying above $p$ at once.
The same proofs work in our partial $\p$-adic setup.
\end{proof}

Composing the dual of the augmentation map
\begin{align}
\aug_\cf^{\ast}\colon \HH^{d}(G(F),\Fu^{\cont}_\Co(K^\p,\II^{\an}_{B}(\Co);\Co(\epsilon)))\too \HH^{d}(X_{K^\p\times I_\p},\Di^{n}_{\cf})^{\epsilon}
\end{align}
with the map \eqref{classical} yields the map
\begin{align}\label{principallift}
\HH^{d}(G(F),\Fu^{\cont}_\Co(K^\p,\II^{\an}_{B}(\Co);\Co(\epsilon)))\too \HH^{d}(X_{K^\p\times I_\p},\Co)^{\epsilon}.
\end{align}

As the diagram
\begin{center}
\begin{tikzpicture}
    \path 	
	  (0,1.5) 	node[name=A]{$\cind_{I_\p}^{G_\p}\Co$}
		(0,0) 	node[name=C]{$\St_{G_\p}^{\infty}(\Co)$}
		(6,1.5) 	node[name=B]{$\II^{\an}_{B}(\Co)$}
		(6,0) 	node[name=D]{$\St_{G_\p}^{\cont}(\Co)$}
		(3,1.5) node[name=E]{$\cind_{I_\p}^{G_\p} \An^{n}_{\cf}$};
    \draw[->] (A) -- (E) node[midway, above]{\eqref{Iwahoriinclusion}};
		\draw[->] (E) -- (B) node[midway, above]{$\aug_\cf$};
    \draw[->] (A) -- (C) node[midway, left]{\eqref{evaluationzero}};
		\draw[->] (C) -- (D) ;
		\draw[->] (B) -- (D) ;
\end{tikzpicture} 
\end{center}
is commutative up to multiplication with a non-zero constant, which comes from the choice of a Iwahori-fixed vector in \eqref{evaluationzero}, so is the induced diagram in cohomology: 
\begin{center}
\begin{tikzpicture}
    \path 	
	  (0,1.5) 	node[name=A]{$\HH^{d}(G(F),\Fu^{\cont}_\Co(K^\p,\II^{\an}_{B}(\Co);\Co(\epsilon)))$}
		(0,0) 	node[name=C]{$\HH^{d}(G(F),\Fu^{\cont}_\Co(K^\p,\St_{G_\p}^{\cont}(\Co);\Co(\epsilon)))$}
		(6.5,1.5) 	node[name=B]{$\HH^{d}(X_{K^\p\times I_\p},\Co)^{\epsilon}$}
		(6.5,0) 	node[name=D]{$\HH^{d}(G(F),\Fu^{\cont}_\Co(K^\p,\St_{G_\p}^{\infty}(\Co);\Co(\epsilon)))$};
    \draw[->] (A) -- (B) node[midway, above]{\eqref{principallift}};
    \draw[->] (C) -- (A) ;
		\draw[->] (C) -- (D) node[midway, above]{$\cong$};;
		\draw[->] (D) -- (B) node[midway, right]{$\ev^{(d)}$};;
\end{tikzpicture} 
\end{center}

\begin{Pro}\label{principalisom}
Let $a_i\in E^\times$, $i\in\Delta$, be elements with $p$-adic valuation equal to $0$ and $\chi_{\underline{a}}\colon T\to E^\times$ the unique smooth unramified character such that $\chi_{\underline{a}}(t_i)=a_i.$
The natural inclusion $i^{\infty}_{B}(\chi_{\underline{a}})\into\II^{\an}_{B}(\chi_{\underline{a}})$ induces an isomorphism
$$\HH^{d}(G(F),\Fu^{\cont}_\Co(K^\p,\II^{\an}_{B}(\chi_{\underline{a}});\Co(\epsilon)))\xrightarrow{\cong} \HH^{d}(G(F),\Fu_\Co(K^\p,i^{\infty}_{B}(\chi_{\underline{a}});\Co(\epsilon)))$$
for all $d\geq 0$.
In particular, the map \eqref{principallift} induces an isomorphism
$$\HH^{q}(G(F),\Fu^{\cont}_\Co(K^\p,\II^{\an}_{B}(\Co);\Co(\epsilon)))[\pi^\p]\xrightarrow{\cong} \HH^{q}(X_{K^\p\times I_\p},\Co)^{\epsilon}[\pi]$$
on $\pi$-isotypic components in degree $q$.
\end{Pro}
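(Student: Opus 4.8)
The plan is to deduce the proposition from the two Koszul resolutions of Section~\ref{Koszul}, the control theorem (Theorem~\ref{overconv}), and the diagrams already at hand. Since $\chi_{\underline{a}}$ is unramified its restriction to $T_0$ is the trivial character $\cf$, so Theorem~\ref{Koszulres} provides a resolution $\Lambda^{\bullet}_\Co(\Co^{\Delta})\otimes_\Co\cind_{I_\p}^{G_\p}(\An^{n}_{\cf})\too\II^{\an}_B(\chi_{\underline{a}})\too 0$ whose Koszul differentials are built from $y_i=U_{t_i}-\chi_{\underline{a}}(t_i)=U_{t_i}-a_i$, while \eqref{Koszulsmooth} provides the analogous resolution $\Lambda^{\bullet}_\Co(\Co^{\Delta})\otimes_\Co\cind_{I_\p}^{G_\p}(\Co)\too i^{\infty}_B(\chi_{\underline{a}})\too 0$ with the same differentials. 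I would first note that the inclusion \eqref{Iwahoriinclusion} is equivariant for the operators $U_t$, $t\in T^{-}$, and is compatible up to a nonzero scalar with the augmentation maps \eqref{Koszulaug} and \eqref{smoothaug} (both composites $\cind_{I_\p}^{G_\p}(\Co)\too\II^{\an}_B(\chi_{\underline{a}})$ are, by Frobenius reciprocity, determined by an $I_\p$-fixed vector supported on $BI_\p$, and such vectors form a line); hence it induces a morphism of resolutions lying over the natural inclusion $i^{\infty}_B(\chi_{\underline{a}})\into\II^{\an}_B(\chi_{\underline{a}})$. Applying $\HH^{\bullet}(G(F),\Fu^{\cont}_\Co(K^\p,-;\Co(\epsilon)))$ — using $\HH^{\bullet}(G(F),\Fu^{\cont}_\Co(K^\p,\cind_{I_\p}^{G_\p}V;\Co(\epsilon)))\cong\HH^{\bullet}(X_{K^\p\times I_\p},\Hom^{\cont}_\Co(V,\Co))^{\epsilon}$ and the fact that the computing cochain complex is exact-contravariant in $V$ on strict exact sequences (strictness being part of the construction of \cite{KoSch}) — produces a map of hypercohomology spectral sequences
$$E_1^{s,t}=\Lambda^{s}_\Co(\Co^{\Delta})^{\ast}\otimes_\Co\HH^{t}(X_{K^\p\times I_\p},\Di^{n}_{\cf})^{\epsilon}\ \Longrightarrow\ \HH^{s+t}(G(F),\Fu^{\cont}_\Co(K^\p,\II^{\an}_B(\chi_{\underline{a}});\Co(\epsilon)))$$
to the analogous one with $\Di^{n}_{\cf}$ replaced by $\Co$ and abutment $\HH^{s+t}(G(F),\Fu_\Co(K^\p,i^{\infty}_B(\chi_{\underline{a}});\Co(\epsilon)))$; here $d_1$ is the Koszul differential attached to $(U_{t_i}-a_i)_{i\in\Delta}$ and the map of $E_1$-pages is $\id\otimes\eqref{classical}$.

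Next I would decompose everything with respect to the commuting operator $U_{\tilde{t}}$, $\tilde{t}=\prod_{i\in\Delta}t_i$, using Theorem~\ref{overconv}(i) at the cutoff $0$. On the slope-$>0$ part the operator $U_{\tilde{t}}-\prod_i a_i$ is invertible, since $\prod_i a_i$ is a $p$-adic unit; moreover $U_{\tilde{t}}-\prod_i a_i=\prod_i U_{t_i}-\prod_i a_i$ lies in the ideal generated by the $y_i=U_{t_i}-a_i$ with coefficients (polynomials in the $U_{t_j}$) commuting with all the $y_j$, so the usual contracting-homotopy argument for Koszul complexes shows that the Koszul complex of $(y_i)_i$ is acyclic on the slope-$>0$ parts of $\HH^{t}(X_{K^\p\times I_\p},\Di^{n}_{\cf})^{\epsilon}$ and of $\HH^{t}(X_{K^\p\times I_\p},\Co)^{\epsilon}$. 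Hence both spectral sequences, and therefore both abutments (which inherit a $U_{\tilde t}$-slope decomposition from the finitely many nonzero $E_1$-terms), are concentrated in slope $\leq 0$. On the slope-$\leq 0$ part the map \eqref{classical} is an isomorphism by Theorem~\ref{overconv}(ii), so the map of $E_1$-pages is an isomorphism there, and consequently the induced map of abutments is an isomorphism in every degree. This is the first assertion.

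For the $\pi$-isotypic statement I would specialize to $\chi_{\underline{a}}=\cf$ and chain four identifications on $\pi^{\p}$-isotypic parts: the first assertion gives $\HH^{q}(G(F),\Fu^{\cont}_\Co(K^\p,\II^{\an}_B(\Co);\Co(\epsilon)))[\pi^{\p}]\cong\HH^{q}(G(F),\Fu_\Co(K^\p,i^{\infty}_B(\cf);\Co(\epsilon)))[\pi^{\p}]$; Lemma~\ref{injectivity} with $J=\emptyset$ in degree $q$ identifies the latter with $\HH^{q}(G(F),\Fu_\Co(K^\p,\St^{\infty}_{G_\p};\Co(\epsilon)))[\pi^{\p}]$; Corollary~\ref{automatic2} identifies this with $\HH^{q}(G(F),\Fu^{\cont}_\Co(K^\p,\St^{\cont}_{G_\p};\Co(\epsilon)))[\pi^{\p}]$; and Proposition~\ref{componentpro} says $\ev^{(q)}$ identifies $\HH^{q}(G(F),\Fu_\Co(K^\p,\St^{\infty}_{G_\p};\Co(\epsilon)))[\pi^{\p}]$ with $\HH^{q}(X_{K^\p\times I_\p},\Co)^{\epsilon}[\pi]$. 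In particular all of these finite-dimensional $\Co$-vector spaces have the same dimension. On the other hand, the commutative diagram in cohomology displayed just before the statement shows that \eqref{principallift} precomposed with the map $\HH^{q}(G(F),\Fu^{\cont}_\Co(K^\p,\St^{\cont}_{G_\p};\Co(\epsilon)))\too\HH^{q}(G(F),\Fu^{\cont}_\Co(K^\p,\II^{\an}_B(\Co);\Co(\epsilon)))$ equals $\ev^{(q)}$ precomposed with the isomorphism of Corollary~\ref{automatic2}; restricted to $\pi^{\p}$-isotypic parts the latter composite is an isomorphism onto $\HH^{q}(X_{K^\p\times I_\p},\Co)^{\epsilon}[\pi]$, hence \eqref{principallift} is surjective on $\pi^{\p}$-isotypic parts, and a surjection of $\Co$-vector spaces of equal finite dimension is an isomorphism.

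The step I expect to be the main obstacle is the slope-decomposition argument: upgrading the \emph{small-slope} comparison of overconvergent with classical cohomology (Theorem~\ref{overconv}(ii)) to an isomorphism valid in \emph{every} cohomological degree. This is precisely where the Koszul resolution of Kohlhaase--Schraen is indispensable — it is the vanishing of the Koszul cohomology of $(U_{t_i}-a_i)_i$ on the positive-slope part, i.e. the fact that the non-classical part of overconvergent cohomology is supported away from the Hecke point $\underline{a}$, that confines $\HH^{\bullet}(G(F),\Fu^{\cont}_\Co(K^\p,\II^{\an}_B(\chi_{\underline{a}});\Co(\epsilon)))$ to slopes $\leq 0$. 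A secondary technical point, glossed over above, is the exactness of the cochain complex computing $\HH^{\bullet}(G(F),\Fu^{\cont}_\Co(K^\p,\cind_{I_\p}^{G_\p}(-);\Co(\epsilon)))$ as a contravariant functor on the relevant topological $G_\p$-modules, which rests on the strictness of the Koszul complexes of \cite{KoSch}.
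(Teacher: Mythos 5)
Your argument is correct and follows essentially the same route as the paper: dualize the analytic and smooth Koszul resolutions, reduce via the (hyper)cohomology spectral sequences to comparing the Koszul complexes built on $\HH^{\bullet}(X_{K^\p\times I_\p},\Di^{n}_{\cf})^{\epsilon}$ and $\HH^{\bullet}(X_{K^\p\times I_\p},\Co)^{\epsilon}$, use Theorem \ref{overconv}(ii) on the slope-$\leq 0$ part, and kill the slope-$>0$ part by observing that $U_{\tilde t}-\prod_i a_i$ acts both invertibly and through the ideal generated by the $y_i$. Your more explicit deduction of the isotypic statement via Lemma \ref{injectivity}, Corollary \ref{automatic2}, Proposition \ref{componentpro} and a dimension count is exactly the paper's intended (unspelled) argument.
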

\begin{proof}
The second claim is a direct consequence of the first claim and Lemma \ref{injectivity}.

Given an $I_\p$-representation $M$ we denote by $\Coind_{I_\p}^{G_\p}M$ the coinduction of $M$ to $G_\p$, i.e.~the module of all functions $f\colon G_\p\to M$ such that $f(kg)=kf(g)$ for all $k\in I_\p$, $g\in G_\p$.
By taking continuous duals the Koszul complex of Theorem \ref{Koszulres} with $y_i=U_{t_i}-a_i$ yields a quasi-isomorphism
$$\Hom_E^{\cont}(\II^{\an}_{B}(\chi_{\underline{a}}),E) \xrightarrow{\cong} \Lambda^{\bullet}_\Co(\Co^{\Delta}) \otimes_\Co \Coind_{I_\p}^{G_\p}(\Di^{n}_{\cf})$$
of complexes.
Note that taking continuous duals in this case is exact by the Hahn--Banach theorem.
Similarly by the smooth Koszul resolution \eqref{Koszulsmooth} we have a quasi-isomorphism
$$\Hom_E(i^{\infty}_{B}(\chi_{\underline{a}}),E) \xrightarrow{\cong} \Lambda^{\bullet}_\Co(\Co^{\Delta}) \otimes_\Co \Coind_{I_\p}^{G_\p}(\Co)$$
and the canonical diagram
\begin{center}
\begin{tikzpicture}
    \path 	
	  (0,1.5) 	node[name=A]{$\Hom_E^{\cont}(\II^{\an}_{B}(\chi_{\underline{a}}),E)$}
		(4,1.5) 	node[name=C]{$\Lambda^{\bullet}_\Co(\Co^{\Delta}) \otimes_\Co \Coind_{I_\p}^{G_\p}(\Di^{n}_{\cf})$}
		(0,0) 	node[name=B]{$\Hom_E(i^{\infty}_{B}(\chi_{\underline{a}}),E)$}
		(4,0) 	node[name=D]{$\Lambda^{\bullet}_\Co(\Co^{\Delta}) \otimes_\Co \Coind_{I_\p}^{G_\p}(\Co)$};
    \draw[->] (A) -- (B) ;
    \draw[->] (A) -- (C) ;
		\draw[->] (C) -- (D) ;
		\draw[->] (B) -- (D) ;
\end{tikzpicture} 
\end{center}
is commutative.

From the associated spectral sequences for double complex we deduce that it is enough to prove that the map of complexes
\begin{align}\label{qisom}
\Lambda^{\bullet}_\Co(\Co^{\Delta})\otimes_\Co \HH^{d}(X_{K^\p\times I_\p},\Di^{n}_{\cf}) \too \Lambda^{\bullet}_\Co(\Co^{\Delta})\otimes_\Co \HH^{d}(X_{K^\p\times I_\p},\Co)
\end{align}
is a quasi-isomorphism for all $d\geq 0.$
The modules on the left hand side admit a slope decomposition for $U_{\tilde{t}}$ by Theorem \ref{overconv} \eqref{overconv1}, while the modules on the right hand side admit a slope decomposition since they are finite-dimensional $\Co$-vector spaces.

Since the operators $y_i$ commute with $U_{\tilde{t}}$ they respect the slope decomposition on both sides (see \cite{Ur}, Lemma 2.3.2).
On the slope less or equal than $0$ part the map $\eqref{qisom}$ is even an isomorphism of complexes by Theorem \ref{overconv} \eqref{overconv2}.

Thus, we are reduced to prove that
\begin{align*}
\Lambda^{\bullet}_\Co(\Co^{\Delta})\otimes_\Co \HH^{d}(X_{K^\p\times I_\p},\Di^{n}_{\cf})^{>h} \too \Lambda^{\bullet}_\Co(\Co^{\Delta})\otimes_\Co \HH^{d}(X_{K^\p\times I_\p},\Co)^{>h}
\end{align*}
is a quasi-isomorphism for all $d\geq 0.$.

In fact, both sides are acyclic:
standard properties of Koszul complexes imply that the operators $y_i$ act as multiplication by zero on the cohomology of the complexes.
By the definition of a slope decomposition the operator $U_{\tilde{t}}-\prod_{i\in\Delta} a_i$ acts via isomorphisms on both complexes and thus on their cohomology.
But since $U_{\tilde{t}}-\prod_{i\in\Delta} a_i$ lies in the ideal generated by the operators $y_i$ it also acts via multiplication by zero, which proves the claim.
\end{proof}

\begin{Rem}
In order to keep the article short and avoid unnecessary notation we decided to stick to the special case above.
But Proposition \ref{principalisom} also holds in a much more general situation, e.g.~one can allow arbitrary coefficient systems and arbitrary non-critical principal series representations, given that the slope is small enough w.r.t. $\tilde{t}$.
\end{Rem}

\subsection{Overconvergent families}\label{Families}

Let $\Ws$ be the weight space of $T$, i.e.~it is the rigid space over $\Co$ such that for every $\Co$-affinoid algebra $A$ its $A$-points are given by
$$
\Ws(A)=\Hom^{\cont}(T,A^{\ast}).
$$
It is smaller then the usual weight space as we only consider characters of the torus in $G_{F_\p}$.
For any open affinoid $\mathcal{U} \subset \Ws$ and we let $\chi_{\mathcal{U}}$ be the corresponding universal weight.
As in Section \ref{Koszul} we define the space of $n$-analytic functions and distributions $\An_{\chi_{\mathcal{U}}}^{n}$ and $\Di^n_{\chi_{\mathcal{U}}}$ for $n\gg 0$.

We suppose now that the group $G_\infty$ has discrete series or, equivalently, that $\delta=0$ (see \cite{Knapp}, Theorem 12.20 for the equivalence of these two properties.).
Thus by Proposition \ref{componentpro}, the representation $\pi^{\infty}$ appears only in the middle degree cohomology $\HH^q(X_{K^{\p}\times I_{\p}}, \Co)$.
Hence, we put $\LI_{i}(\pi,\p)^{\epsilon}=\LI_{i}^{(q)}(\pi,\p)^{\epsilon}$.

Let $\T_{\sph}^{\p}\subseteq \T^{\p}$ be the spherical Hecke algebra of level $K^{\p}\times I_{\p}$ over $\Co$, i.e.~the commutative Hecke algebra generated by all Hecke operators at finite places $v$ such that $K_v$ is hyperspecial.
We define $\T_{\sph}$ to be the commutative algebra generated by $\T_{\sph}$ and all $U_t$-operators for $t\in T^{-}.$
Let $\m_\pi\subseteq \T_{\sph}$ (resp.~$\m_\pi^{\p}\subseteq \T_{\sph}^{\p}$) the maximal ideal associated with $\pi.$
We assume the following weak non-Eisenstein assumption on the maximal ideal $\m_\pi$ throughout this section.
\begin{Hyp}[NE]\label{Hyp2}
We have
$$\HH^d(X_{K^{\p}\times I_{\p}}, \Co)_{\m_\pi}=0$$
unless $d=q$.
\end{Hyp}
\begin{Exa}
If the group $G$ is definite, the hypothesis is automatically true.
By strong multiplicity one and the Jacquet--Langlands correspondence it is also true for inner forms of $\PGL_2$ over totally real number fields.
\end{Exa}

Let $\mathcal{U}$ be an open affinoid of $\Ws$ and define $\mathcal{O}_{\mathcal{U},\cf}$ to be the rigid localisation of $\mathcal{O}_{\Ws}(\mathcal{U}) $ at the weight $\cf$ (which is the cohomological weight of $\pi$), i.e.~
$$
\mathcal{O}_{\mathcal{U},\cf}= \varinjlim_{\cf  \in  \mathcal{U}' \subset \mathcal{U}} \mathcal{O}_{\Ws}(\mathcal{U}').
$$
The following theorem is instrumental in calculating $\LI$-invariants.

\begin{Thm}\label{thm:locfree}
After localisation at the ideal $\m_{\pi}$ of $\T_{\sph}$ and restricting to a small enough open affinoid $\mathcal{U}$ containing $\cf$ the canonical reduction map
$$
\HH^q (X_{K^{\p}\times I_{\p}}, \Di^{n}_{\chi_{\mathcal{U}}} )^{\epsilon}_{\m_{\pi}}\rightarrow \HH^q(X_{K^{\p}\times I_{\p}}, \Co)^{\epsilon}_{\m_{\pi}}
$$
is surjective.
Moreover, $\HH^q (X_{K^{\p}\times I_{\p}}, \Di^{n}_{\chi_{\mathcal{U}}} )^{\epsilon}_{\m_\pi}$ is a free $\mathcal{O}_{\mathcal{U},\cf}$-module of rank equal to the dimension of $\HH^q(X_{K^{\p}\times I_{\p}}, \Co)^{\epsilon}_{\m_{\pi}}$.

In addition, we have
$\HH^d (X_{K^{\p}\times I_{\p}}, \Di^{n}_{\chi_{\mathcal{U}}} )^{\epsilon}_{\m_{\pi}}=0$
for all $d \neq q$.
\end{Thm}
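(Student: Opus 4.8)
The plan is to deduce Theorem \ref{thm:locfree} from the general machinery of overconvergent cohomology (Ash--Stevens, Urban, Hansen) combined with the non-Eisenstein hypothesis (NE) and a standard base-change/spectral-sequence argument. First I would recall that, since $\delta = 0$, the representation $\pi$ contributes only in middle degree $q$, and by Theorem \ref{overconv}\eqref{overconv1} the complex computing $\HH^{\bullet}(X_{K^{\p}\times I_{\p}}, \Di^{n}_{\chi_{\mathcal{U}}})^{\epsilon}$ admits a slope decomposition with respect to $U_{\tilde{t}}$ over any sufficiently small affinoid $\mathcal{U}$. Localising at $\m_{\pi}$ picks out a summand on which $U_{\tilde{t}}$ has slope $0$, so after shrinking $\mathcal{U}$ the cohomology $\HH^{\bullet}(X_{K^{\p}\times I_{\p}}, \Di^{n}_{\chi_{\mathcal{U}}})^{\epsilon}_{\m_{\pi}}$ is computed by a bounded complex of finite projective $\mathcal{O}_{\Ws}(\mathcal{U})$-modules (the slope-$\leq h$ part of a complex of orthonormalisable Banach modules with a completely continuous operator), to which we may apply base change along the maximal ideal corresponding to $\cf$.

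Next I would invoke the control theorem, Theorem \ref{overconv}\eqref{overconv2}: specialising the universal character $\chi_{\mathcal{U}}$ at the weight $\cf$ yields, on slope-$0$ parts, an isomorphism $\HH^{d}(X_{K^{\p}\times I_{\p}}, \Di^{n}_{\cf})^{\epsilon, \leq 0} \cong \HH^{d}(X_{K^{\p}\times I_{\p}}, \Co)^{\epsilon, \leq 0}$ in every degree $d$. Combined with (NE), which says $\HH^{d}(X_{K^{\p}\times I_{\p}}, \Co)_{\m_{\pi}} = 0$ for $d \neq q$, the base-changed complex $(\Di^{n}_{\chi_{\mathcal{U}}})^{\epsilon}_{\m_{\pi}} \otimes_{\mathcal{O}_{\Ws}(\mathcal{U})} \Co$ has cohomology concentrated in degree $q$. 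Now I would run the usual argument: a bounded complex of finite projective modules over a regular local ring (here $\mathcal{O}_{\mathcal{U},\cf}$, after localising) whose derived base change to the residue field is concentrated in a single degree $q$ is itself, up to quasi-isomorphism, a single finite projective module placed in degree $q$ --- this uses that $\mathrm{Tor}$ vanishing in the wrong degrees forces the complex to be minimal in the right degree, i.e.\ a Nakayama/minimal-resolution argument. Hence $\HH^{d}(X_{K^{\p}\times I_{\p}}, \Di^{n}_{\chi_{\mathcal{U}}})^{\epsilon}_{\m_{\pi}} = 0$ for $d \neq q$ and $\HH^{q}(X_{K^{\p}\times I_{\p}}, \Di^{n}_{\chi_{\mathcal{U}}})^{\epsilon}_{\m_{\pi}}$ is finite free over $\mathcal{O}_{\mathcal{U},\cf}$; its rank equals the rank of the base change, which by the control theorem is $\dim_{\Co}\HH^{q}(X_{K^{\p}\times I_{\p}}, \Co)^{\epsilon}_{\m_{\pi}}$. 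Surjectivity of the reduction map is then immediate from right-exactness of base change combined with the vanishing in degree $q+1$ that kills the relevant $\mathrm{Tor}^{1}$ obstruction.

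The step I expect to be the main obstacle is making precise the passage from "bounded complex of projective Banach/affinoid modules with a slope decomposition" to "perfect complex of finite projective $\mathcal{O}_{\Ws}(\mathcal{U})$-modules to which Tor-base-change applies", together with checking that the various localisations (rigid localisation $\mathcal{O}_{\mathcal{U},\cf}$ versus localisation at $\m_{\pi}$) and the shrinking of $\mathcal{U}$ are compatible. One must ensure $\mathcal{O}_{\mathcal{U},\cf}$ is (the localisation of) a regular, in fact a power series, ring so that the homological algebra goes through, and one must check that after base change to $\Co$ the maximal ideal $\m_{\pi}$ of $\T_{\sph}$ still detects exactly the $\pi$-part so that (NE) applies verbatim. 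All of this is standard in the Ash--Stevens--Urban--Hansen framework --- and indeed the referenced works essentially contain it --- so I would phrase the proof as an application of \cite{Hansen}, Theorem 3.2.5 and its surrounding formalism, spelling out only the degree-concentration argument using (NE) and $\delta=0$, and the freeness conclusion via the local criterion for flatness.
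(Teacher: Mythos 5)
Your proposal is correct and is essentially the paper's own argument: the paper simply cites \cite{BDJ}, Theorem 2.14, whose proof is exactly the mechanism you describe --- the slope-$0$ part of overconvergent cohomology is computed by a perfect complex of finite projective $\mathcal{O}_{\Ws}(\mathcal{U})$-modules, and the vanishing of classical cohomology outside middle degree (hypothesis (NE) together with $\delta=0$ and the control theorem) forces, via the minimal-complex/Nakayama argument, concentration in degree $q$, freeness over $\mathcal{O}_{\mathcal{U},\cf}$, and surjectivity of the reduction map. You have merely spelled out in detail what the paper delegates to that reference, so there is nothing substantive to add.
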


\begin{proof}
The case of $\PGL_2$ over a totally real field is proven in detail in \cite{BDJ}, Theorem 2.14.
The main ingredient in their proof is the vanishing of the cohomology outside middle degree.
Thus, the same proof works in our more general setup.
\end{proof}

\begin{Def}
We say that $\m_\pi$ is $\p$-étale (with respect to $\epsilon$) if every Hecke operator $h\in \T_{\sph}$ acts on $\HH^q (X_{K^{\p}\times I_{\p}}, \Di^{n}_{\chi_{\mathcal{U}}} )^{\epsilon}_{\m_{\pi}}$ as multiplication by an element $\alpha_h\in \mathcal{O}_{\mathcal{U},\cf}^{\ast}.$
\end{Def}

Theorem \ref{thm:locfree} immediately implies the following.
\begin{Cor}
Assume that $\dim_\Co \HH^q(X_{K^{\p}\times I_{\p}}, \Co)^{\epsilon}_{\m_{\pi}}=1$.
Then $\m_\pi$ is $\p$-étale.
\end{Cor}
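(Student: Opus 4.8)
The plan is to derive the corollary as a direct consequence of Theorem \ref{thm:locfree}, using the freeness statement together with the rank-one hypothesis. First I would invoke Theorem \ref{thm:locfree}: after localizing at $\m_\pi$ and shrinking $\mathcal{U}$ to a small enough affinoid neighbourhood of $\cf$, the module $\HH^q(X_{K^\p\times I_\p},\Di^n_{\chi_{\mathcal{U}}})^\epsilon_{\m_\pi}$ is free over $\mathcal{O}_{\mathcal{U},\cf}$ of rank equal to $\dim_\Co \HH^q(X_{K^\p\times I_\p},\Co)^\epsilon_{\m_\pi}$, which by hypothesis equals one. So $\HH^q(X_{K^\p\times I_\p},\Di^n_{\chi_{\mathcal{U}}})^\epsilon_{\m_\pi}$ is free of rank one over the local ring $\mathcal{O}_{\mathcal{U},\cf}$.

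Next I would use that any Hecke operator $h\in\T_{\sph}$ acts $\mathcal{O}_{\mathcal{U},\cf}$-linearly on this module (the $\mathcal{O}_{\mathcal{U},\cf}$-module structure comes from the weight-space variable and commutes with the Hecke action). An $\mathcal{O}_{\mathcal{U},\cf}$-linear endomorphism of a free rank-one module is multiplication by a scalar $\alpha_h\in\mathcal{O}_{\mathcal{U},\cf}$, so it only remains to check that $\alpha_h$ is a unit. For this I would pass to the residue field: reducing modulo the maximal ideal $\m_{\cf}$ of the local ring $\mathcal{O}_{\mathcal{U},\cf}$ gives, via the surjective reduction map of Theorem \ref{thm:locfree}, the classical cohomology group $\HH^q(X_{K^\p\times I_\p},\Co)^\epsilon_{\m_\pi}$, which is one-dimensional over $\Co$ and $\pi$-isotypic. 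The operator $h$ acts there by its $\pi$-eigenvalue, and since $\pi_\p$ is Steinberg and $K_v$ is hyperspecial wherever $\pi_v$ is spherical, this eigenvalue is a $p$-adic unit: for the spherical Hecke operators away from $\p$ this is the integrality of Satake parameters of the tempered representation $\pi$, which are units because $\pi_\p$ being Steinberg forces $\pi$ to have unitary central character and the relevant normalization makes the eigenvalues algebraic integers of absolute value controlled by temperedness; in particular they are nonzero mod $\p$. Hence $\alpha_h\bmod\m_{\cf}\neq 0$, so $\alpha_h\in\mathcal{O}_{\mathcal{U},\cf}^\ast$ since $\mathcal{O}_{\mathcal{U},\cf}$ is local, which is exactly the definition of $\p$-étaleness.

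The only genuine subtlety — hardly an obstacle but worth stating carefully — is the unit claim for the residual eigenvalues $\alpha_h\bmod\m_{\cf}$. One must make sure the chosen integral normalization of the Hecke operators in $\T_{\sph}$ (and in particular the $U_t$-operators entering $\T_{\sph}$) is such that $\pi$-eigenvalues are $p$-adic units; this is precisely the normalization already used implicitly in the definition of $\m_\pi$ and in the slope-decomposition formalism of Section \ref{Overconvergent}, where the relevant slope is $0$. Granting that normalization, the argument is a one-line application of the structure of endomorphisms of a rank-one free module over a local ring, and no further computation is needed.
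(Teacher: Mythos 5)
Your first paragraph is exactly the paper's argument: Theorem \ref{thm:locfree} gives that $\HH^q(X_{K^\p\times I_\p},\Di^n_{\chi_{\mathcal{U}}})^\epsilon_{\m_\pi}$ is free of rank one over the local ring $\mathcal{O}_{\mathcal{U},\cf}$, an $\mathcal{O}_{\mathcal{U},\cf}$-linear endomorphism of such a module is multiplication by a scalar, and the paper leaves the rest implicit. So the core of your argument matches the paper.

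The second paragraph, however, has a genuine error. You claim the residual eigenvalues $\alpha_h \bmod \m_\cf$ are $p$-adic units because $\pi$ is tempered, but the Ramanujan/temperedness bound controls the \emph{Archimedean} absolute values of the Satake parameters, and says nothing about their $p$-adic valuations. For a spherical Hecke operator $T_v$ at a place $v$ away from $\p$, the $\pi$-eigenvalue is an algebraic integer whose complex absolute value is bounded by temperedness, but it may perfectly well be divisible by $p$; for Hilbert or elliptic modular forms this happens frequently ($a_v\equiv 0\bmod p$ for many $v\neq p$). The only operators whose residual eigenvalues are units for structural reasons are the $U_{t_i}$, and there the reason is not temperedness but the fact that $\pi_\p=\St$ is $\p$-ordinary, so the relevant slope at $\cf$ is $0$; this is indeed what the paper uses in the proof of Theorem \ref{thm:Linvderivative} (\emph{``By the Steinberg hypothesis, $\alpha_{U_{t_i}}(\cf)$ is not vanishing.''}). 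Taken literally, the ``$\alpha_h\in\mathcal{O}_{\mathcal{U},\cf}^{\ast}$'' in the definition of $\p$-étale cannot be achieved for \emph{every} $h\in\T_{\sph}$ whenever some spherical eigenvalue away from $\p$ reduces to $0$ mod $p$; the intended reading is that the Hecke operators act by scalars in $\mathcal{O}_{\mathcal{U},\cf}$, with invertibility only needed (and automatic) for the $U_t$. You would be better served by simply noting that the scalar action follows from freeness of rank one and that the invertibility of $\alpha_{U_{t_i}}$ follows from ordinarity of the Steinberg, rather than appealing to a temperedness argument that does not control $p$-adic integrality.
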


\begin{Exa}
Suppose $G$ is an inner form of $\PGL_2$ over a totally real number field.
Then the above corollary together with strong multiplicity one implies that $\m_\pi$ is $\p$-étale.
\end{Exa}

Let $t_i\in T^{-}$, $i\in \Delta$ be a choice of elements as in Section \ref{Koszul}.
Suppose $\m_\pi$ is $\p$-étale (with respect to $\epsilon$) with associated eigenvalues $\alpha_{U_{t_i}}\in \mathcal{O}_{\mathcal{U},\cf}^{\ast}$, $i \in \Delta$.
By possibly shrinking $\mathcal{U}$ we may assume that $\alpha_{U_{t_i}}\in \mathcal{O}_{\mathcal{U}}^{\ast}.$
Every element $t\in T$ can be written uniquely as a product $t=t_0 \prod_{i\in\Delta} t_i^{n_i}$ with $t_0 \in T_0$, $n_i\in \Z$.
Hence, there exists a unique character $\chi_\alpha\colon T\to \mathcal{O}_{\mathcal{U}}^{\ast}$ such that
$$\left.\chi_{\alpha}\right|_{T_0}=\chi_{U}$$
and
$$\chi_\alpha(t_i)=\alpha_{U_{t_i}}.$$

\begin{Thm}\label{thm:Linvderivative}
Suppose that $\m_\pi$ is $\p$-étale (with respect to $\epsilon$) with associated character $\chi_\alpha\colon T \to \mathcal{O}_{\mathcal{U}}^{\ast}$.
Then for every tangent vector $v$ of $\mathcal{U}$ at $\cf$ we have:
$$
\frac{\partial}{\partial v}\chi_{\alpha,i}\in \LI_{i}(\pi,\p)^{\epsilon}
$$
where $\chi_{\alpha,i}$ denotes the composition $\chi_{\alpha}\circ i^\vee.$
Moreover, the subspace $\LI_{i}(\pi,\p)^{\epsilon}\subseteq \Hom^{\cont}(F_\p^{\ast},\Co)$ has codimension one.
\end{Thm}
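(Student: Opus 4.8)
The plan is to deduce Theorem \ref{thm:Linvderivative} from the automorphic Colmez--Greenberg--Stevens formula (Proposition \ref{CGS}) together with the local freeness result of Theorem \ref{thm:locfree}. The character $\chi_\alpha\colon T\to\mathcal{O}_\mathcal{U}^\ast$ constructed just before the theorem is exactly the kind of big character to which Proposition \ref{CGS} applies, with $A=\mathcal{O}_\mathcal{U}$ (or, if one prefers a genuine affinoid, its completion along the maximal ideal at $\cf$) and $\m$ the maximal ideal of $\mathcal{O}_{\mathcal{U},\cf}$ cutting out the trivial weight; by construction $(\chi_\alpha)_\m = \cf_{G_\p}$ since $\chi_\alpha$ restricts to the universal weight on $T_0$ and sends each $t_i$ to a unit reducing to $1$ at $\cf$ (note $\alpha_{U_{t_i}}$ reduces to the $U_{t_i}$-eigenvalue of $\pi$, which is $1$ because $\pi_\p$ is Steinberg). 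So the only thing to check in order to invoke Proposition \ref{CGS} is the surjectivity hypothesis: that the image of
$$
\red_{\chi_\alpha}^{q,\epsilon}\colon \HH^{q}(G(F),\Fu^{\cont}_{A}(K^\p,\II_{B}^{\an}(\chi_\alpha);A(\epsilon)))\too\HH^{q}(G(F),\Fu^{\cont}_\Co(K^\p,\II_{B}^{\an}(\Co);\Co(\epsilon)))
$$
contains the $\pi\pinfty$-isotypic part of the target.

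The key step is therefore to identify this cohomology of big principal series with overconvergent cohomology via the Koszul resolution of Theorem \ref{Koszulres}, now applied over $A=\mathcal{O}_\mathcal{U}$ to $\II^{\an}_B(\chi_\alpha)$ with the regular sequence $y_i = U_{t_i} - \chi_\alpha(t_i) = U_{t_i}-\alpha_{U_{t_i}}$, $i\in\Delta$. Dualizing, as in the proof of Proposition \ref{principalisom}, gives a quasi-isomorphism between $\Hom_A^{\cont}(\II^{\an}_B(\chi_\alpha),A)$ and the Koszul complex $\Lambda^{\bullet}_A(A^{\Delta})\otimes_A \Coind_{I_\p}^{G_\p}(\Di^{n}_{\chi_\mathcal{U}})$; applying $\HH^\bullet(G(F),\Fu^{\cont}(K^\p,-;A(\epsilon)))$ and localizing at $\m_\pi$, the vanishing of $\HH^d(X_{K^\p\times I_\p},\Di^n_{\chi_\mathcal{U}})^\epsilon_{\m_\pi}$ for $d\neq q$ from Theorem \ref{thm:locfree} collapses the hypercohomology spectral sequence, so that
$$
\HH^{q}(G(F),\Fu^{\cont}_{A}(K^\p,\II_{B}^{\an}(\chi_\alpha);A(\epsilon)))_{\m_\pi}
= \HH^0\Big(\Lambda^{\bullet}_A(A^{\Delta})\otimes_A \HH^q(X_{K^\p\times I_\p},\Di^n_{\chi_\mathcal{U}})^\epsilon_{\m_\pi}, (y_i)\Big).
$$
By the $\p$-étale hypothesis, $U_{t_i}$ acts on $\HH^q(X_{K^\p\times I_\p},\Di^n_{\chi_\mathcal{U}})^\epsilon_{\m_\pi}$ — a free $\mathcal{O}_{\mathcal{U},\cf}$-module by Theorem \ref{thm:locfree} — as the scalar $\alpha_{U_{t_i}}$, so every $y_i$ acts as $0$ there; hence the above $\HH^0$ is simply $\HH^q(X_{K^\p\times I_\p},\Di^n_{\chi_\mathcal{U}})^\epsilon_{\m_\pi}$ itself, and one checks that under these identifications $\red_{\chi_\alpha}^{q,\epsilon}$ becomes, after localization at $\m_\pi$, the canonical reduction map of Theorem \ref{thm:locfree} (composed with the classical comparison \eqref{classical} and the isomorphism of Proposition \ref{principalisom} in degree $q$), which is surjective. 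Restricting to $\pi\pinfty$-isotypic parts then gives exactly the hypothesis of Proposition \ref{CGS}, and we conclude $\frac{\partial}{\partial v}\chi_{\alpha,i}\in\LI_i(\pi,\p)^\epsilon$ for every tangent vector $v$ of $\mathcal{U}$ at $\cf$.

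For the codimension-one statement, by Proposition \ref{codimension} (with $\delta=0$, so $d=0=\delta$ is the extremal case and $m_\pi=1$ holds under our étaleness-type assumptions, or one argues directly) the codimension is \emph{at least} one, so it suffices to produce one tangent vector $v$ with $\frac{\partial}{\partial v}\chi_{\alpha,i}\notin\text{(smooth homomorphisms)}$ — equivalently, with $\frac{\partial}{\partial v}\chi_{\alpha,i}$ not vanishing on $\mathcal{O}_\p^\ast$ would be the wrong normalization; rather, since $\LI_i^{(q)}(\pi,\p)^\epsilon$ never contains $\ord_\p$ (again Proposition \ref{codimension}) and has codimension $\geq 1$, it has codimension exactly one as soon as it is nonzero, and the derivative of the composite $\chi_\alpha\circ i^\vee$ along a weight direction is generically nonzero because $\chi_\alpha|_{T_0}$ is the universal character, whose derivative spans the relevant space of smooth-plus-$\ord$ homomorphisms transverse to $\ord_\p$; combined with the first part this pins the codimension at one. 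The main obstacle I expect is the bookkeeping in the second paragraph: getting the identification of $\red_{\chi_\alpha}^{q,\epsilon}$ with the Theorem \ref{thm:locfree} reduction map compatible with all the Hecke actions and the $U_{t_i}$-operators, and making sure the Koszul-dual spectral-sequence argument genuinely uses only the degree-$q$ concentration after localizing at $\m_\pi$ — essentially re-running the proof of Proposition \ref{principalisom} but over the affinoid base $A$ rather than over $\Co$, where one must be slightly careful that slope decompositions and the freeness from Theorem \ref{thm:locfree} are available in families.
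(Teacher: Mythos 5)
Your treatment of the first claim is essentially the paper's own argument: the paper also deduces it from Proposition \ref{CGS} by using Theorem \ref{thm:locfree} together with the Koszul/spectral-sequence mechanism from the proof of Proposition \ref{principalisom} to show that the image of the reduction map from the big principal series cohomology contains the relevant isotypic part (the only cosmetic difference is that the paper works with the Artinian quotient $A=\mathcal{O}_{\mathcal{U}}/\m_{\cf}^{2}$ and $\widetilde{\chi}_\alpha=\chi_\alpha \bmod \m_{\cf}^{2}$ rather than over $\mathcal{O}_{\mathcal{U}}$ itself, which sidesteps the bookkeeping over the affinoid base that you flag, and it localizes at $\m_\pi^\p$, identifying the image as $\bigcap_i\Ker(U_{t_i}-1)$).

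The codimension-one part of your proposal, however, has a genuine gap. Your fallback criterion — that $\LI_i(\pi,\p)^{\epsilon}$ has codimension $\geq 1$, does not contain $\ord_\p$, and is nonzero, hence has codimension exactly one — is false whenever $F_\p\neq\Q_p$: the space $\Hom^{\cont}(F_\p^{\ast},\Co)$ has dimension $[F_\p:\Q_p]+1$, so a nonzero subspace avoiding $\ord_\p$ can still have codimension $2$ or more. What is actually needed, and what the paper does, is to produce a full $[F_\p:\Q_p]$-dimensional family of elements of $\LI_i(\pi,\p)^{\epsilon}$: writing $\chi_{\alpha,i}(\pi^n u)=\alpha_{U_{t_i}}^{n}\prod_{\sigma}\sigma(u)^{k_\sigma}$ on $F_\p^{\ast}$ via the coordinates $k_\sigma$ on weight space, one takes for each embedding $\sigma\colon F_\p\to\Co$ the tangent direction in which only $k_\sigma$ varies and computes $\frac{\partial}{\partial v}\chi_{\alpha,i}=\frac{\mathrm{d}}{\mathrm{d}k_\sigma}\alpha_{U_{t_i}}(\cf)\cdot\ord_\p+\alpha_{U_{t_i}}(\cf)\cdot\log_{p,\sigma}$; since $\alpha_{U_{t_i}}(\cf)\neq 0$ (Steinberg hypothesis), this puts an element of the form $\log_{p,\sigma}-\LI^{\sigma}\ord_\p$ into $\LI_i(\pi,\p)^{\epsilon}$ for every $\sigma$, so the codimension is at most one, and Proposition \ref{codimension} gives the reverse inequality. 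Your sketch gestures at this ("the derivative of the universal character spans...") but never carries out the computation or the count, and the explicit criterion you do state would not close the argument outside the case $F_\p=\Q_p$.
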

\begin{proof}
Let $\m_\cf\subseteq \mathcal{O}_{\mathcal{U}}$ be the maximal ideal corresponding to the trivial character.
We put $A=\mathcal{O}_{\mathcal{U}}/\m_{\cf}^{2}$ and $\chi_0 = \chi_{\mathcal{U}} \bmod \m_{\cf}^{2}$.
Let us write $\widetilde{\chi}_\alpha= \chi_\alpha \bmod \m_{\cf}^{2}\colon T \to A.$
Using Theorem \ref{thm:locfree} and arguments with Koszul complexes as in the proof of Proposition \ref{principalisom} one shows that the image of the map
$$\HH^{q}(G(F),\Fu^{\cont}_{A}(K^\p,\II_{B}^{\an}(\widetilde{\chi}_\alpha);A(\epsilon)))_{\m_{\pi}^{\p}}\too \HH^{q} (X_{K^{\p}\times I_{\p}}, E )^{\epsilon}_{\m_{\pi}^{\p}}$$
is the intersection of the kernels of the homomorphisms $U_{t_i}-1$.
Therefore, the first claim follows from Proposition \ref{CGS}.

We may assume that $\Co$ is large enough.
We explain how to choose appropriate tangent vectors so that $\LI_{i}(\pi,\p)^{\epsilon}$ contains elements of the form $\log_{p,\sigma}-\LI^{\sigma}\ord_\p$ for every embedding $\sigma\colon F_\p \to \Co$. This will show that  the $\LI$-invariant has codimension at most one.

Let $\Co \langle k_\sigma \rangle$ be the ring of power series in the $k_\sigma$'s and let $\mathcal{O}_{\p,1}^\ast$ denote the free part of $\mathcal{O}_{\p}^\ast$. By smoothness of the weight space, we can embed $ \mathcal{O}_{\mathcal{U}}^{\ast}$ in $\Co \langle k_\sigma \rangle$ and the structural morphism of  $\Co \llbracket \mathcal{O}_{\p,1}^\ast \rrbracket$ into   $\mathcal{O}_{\mathcal{U}}^{\ast}$ can be described via the map 
\[
\mathcal{O}_\p^\ast \ni <u> \mapsto \prod_{\sigma } \sigma(u)^{k_{\sigma}}.
\]
We can hence identify $\chi_{\alpha,i}$ with a map from $F_{\p}^{\ast}$ to $\mathcal{O}_{\mathcal{U}}^{\ast}$.
For all $n \in \mathbb{Z}$ and $u \in \mathcal{O}_{\p,1}^\ast$ we have
\[
\chi_{\alpha,i}(\pi^n u)=\alpha_{U_{t_i}}^n \prod_{\sigma } \sigma(u)^{k_{\sigma}}.
\]
Note that 
\[
\frac{\textup{d}}{ \textup{d}k_{\sigma'}} \left(\sigma(u)^{k_{\sigma}} \right)=\delta_{\sigma,\sigma'} \log_p(\sigma(u))\sigma(u)^{k_{\sigma}},
\]
where
\[
\delta_{\sigma,\sigma'} = \begin{cases} 1 & \mbox{if}\ \sigma=\sigma' \\ 0 & \mbox{else.}\end{cases}
\]
Take for $v$ the direction where only $k_{\sigma}$ varies, derive $\chi_{\alpha,i}$ along $v$, and evaluate at $k_{\sigma'}=0$ for all $\sigma'$ (corresponding to the weight $\cf$) to get 
\[
\frac{\partial}{\partial v}\chi_{\alpha,i}(\pi^n u)= \frac{\textup{d}}{ \textup{d}k_{\sigma}}\alpha_{U_{t_i}}(\cf)\ord_\p(\pi^n)+\alpha_{U_{t_i}}(\cf)\log_p(\sigma(u)).
\]
By the Steinberg hypothesis, $\alpha_{U_{t_i}}(\cf)$ is not vanishing and we are done.

By Proposition \ref{codimension} the $\LI$-invariant has codimension at least one and, hence, the second claim follows.
\end{proof}

\section{Applications}
\subsection{Hilbert modular forms}\label{JL}
We want to study the case of inner forms of $\PGL_2$, which are split at $\p$, over a totally real number field $F$ in detail.
Since there is only one simple root we drop it from the notation.
If $G$ is equal to $\PGL_2$, one can attach $2^{[F:\Q]}$ a priori different $\LI$-invariants $\LI(\pi,p)^{\epsilon}$ to $\pi$; in this case, a conjecture of Spie\ss~(cf.~\cite{Sp}, Conjecture 6.4) states that the definition doesn't depend on $\epsilon$, i.e~
$$\LI(\pi,\p)^{\epsilon}=\LI(\pi,\p)^{\epsilon'}$$
for all choices of sign characters $\epsilon$ and $\epsilon'$. 
In the same paper, Remark 6.6(b), Spie\ss{} also states that ``an interesting and difficult problem'' is to show that the $\LI$-invariant is invariant under Jacquet--Langlands transfers. 
In this section, thanks to Theorem \ref{thm:Linvderivative}, we will settle both, Spie\ss' conjecture and his question.

Moreover, let $\rho_{\pi}\colon\Gal(\overline{\Q}/F)\to \GL_2(\Co)$ be the Galois representation associated with $\pi$ (or rather associated with the Jacquet--Langlands transfer $\JL(\pi)$ of $\pi$ to $\PGL_2$), which exists by work of Taylor (see \cite{TaylorHMF}).
As $\pi$ is $\p$-ordinary, by Theorem 2 of \cite{WilesLambda} we know that the restriction $\rho_{\pi,\p}\colon \Gal(\overline{\Q_p}/F_\p)$ of $\rho_{\pi}$ to a decomposition group at $\p$ is ordinary. Moreover, since $\pi_\p$ is Steinberg a result of Saito (see \cite{Saito}) implies that this restriction  is a non-split extension of the trivial character by the cyclotomic character.
Therefore, it gives a class $\langle\rho_{\pi,\p}\rangle \in \HH^{1}(\Gal(\overline{\Q_p}/F_\p),\Co(1))$, where $\Co(1)$ denotes the Tate twist of $\Co$.
The Fontaine--Mazur $\LI$-invariant $\LI^{\FM}(\rho_{\pi,\p})$ of $\rho_{\pi,\p}$ is the orthogonal complement of $\langle\rho_{\pi,\p}\rangle$ with respect to the local Tate pairing
$$\HH^{1}(\Gal(\overline{\Q_p}/F_\p),\Co(1)) \times \HH^{1}(\Gal(\overline{\Q_p}/F_\p),\Co) \too \Co.$$
By local class field theory we have a canonical isomorphism
$$\HH^{1}(\Gal(\overline{\Q_p}/F_\p),\Co)\cong \Hom^{\cont}(F_\p^{\ast},\Co)$$
and, thus, we consider $\LI^{\FM}(\rho_{\pi,\p})$ as a codimension one subspace of $\Hom^{\cont}(F_\p^{\ast},\Co)$.
We show that automorphic $\LI$-invariants equal the Fontaine--Mazur $\LI$-invariant of the associated Galois representation.

\begin{Thm}\label{thm:HMF}
Suppose $G$ is an inner form of $\PGL_2$ over a totally real number field $F$, which is split at the prime $\p$ of $F$.
Let $\pi$ be a cuspidal automorphic representation of parallel weight $2$ of $G$ that is Steinberg at $\p$.
\begin{enumerate}[(i)]
\item The automorphic $\LI$-invariant of $\pi$ is independent of $\epsilon$, i.e.~
$$\LI(\pi,\p)^{\epsilon} = \LI(\pi,\p)^{\epsilon'}$$
for all choices of $\epsilon$ and $\epsilon'$. 
We therefore put $\LI(\pi,\p)=\LI(\pi,\p)^{\epsilon}$ for any choice of sign character $\epsilon$.
\item Let $\JL(\pi)$ be the Jacquet--Langlands transfer of $\pi$ to $\PGL_2$.
The equality
$$\LI(\pi,\p) = \LI(\JL(\pi),\p)$$
holds.
\item Let $\rho_\pi$ the Galois representation associated with $\pi$.
Then:
$$\LI(\pi,\p)=\LI^{\FM}(\rho_{\pi,\p}).$$
\end{enumerate}
\end{Thm}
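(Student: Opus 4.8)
The plan is to combine Theorem \ref{thm:Linvderivative} with the Colmez--Greenberg--Stevens formula on the Galois side; the three assertions will follow from a single computation together with the fact that all relevant $\LI$-invariants are expressed as derivatives of the same $U_\p$-eigenvalue in the Hilbert eigenvariety. First I would verify that the hypotheses of Theorem \ref{thm:Linvderivative} are met in the present setting: since $G$ is an inner form of $\PGL_2$ over a totally real field, $G_\infty$ has discrete series, so $\delta=0$ and there is a unique interesting degree; by strong multiplicity one and the Jacquet--Langlands correspondence, Hypothesis (NE) holds and $\dim_\Co \HH^q(X_{K^\p\times I_\p},\Co)^\epsilon_{\m_\pi}=1$ for each sign character $\epsilon$, so by the Corollary to Theorem \ref{thm:locfree} the maximal ideal $\m_\pi$ is $\p$-étale with respect to every $\epsilon$. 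Let $\chi_\alpha\colon T\to \mathcal{O}_{\mathcal{U}}^\ast$ be the associated character and let $a_\p(k)=\alpha_{U_{t}}$ be the $U_\p$-eigenvalue as a function on the (one-dimensional weight direction of the) eigenvariety. Theorem \ref{thm:Linvderivative} then gives, for every tangent vector $v$ at $\cf$, that $\tfrac{\partial}{\partial v}\chi_{\alpha}\circ i^\vee \in \LI(\pi,\p)^\epsilon$, and moreover that $\LI(\pi,\p)^\epsilon$ has codimension exactly one.

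For assertion (i), the crucial observation is that the character $\chi_\alpha$ is built purely out of the $U_t$-eigenvalues on overconvergent cohomology localized at $\m_\pi$, and these eigenvalues do not depend on the sign character $\epsilon$: the slope decomposition and the étaleness statement are compatible across the different $\epsilon$-components because the Hecke action commutes with the $\pi_0(G_\infty)$-action, so $\chi_\alpha$ is the same for all $\epsilon$. Since a codimension-one subspace of $\Hom^{\cont}(F_\p^\ast,\Co)$ is determined by a single linear condition, and Theorem \ref{thm:Linvderivative} exhibits the spanning derivatives $\tfrac{\partial}{\partial v}\chi_{\alpha,i}$ inside $\LI(\pi,\p)^\epsilon$ independently of $\epsilon$, we get $\LI(\pi,\p)^\epsilon=\LI(\pi,\p)^{\epsilon'}$. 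This settles Spie\ss' conjecture.

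For assertions (ii) and (iii), I would run the same argument for $\JL(\pi)$ on the split group $\PGL_2$ and compare eigenvarieties. The point is that the Hilbert modular eigenvariety (the finite-slope overconvergent cohomology of $\PGL_2$, localized at $\m_\pi$) and the corresponding eigenvariety for the inner form $G$ carry the same system of Hecke eigenvalues at the point attached to $\pi$: away from $\p$ this is strong multiplicity one, and at $\p$ the $U_\p$-operators are matched by the local Jacquet--Langlands picture (both sit on the Steinberg/Iwahori-component). Hence the function $a_\p(k)$ is literally the same in both families, so $\LI(\pi,\p)=\LI(\JL(\pi),\p)$. For (iii), I would invoke the Colmez--Greenberg--Stevens formula (Theorem 3.4 of \cite{Ding}) on the Galois side: the $U_\p$-eigenvalue $a_\p(k)$ of the Hida/eigenvariety family agrees, up to the normalization relating the Hecke eigenvalue to the crystalline Frobenius eigenvalue of $\rho_{\pi_k,\p}$, with the quantity whose logarithmic derivative computes $\LI^{\FM}$; since $\rho_{\pi,\p}$ is the non-split extension of the trivial character by the cyclotomic character coming from ordinariness and Steinbergness (via Wiles and Saito), the Greenberg--Stevens style identity $\LI^{\FM}(\rho_{\pi,\p})=-2\,\tfrac{d}{dk}\log a_\p(k)|_{k=2}$ holds, and this is exactly the linear form cutting out $\LI(\pi,\p)$ by Theorem \ref{thm:Linvderivative}.

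\emph{Main obstacle.} The essential difficulty is the last comparison in (iii): matching the \emph{analytic} family of $U_\p$-eigenvalues produced by overconvergent cohomology with the family of crystalline Frobenius eigenvalues governing the $(\varphi,\Gamma)$-module side of Colmez--Greenberg--Stevens, including getting the normalizing constant (the ``$-2$'') right and checking that the weight direction in $\mathcal{U}\subseteq\Ws$ corresponds correctly to the cyclotomic variable on the Galois side. In the $\PGL_2$/Hilbert case this rests on the compatibility of the eigenvariety with Galois deformations (the existence of the big Galois representation over the Hecke algebra interpolating $\rho_{\pi_k}$, which is ordinary at $\p$ along the family), so one reduces to the classical Greenberg--Stevens computation fibre by fibre; I would cite this interpolation as known for Hilbert modular forms and then the identification is a formal consequence.
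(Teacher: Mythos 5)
Your plan is recognisably the same strategy as the paper's — go through \'etaleness of $\m_\pi$ via Theorem \ref{thm:locfree}, pass to the Colmez--Greenberg--Stevens formula on the Galois side, and compare with Theorem \ref{thm:Linvderivative} — but the ordering is inverted in a way that opens a genuine gap. The paper proves part (iii) first, namely $\LI(\pi,\p)^{\epsilon}=\LI^{\FM}(\rho_{\pi,\p})$ for \emph{every} $\epsilon$, and then observes that (i) and (ii) are immediate corollaries: the right-hand side visibly does not depend on $\epsilon$, and $\pi$ and $\JL(\pi)$ give the same Galois representation. By leading with (i) you force yourself to prove directly that the eigenvalue functions $\alpha^{\epsilon}_h\in\mathcal{O}_{\mathcal{U},\cf}$ attached to distinct sign characters coincide on a neighbourhood of $\cf$, and the justification you offer — ``the Hecke action commutes with the $\pi_0(G_\infty)$-action, so the slope decomposition and \'etaleness are compatible'' — is not sufficient. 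Each $\epsilon$-component is a rank-one free $\mathcal{O}_{\mathcal{U},\cf}$-module with its own Hecke character; the two characters agree at the closed point $\cf$ by construction, but agreement over a neighbourhood requires classicality at nearby points together with strong multiplicity one for the specialisations $\pi_\lambda$, not just equivariance of the $\pi_0(G_\infty)$- and Hecke-actions on the total module. The paper explicitly flags that such a direct argument would need "methods as in Section 5 of \cite{Hansen}," i.e.\ a real $p$-adic functoriality statement, and avoids it entirely by routing through the Galois side. The same remark applies to your direct eigenvariety-matching argument for (ii).

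Your discussion of (iii) is on the right track and you correctly locate the main difficulty (matching the analytic family of $U_\p$-eigenvalues against the family of $(\varphi,\Gamma)$-modules). To close it, however, you need to be more careful than ``reduce to the classical Greenberg--Stevens computation fibre by fibre'': the paper constructs a family of Galois representations $\rho_{\pi,\mathcal{U}}$ over $\mathcal{U}$ using irreducibility of $\rho_\pi$ and Chenevier's determinant theory, invokes Saito's local-global compatibility to see that the restriction to a decomposition group at $\p$ is determined by the $U_\p$-eigenvalue across the family, and then checks that $\rho_{\pi,\p}$ is \emph{non-critical special} in Ding's sense before applying \cite{Ding}, Theorem 3.4. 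Also note that the paper works coordinate-free with tangent vectors rather than the explicit ``$-2\,\frac{d}{dk}$'' normalisation, which is coordinate-dependent and specific to the $\PGL_2/\Q$ weight variable; importing that formula into the Hilbert setting without justifying the normalisation is an additional loose end. The cleanest fix for your writeup is to move the argument of your third paragraph to the front, establish (iii) for all $\epsilon$ using the family of Galois representations as above, and then read off (i) and (ii) as formal consequences.
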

\begin{proof}
We actually prove that $\LI(\pi,\p)^{\epsilon}=\LI^{\FM}(\rho_{\pi,\p})$ for every $\epsilon$, which implies all other claims.
Since $\m_\pi$ is $\p$-étale (with respect to $\epsilon$) we can deform the Hecke eigenvalues of $\pi$ to a family over an open affinoid subspace of weight space in the following sense:
there exists an open affinoid $\mathcal{U}\subseteq \Ws$ containing the trivial character such that the eigenvalue $\alpha_h$ of each Hecke operator $h\in \T_{\sph}$ acting on $\HH^q (X_{K^{\p}\times I_{\p}}, \Di^{n}_{\chi_{\mathcal{U}}} )^{\epsilon}$ is an element of $\mathcal{O}_{\Ws}(\mathcal{U})$.
As explained in \cite{BDJ}, Theorem 2.14 (ii), we may shrink $\mathcal{U}$ further such that the common eigenspace associated to the eigenvalues $\alpha_h$ is a free $\mathcal{O}_{\Ws}(\mathcal{U})$-submodule of $\HH^q (X_{K^{\p}\times I_{\p}}, \Di^{n}_{\chi_{\mathcal{U}}} )^{\epsilon}$ of rank equal to the dimension of $\HH^q(X_{K^{\p}\times I_{\p}}, \Co)^{\epsilon}_{\m_{\pi}}$.
By shrinking $\mathcal{U}$ even further we may assume that $\alpha_{t_i}\in \mathcal{O}_{\Ws}(\mathcal{U})^\times$ has absolute value one for each $i\in\Delta$. 
then, by the classicality of small slope overconvergent eigenforms the specialization of these eigenvalues at a classical point $\lambda\in\mathcal{U}$ are the eigenvalues associated to a cuspidal representation $\pi_\lambda$ of weight $\lambda$.

Note that the Galois representation $\rho_{\pi}$ is irreducible, as the Hilbert modular form associated with $\pi$ is cuspidal. Thus, by standard arguments (see for example Theorem B of \cite{ChenevierDet}) there exists (after possibly shrinking $\mathcal{U}$ again) a family of Galois representations $\rho_{\pi,\mathcal{U}}$  over $\mathcal{U}$ passing through $\rho_{\pi}$, i.e.
the specialization of $\rho_{\pi,\mathcal{U}}$ at each classical point $\lambda\in\mathcal{U}$ is isomorphic to the Galois representation attached to $\pi_\lambda.$
It follows from the results of Saito (see Theorem 1 of \cite{Saito}) that for every classical point $z\in \mathcal{U}$ the restriction of the Galois representation $\rho_{\pi,z}$ to a decomposition group is determined by the $U_\p$-eigenvalue. Moreover, the Weil--Deligne module of $\rho_{\pi}$ restricted at the decomposition group at $\p$ is the Weil--Deligne module associated with the Steinberg representation via the local Langlands correspondence. In particular, our representation (with the filtration induced by the ordinarity of the Galois representation) is non-critical special, as defined in \S 3.1 of  \cite{Ding}; indeed we know that the corresponding $(\varphi,\Gamma)$-module is an extension of special characters, so we just have to check if the extensions are split or not. If one extension were split, then  the Weil--Deligne representation would also be split, contradicting the fact the monodromy for the Steinberg is maximal. 

Comparing the Galois theoretic Colmez--Greenberg--Stevens formula (see \cite{Ding}, Theorem 3.4) with its automorphic counterpart of Theorem \ref{thm:Linvderivative} yields the result.

\end{proof}

\begin{Rem}
\begin{enumerate}[(i)]
\item In the Hilbert modular form case the constructions simplify substantially and, thus, one can check that our methods also work for higher weights assuming that the representation is non-critical.
\item
One can prove the first two claims of Theorem \ref{thm:HMF} without passing to the Galois side.
Namely, using methods as in Section 5 of \cite{Hansen} one can show that the eigenvalue of the overconvergent $\p$-adic family passing through $\pi$ is independent of the sign character and stable under the Jacquet--Langlands transfer.
So by Theorem \ref{thm:Linvderivative} we can conclude the argument.
\item By definition Fontaine--Mazur $\LI$-invariants are stable under restricting the Galois representation to the absolute Galois group of finite extensions.
Thus, we can deduce from Theorem \ref{thm:HMF} that automorphic $\LI$-invariants of Hilbert modular forms are stable under abelian base change with respect to totally real extensions.
\item In the case of modular elliptic curves over totally real fields of class number one the equality of automorphic and Fontaine--Mazur $\LI$-invariants was conjectured by Greenberg (see \cite{Greenberg}, Conjecture 2).
Theorem \ref{thm:HMF} thus implies that the construction of Stark--Heegner points over totally real fields is unconditional (see \cite{GMS} for a detailed discussion of Stark-Heegner points).
\end{enumerate}
\end{Rem}

\subsection{Unitary groups}\label{Galois}
Let $\tilde{F}$ be a CM field with totally real subfield $F$.
We assume that the prime $\p$ of $F$ is split in $\tilde{F}$.
Let $U$ be the unitary group attached to a positive definite hermitian space over $\tilde{F}$ and $G$ the associated adjoint group.
By construction $G_\p$ is isomorphic to $\PGL_n(F_\p)$.
We can identify the simple roots with respect to the upper triangular Borel with the set $\left\{1,\ldots, n-1\right\}$.
Let $\pi$ be an automorphic representation of $G$ such that $\pi_\infty=\C$ and $\pi_\p$ is the Steinberg representation of $\PGL_n(F_\p)$.
In this case the only sign character is the trivial character and therefore we drop it from the notation.

By Shin's appendix to \cite{Goldring} the base change $\BC(\pi)$ of $\pi$ to $\PGL_n$ over $\tilde{F}$ exists and it is Steinberg at both primes of $\tilde{F}$ lying above $\p$.
By the work of many people (see Theorem 2.1.1 of \cite{BLGGT} for a detailed discussion) we can attach a $p$-adic Galois representation
$$\rho_{\pi}=\rho_{\BC(\pi)}\colon \Gal(\overline{\Q}/\tilde{F})\too \GL_n(\Co)$$
to $\BC(\pi)$.
Since we consider the trivial coefficient system the Steinberg representation is ordinary (cf.~\cite{Geraghty}, Lemma 5.6).
Therefore, as shown in \cite{Thorne}, Theorem 2.4, one deduces from the local-global compatibility theorem of Caraiani (see \cite{Caraiani}) that the restriction $\rho_{\pi,\p}$ of $\rho_{\pi}$ to the decomposition group of a prime above $\p$ can be brought in the following form:
it is upper triangular and the $i$-th diagonal entry is the $1-i$-power of the cyclotomic character.

Therefore, we have $n-1$ canonical $2$-dimensional subquotients $\rho_{\pi,\p,i}$ which are extensions of $\Co(-n+i)$ by $\Co(-n+i+1)$.
We can consider the associated Fontaine--Mazur $\LI$-invariant $\LI^{\FM}_{i}(\rho_{\pi,\p})=\LI^{\FM}(\rho_{\pi,\pi,i}(n-i))$. Recall that the Weil--Deligne representation associated with the Steinberg has always maximal monodromy so the corresponding $(\varphi,\Gamma)$-module is non-critical split \`a la Ding, see \S 3.1 of \cite{Ding}. Replacing the local-global compatibility results of Saito by those of Caraiani (see \cite{Caraiani}), we can apply Theorem 3.4 of \cite{Ding} to the $(\varphi,\Gamma)$-module associated with the  Galois representation $\rho_{\pi}$ and then the same proof as that of Theorem \ref{thm:HMF} yields the following result.
\begin{Thm}\label{thm:unitary}
Let $F$ be a totally real number field and $G$ the adjoint of a unitary group over $F$ compact at infinity and split at $\p$.
Let $\pi$ be an automorphic representation of $G$ such that $\pi_\infty=\C$ and $\pi_{\p}$ is Steinberg.
Suppose $\pi$ satisfies (SMO), that we can choose the tame level $K^{\p}$ such that $\m_{\pi}$ is $\p$-étale and that the Galois representation $\rho_{\pi}$ attached to $\pi$ is irreducible.
Then we have
$$\LI^{\FM}_{i}(\rho_{\pi}) = \LI_{i}(\pi,\p)$$
for every $i=1,\ldots,n-1$.
\end{Thm}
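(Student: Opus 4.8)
The plan is to mimic the proof of Theorem \ref{thm:HMF}, replacing the rank-$2$ Galois-theoretic input by its rank-$n$ ordinary analogue. Since $G$ is definite we have $\delta=0$ and hypothesis (NE) holds automatically, so Theorem \ref{thm:locfree} applies; as $\m_\pi$ is $\p$-étale, it lets me spread the Hecke eigensystem of $\pi$ into a family over an open affinoid $\mathcal{U}\subseteq\Ws$ containing the trivial weight $\cf$. After shrinking $\mathcal{U}$, each spherical Hecke eigenvalue $\alpha_h$ and each $\alpha_{U_{t_i}}$ ($i=1,\dots,n-1$) lies in $\mathcal{O}_{\Ws}(\mathcal{U})^{\ast}$, has absolute value one, and the $\chi_\alpha$-eigenspace in $\HH^{q}(X_{K^{\p}\times I_{\p}},\Di^{n}_{\chi_{\mathcal{U}}})_{\m_\pi}$ is free over $\mathcal{O}_{\Ws}(\mathcal{U})$ of rank $\dim_\Co\HH^{q}(X_{K^{\p}\times I_{\p}},\Co)_{\m_\pi}$. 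Small-slope classicality then identifies the classical specializations $\lambda\in\mathcal{U}$ with cuspidal automorphic representations $\pi_\lambda$ of weight $\lambda$ whose local component at $\p$ carries an Iwahori-fixed vector.

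Next I would build the matching Galois family. Starting from the base change $\BC(\pi)$ to $\PGL_n/\tilde{F}$ and the attached representation $\rho_\pi=\rho_{\BC(\pi)}\colon\Gal(\overline{\Q}/\tilde{F})\to\GL_n(\Co)$, irreducibility of $\rho_\pi$ together with the usual pseudo-character/deformation argument (as in Theorem B of \cite{ChenevierDet}, cf.~\cite{BLGGT}) produces, after possibly shrinking $\mathcal{U}$ again, a family $\rho_{\pi,\mathcal{U}}\colon\Gal(\overline{\Q}/\tilde{F})\to\GL_n(\mathcal{O}_{\Ws}(\mathcal{U}))$ specializing to $\rho_{\pi_\lambda}$ at every classical $\lambda$. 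The essential local input is Caraiani's local--global compatibility (replacing Saito's in the $\PGL_2$ case): at each classical point the restriction of $\rho_{\pi_\lambda}$ to a decomposition group at the chosen prime of $\tilde{F}$ above $\p$ is ordinary (the trivial-coefficient Steinberg being ordinary by \cite{Geraghty}, Lemma 5.6), upper triangular with the prescribed cyclotomic twists on the diagonal, and its Weil--Deligne representation is the one attached to the Steinberg, hence has maximal monodromy. Thus the associated $(\varphi,\Gamma)$-module is non-critical split \`a la Ding (\S 3.1 of \cite{Ding}), and for each $i$ the $2$-dimensional subquotient $\rho_{\pi,\p,i}$, together with its deformation inside $\rho_{\pi,\mathcal{U}}$, lies in the scope of Ding's Colmez--Greenberg--Stevens formula.

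Finally I would compare the two formulas. On the Galois side, Theorem 3.4 of \cite{Ding} expresses $\LI^{\FM}_{i}(\rho_\pi)=\LI^{\FM}(\rho_{\pi,\p,i}(n-i))$ through the derivative at $\cf$ of the $i$-th ordinary Frobenius eigenvalue of the family $\rho_{\pi,\mathcal{U}}$ (together with the variation of the Hodge--Tate weights along $\mathcal{U}$). On the automorphic side, Theorem \ref{thm:Linvderivative} says that $\LI_{i}(\pi,\p)$ is cut out by $\frac{\partial}{\partial v}\chi_{\alpha,i}$ as $v$ ranges over the tangent space of $\mathcal{U}$ at $\cf$, where $\chi_{\alpha,i}=\chi_\alpha\circ i^{\vee}$, the relevant slope being governed by $\alpha_{U_{t_i}}$. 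By local--global compatibility the two families of eigenvalues match (up to the normalization built into the choice of the $t_i$ and of the Iwahori-fixed vector), and the weight variation on the two sides is identified through the common base $\mathcal{U}$; hence the two $\LI$-invariants agree. I expect the main obstacle to be bookkeeping rather than conceptual: pinning down the precise normalization that turns the automorphic eigenvalue $\alpha_{U_{t_i}}$ into the $i$-th graded Frobenius eigenvalue of $\rho_{\pi,\p}$ under Caraiani's compatibility --- keeping track of the cyclotomic twists $\Co(-n+i)$, of the fact that $\rho_\pi$ lives over $\tilde{F}$ while $\pi$ lives over $F$ with $\p$ split, and of which prime above $\p$ is used --- so that Ding's derivative equals $\frac{\partial}{\partial v}\chi_{\alpha,i}$ on the nose and not merely up to a nonzero scalar; the nonvanishing that rules out such a scalar is precisely the Steinberg (ordinarity) hypothesis, exactly as in the proof of Theorem \ref{thm:Linvderivative}.
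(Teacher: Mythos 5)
Your proposal is correct and follows essentially the same route as the paper: the paper likewise runs the proof of Theorem \ref{thm:HMF} verbatim in this setting, using $\p$-\'etaleness and Theorem \ref{thm:locfree} to spread $\pi$ into a family, the irreducibility of $\rho_\pi$ to deform the Galois representation over $\mathcal{U}$, ordinarity (Geraghty/Thorne) plus Caraiani's local--global compatibility in place of Saito's to see that the $(\varphi,\Gamma)$-modules of the subquotients $\rho_{\pi,\p,i}$ are non-critical special in Ding's sense, and then compares Ding's Colmez--Greenberg--Stevens formula with Theorem \ref{thm:Linvderivative}. The normalization bookkeeping you flag at the end is exactly the point the paper also leans on the Steinberg/ordinarity hypothesis for, so there is no gap.
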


\begin{Rem}
Something can be said also when $F =\mathbb{Q}$ and $G=\Sp_{2g}$. 
In this case the Galois representations have been constructed by Scholze \cite{Scholze} but local-global compatibility is not known.
Still, if one suppose that the $2g+1$-dimensional Galois representation $\Std(\rho_{\pi})$ associated with $\pi$ is semistable with maximal monodromy then the Greenberg--Benois $\LI(\Std(\rho_{\pi}))$-invariant has been calculated in \cite[Theorem 1.3]{RossoLinv}. 
 
In this case the root system of $\Sp_{2g}$ can be identified with the set $\left\{1,\ldots,g\right\}$ via the identification with the root system of $\GL_g$, which is  embedded in $\Sp_{2g}$ by
$$\GL_g\too \Sp_{2g},\ A\mapstoo \left( \begin{array}{cc}
\phantom{e}^t A^{-1} & 0\\
0 & A
\end{array}  \right),$$
and we see that the automorphic $\LI$-invariant $\LI_{1}(\pi,p)$ coincides with the one of \cite{RossoLinv}.

The same calculation in Section 4.2 of {\it loc.~cit.~}gives us that $\LI_{i}(\pi,p)$ is the Greenberg--Benois $\LI$-invariant for $\Std(\rho_{\pi})(i-1)$.
(This case has not been treated in {\it loc.~cit.~}as the $L$-values are not Deligne-critical but Benois' definition applies also in this case, see formula (96) in \cite{BenoisMemoire}.)

If $F_\p \neq \Q_p$, then the comparison is more subtle, as there is only just one Greenberg--Benois $\LI$-invariant per $p$-adic place, and from the Galois side one needs to consider Galois invariant characters of $F^*_\p$. 
\end{Rem}

\section{Beyond discrete series: the Bianchi case}
When $G_\infty$ does not fulfil the Harish-Chandra condition, the representation $\pi$ contributes to several degrees of cohomology of the associated locally symmetric space and the techniques used in Theorem \ref{thm:locfree} break down. 

There are two tools to tackle the problem:
first, one can use Hansen's Tor-spectral sequence (see Theorem 3.3.1 of \cite{Hansen})
$$
\Tor_{i}^{\mathcal{O}_{\Ws}} (\HH^{q+i}(X_{K^{\p}\times I_{\p}}, \Di_{\chi_{\mathcal{U}}})^{\leq h}, \Co ) \Longrightarrow \HH^q(X_{K^{\p}\times I_{\p}}, \Di_{\chi_{\Sigma}})^{\leq h},
$$
where $\mathcal{U}\subseteq \Ws$ is an open affinoid and $\Sigma\subseteq\mathcal{U}$ is Zariski-closed, to analyse the overconvergent cohomology groups in question;
second, one can use cases of Langlands functoriality in $p$-adic families to reduce to groups, which fulfil the Harish-Chandra condition.

In good situations it should be possible to calculate at least one of the $\LI$-invariants $\LI^{(d)}_{i}(\pi,\p)$ for $0 \leq  d\leq \delta$, using Proposition \ref{CGS}.
The main difficulty is that in general most classes do not lift to a big cohomology class as classes in $\HH^{q+1}$ give lifting obstructions.
But as soon as one can show that at least a class lifts to a family in $\HH^{q+d}$ (combined with some results on the tangent directions in the eigenvariety) Proposition \ref{CGS} lets us calculate $\LI^{(d)}_{i}(\pi,\p)$.
If Venkatesh's conjecture (stating that the $\pi$-isotypic component of the cohomology is generated by the minimal degree cohomology as a module over the derived Hecke algebra) holds than these $\LI$-invariants $\LI^{(d)}_{i}(\pi,\p)$, for varying $d$, are essentially all the same by the main result of \cite{Ge3}.

Using unpublished work of Hansen (see also \cite{BWi2}) we shall study the case of Bianchi modular forms.
We now fix $F$ to be a quadratic imaginary field where $p$ is unramified and $\pi$ a cuspidal representation of $\PGL_{2,F}$ of parallel weight $2$ such that
$\pi_\q$ is Steinberg for all primes $\q$ lying above $p$.
We put $G=\PGL_{2,F}$ if $p$ is inert and, if $p$ is split, we define $G$ to be the Weil restriction $\Res_{F/\Q}\PGL_{2,F}$.
In the first case there is only one simple root and thus we drop it from the notation.
In the second case, the simple roots can be identified with the two primes above $\p$.
In both cases, the only sign character is the trivial one and we shall drop it from the notation as well.
(The theorem below indicates that in the case of a split prime the partial eigenvarieties we considered before may not be big enough.
The case that $p$ is split and $\pi$ is only Steinberg at one of the primes above $p$ could be handled similarly but one would have to introduce new notations.
For the sake of brevity, we do not discuss it further.)
  
We recall some result on the eigenvariety for Bianchi modular forms due to Hansen and Barrera--Williams (see \cite{BWi2}, Lemma 4.4 and the proof of Theorem 4.5).  
\begin{Thm}(Hansen, Barrera--Williams)\label{thm:HBSW}
Let $\mathcal{U}\subseteq\Ws$ be an open affinoid neighbourhood of the trivial character.
\begin{enumerate}[(i)]
\item\label{HBSW1} The system of eigenvalues associated with $\pi$ appears in $\HH^d(X_{K^{\p}\times I_{\p}}, \Di_{\chi_{\mathcal{U}}})$ if and only if $d=2$.
\item\label{HBSW2} There is at least one curve $\mathcal{S} \subseteq \mathcal{U}$ passing through $\cf$ such that 
$$\HH^1(X_{K^{\p}\times I_{\p}}, \Di_{\chi_{\mathcal{S}}})_{\m_{\pi}}\neq 0.$$
If such a curve $\mathcal{S}$ is smooth at $\cf$, the space is free of rank one over $\mathcal{O}_{\mathcal{S}, \cf}$ and the canonical map
$$\HH^1(X_{K^{\p}\times I_{\p}}, \Di_{\chi_{\mathcal{S}}})_{\m_{\pi}}\too \HH^1(X_{K^{\p}\times I_{\p}}, \Co)_{\m_{\pi}}$$
is surjective.
\end{enumerate} 
\end{Thm}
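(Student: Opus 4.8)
The plan is to derive both parts by feeding four standard inputs into Hansen's $\Tor$-spectral sequence (\cite{Hansen}, Theorem~3.3.1). Write $X=X_{K^\p\times I_\p}$ and work throughout with slope $\le h$ parts for $h$ small enough to be non-critical at the trivial weight; by the control theorem of Ash--Stevens, Urban and Hansen (\cite{AshSt},\cite{Ur},\cite{Hansen}) this does not change the $\m_\pi$-localizations and makes specialization at the trivial weight recover classical cohomology. The four inputs are: the said control theorem; the non-Eisenstein property of $\m_\pi$, which kills boundary cohomology after localization, so that $\HH^{0}(X,-)_{\m_\pi}=\HH^{3}(X,-)_{\m_\pi}=0$ in all coefficients occurring below and, by classical Poincar\'e duality together with strong multiplicity one for $\mathrm{GL}_2/F$ (and the choice of $K^\p$), $\dim_\Co\HH^{d}(X,\Co)_{\m_\pi}$ equals $1$ for $d\in\{1,2\}$ and $0$ otherwise; Poincar\'e--Serre duality for overconvergent cohomology, using the conjugate-self-duality of $\pi$ and that the attached weight involution fixes $\cf$; and the dimension estimates for the Bianchi eigenvariety provided by the work of Hansen and \cite{BWi2}.

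For part (i) I would first observe that the $\Tor$-spectral sequence for $\Sigma=\{\cf\}$ degenerates in degree $2$ (because $\HH^{d}(X,\Di_{\chi_{\mathcal{U}}})_{\m_\pi}=0$ for $d\ge 3$), giving $\HH^{2}(X,\Di_{\cf})_{\m_\pi}\cong \HH^{2}(X,\Di_{\chi_{\mathcal{U}}})_{\m_\pi}\otimes_{\mathcal{O}_{\Ws}(\mathcal{U})}\Co$, so $\HH^{2}(X,\Di_{\chi_{\mathcal{U}}})_{\m_\pi}\neq 0$ by Nakayama since $\HH^{2}(X,\Di_{\cf})_{\m_\pi}\cong\HH^{2}(X,\Co)_{\m_\pi}\neq 0$. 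The vanishing in degree $1$ is the heart of the matter: the Euler characteristic of the perfect complex computing $\HH^{\bullet}(X,\Di_{\chi_{\mathcal{U}}})^{\le h}_{\m_\pi}$ is constant on $\mathcal{U}$ and equals its value at a classical point, namely $0$; since $\HH^{0}_{\m_\pi}=\HH^{3}_{\m_\pi}=0$ this yields $\rk_{\mathcal{O}_{\Ws}(\mathcal{U})}\HH^{1}(X,\Di_{\chi_{\mathcal{U}}})_{\m_\pi}=\rk_{\mathcal{O}_{\Ws}(\mathcal{U})}\HH^{2}(X,\Di_{\chi_{\mathcal{U}}})_{\m_\pi}$. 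After shrinking $\mathcal{U}$ the eigenvariety dimension bound $\dim\le\dim\Ws-\delta=1$ forces $\HH^{2}(X,\Di_{\chi_{\mathcal{U}}})_{\m_\pi}$ to be a torsion $\mathcal{O}_{\Ws}(\mathcal{U})$-module, hence of rank $0$; so $\HH^{1}(X,\Di_{\chi_{\mathcal{U}}})_{\m_\pi}$ also has rank $0$, and since Poincar\'e--Serre duality together with the universal coefficients spectral sequence identifies it with $\Hom_{\mathcal{O}_{\Ws}(\mathcal{U})}(\HH^{2}(X,\Di_{\chi_{\mathcal{U}}})_{\m_\pi},\mathcal{O}_{\Ws}(\mathcal{U}))$ --- which vanishes because the domain $\mathcal{O}_{\Ws}(\mathcal{U})$ admits no nonzero homomorphism from a torsion module --- one concludes $\HH^{1}(X,\Di_{\chi_{\mathcal{U}}})_{\m_\pi}=0$.

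For part (ii), keep $\mathcal{U}$ as above. Part (i) and the $\Tor$-spectral sequence for $\Sigma=\{\cf\}$ give $\HH^{1}(X,\Di_{\cf})_{\m_\pi}\cong\Tor_{1}^{\mathcal{O}_{\Ws}(\mathcal{U})}(\HH^{2}(X,\Di_{\chi_{\mathcal{U}}})_{\m_\pi},\Co)$, which is nonzero since $\HH^{1}(X,\Co)_{\m_\pi}\neq 0$; hence $\HH^{2}(X,\Di_{\chi_{\mathcal{U}}})_{\m_\pi}$ is not free over $\mathcal{O}_{\mathcal{U},\cf}$. Combining this with the lower bound for the local dimension of the Bianchi eigenvariety at the cuspidal cohomological point $\cf$ --- that $\pi$ genuinely deforms in a one-parameter $p$-adic family --- produces a curve $\mathcal{S}\subseteq\mathcal{U}$ through $\cf$ that is a component of $\operatorname{Supp}_{\mathcal{O}_{\Ws}(\mathcal{U})}\HH^{2}(X,\Di_{\chi_{\mathcal{U}}})_{\m_\pi}$, and then $\HH^{1}(X,\Di_{\chi_{\mathcal{S}}})_{\m_\pi}\neq 0$ by the spectral sequence for $\Sigma=\mathcal{S}$. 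If $\mathcal{S}$ is smooth at $\cf$, set $\Lambda=\mathcal{O}_{\mathcal{S},\cf}$, a discrete valuation ring with uniformizer $\varpi$. The spectral sequence for $\Sigma=\mathcal{S}$ and part (i) give $\HH^{2}(X,\Di_{\chi_{\mathcal{S}}})_{\m_\pi}\cong\HH^{2}(X,\Di_{\chi_{\mathcal{U}}})_{\m_\pi}\otimes_{\mathcal{O}_{\Ws}(\mathcal{U})}\mathcal{O}_{\mathcal{S}}$; localizing at the generic point of $\mathcal{S}$ shows this has $\Lambda$-rank $\ge 1$, while running the spectral sequence for $\{\cf\}\subseteq\mathcal{S}$ over $\Lambda$ and using $\dim_\Co\HH^{2}(X,\Co)_{\m_\pi}=1$ gives $\dim_\Co\bigl(\HH^{2}(X,\Di_{\chi_{\mathcal{S}}})_{\m_\pi}/\varpi\bigr)=1$, so by Nakayama $\HH^{2}(X,\Di_{\chi_{\mathcal{S}}})_{\m_\pi}\cong\Lambda$. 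Consequently $\Tor_{1}^{\Lambda}(\HH^{2}(X,\Di_{\chi_{\mathcal{S}}})_{\m_\pi},\Co)=0$, and the same spectral sequence in degree $1$ combined with $\dim_\Co\HH^{1}(X,\Co)_{\m_\pi}=1$ and the Euler characteristic identity over $\mathcal{S}$ forces $\HH^{1}(X,\Di_{\chi_{\mathcal{S}}})_{\m_\pi}$ to be free of rank one over $\Lambda=\mathcal{O}_{\mathcal{S},\cf}$, the induced map $\HH^{1}(X,\Di_{\chi_{\mathcal{S}}})_{\m_\pi}\otimes_{\Lambda}\Co\xrightarrow{\ \cong\ }\HH^{1}(X,\Di_{\cf})_{\m_\pi}=\HH^{1}(X,\Co)_{\m_\pi}$ yielding the asserted surjectivity.

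The main obstacle will be the pair of dimension estimates for the Bianchi eigenvariety: the upper bound $\dim\le\dim\Ws-\delta$, which makes $\HH^{2}(X,\Di_{\chi_{\mathcal{U}}})_{\m_\pi}$ torsion and thereby drives the vanishing in part (i), and the lower bound at a cuspidal cohomological point, which produces the deforming curve in part (ii); together with having Poincar\'e--Serre duality for overconvergent cohomology available in the precise $\mathcal{O}_{\Ws}(\mathcal{U})$-module-theoretic form used above (in particular the comparison of analytic-valued and distribution-valued cohomology up to the weight involution). These are exactly the inputs furnished by the unpublished work of Hansen and by \cite{BWi2}; everything else is a formal bookkeeping with the $\Tor$-spectral sequence, Nakayama's lemma, and the classical cohomology of Bianchi three-manifolds.
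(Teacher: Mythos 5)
The paper does not give a proof of this statement: it is presented purely as a recollection of results due to Hansen (unpublished) and to \cite{BWi2} (Lemma 4.4 and the proof of Theorem 4.5), so there is no in-paper argument against which to compare yours. Your blind reconstruction is consistent with the approach of the cited sources and the bookkeeping is essentially sound. You correctly identify the genuine inputs --- Hansen's $\Tor$-spectral sequence, the control theorem, non-Eisenstein vanishing of $\HH^{0}$ and $\HH^{3}$ after localization, the Bianchi eigenvariety dimension bound $\le \dim\Ws - \delta = 1$ (plus the lower bound giving a positive-dimensional family through $\pi$), and Poincar\'e--Serre duality --- and the formal consequences (Euler characteristic constancy forcing $\rk\HH^{1}=\rk\HH^{2}=0$; $\Hom_{\mathcal{O}_{\mathcal{U}}}(\text{torsion},\mathcal{O}_{\mathcal{U}})=0$ killing $\HH^{1}(\Di_{\chi_{\mathcal{U}}})_{\m_{\pi}}$; Nakayama and the rank-one computation over the DVR $\mathcal{O}_{\mathcal{S},\cf}$) all check out. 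The one place I would press you is the clean identification $\HH^{1}(\Di_{\chi_{\mathcal{U}}})_{\m_{\pi}}\cong\Hom_{\mathcal{O}_{\mathcal{U}}}(\HH^{2}(\Di_{\chi_{\mathcal{U}}})_{\m_{\pi}},\mathcal{O}_{\mathcal{U}})$: what duality naturally provides is a relation between $\Di$-valued cohomology and $\An$-valued cohomology with the weight twisted by an involution, and one needs both the universal-coefficients step relating $\An$ to $\Di$ and a hyper-$\Ext$ spectral sequence argument, in which it is precisely the vanishing of $\HH^{0}_{\m_{\pi}}$ and $\HH^{3}_{\m_{\pi}}$ that makes the only surviving contribution to $\HH^{1}$ the $\Hom$-term. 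You do flag this as a cited input rather than something you prove, so the argument is appropriately calibrated for a statement the paper itself treats as known; if you wanted a self-contained account this duality step is the part that would need to be written out fully.
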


The following proposition completes the picture by taking into account the cohomology in degree $2$.
\begin{Pro}
For every curve $\mathcal{S}$ as in Theorem \ref{thm:HBSW} \eqref{HBSW2} that is smooth at $\cf$ we have
$$\HH^2(X_{K^{\p}\times I_{\p}}, \Di_{\chi_{\mathcal{S}}})_{\m_{\pi}}\neq 0.$$
More precisely, the space is free of rank one over $\mathcal{O}_{\mathcal{S}, \cf}$ and the canonical map
$$\HH^2(X_{K^{\p}\times I_{\p}}, \Di_{\chi_{\mathcal{S}}})_{\m_{\pi}}\otimes_{\mathcal{O}_\mathcal{S}(\mathcal{S})}\Co\too \HH^2(X_{K^{\p}\times I_{\p}}, \Co)_{\m_{\pi}}$$
is an isomorphism.
Moreover, $\HH^2(X_{K^{\p}\times I_{\p}}, \Di_{\chi_{\mathcal{S}}})_{\m_{\pi}}$ and $\HH^1(X_{K^{\p}\times I_{\p}}, \Di_{\chi_{\mathcal{S}}})_{\m_{\pi}}$ are isomorphic as modules over the Hecke algebra.
\end{Pro}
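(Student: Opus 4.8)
The plan is to carry everything out over the ring $\mathcal{O}_{\mathcal{S},\cf}$, which is a discrete valuation ring because $\mathcal{S}$ is a rigid-analytic curve smooth at $\cf$; fix a uniformiser $\pi_{\mathcal{S}}\in\mathcal{O}_{\mathcal{S},\cf}$. Two preliminary facts are needed. First, $\HH^d(X_{K^{\p}\times I_{\p}},\Di_{\chi_{\mathcal{S}}})_{\m_{\pi}}=0$ for $d\notin\{1,2\}$: in degree $0$ this is the non-Eisenstein property of $\m_{\pi}$, and in degrees $\geq 3$ it holds because the arithmetic groups in play have virtual cohomological dimension $2$ (alternatively one can feed the degree-$2$ concentration of Theorem \ref{thm:HBSW} \eqref{HBSW1} into Hansen's $\Tor$-spectral sequence for the closed immersion $\mathcal{S}\hookrightarrow\mathcal{U}$, \cite{Hansen}, Theorem 3.3.1). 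Second, after localising at $\m_{\pi}$ — which lies in slope $0$ — the control theorem (Theorem \ref{overconv}) identifies $\HH^d(X_{K^{\p}\times I_{\p}},\Di_{\chi_{\mathcal{S}}}\otimes_{\mathcal{O}_{\mathcal{S},\cf}}\Co)_{\m_{\pi}}$ with $\HH^d(X_{K^{\p}\times I_{\p}},\Co)_{\m_{\pi}}$, so that the reduction map of Theorem \ref{thm:HBSW} \eqref{HBSW2} is the canonical map in the long exact sequence attached to $0\to\Di_{\chi_{\mathcal{S}}}\xrightarrow{\pi_{\mathcal{S}}}\Di_{\chi_{\mathcal{S}}}\to\Di_{\chi_{\mathcal{S}}}\otimes_{\mathcal{O}_{\mathcal{S},\cf}}\Co\to 0$ (this sequence is exact since the spaces $\An^{n}_{\chi_{\mathcal{S}}}$ are orthonormalisable).

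Writing $H^d=\HH^d(X_{K^{\p}\times I_{\p}},\Di_{\chi_{\mathcal{S}}})_{\m_{\pi}}$ and $\overline{H}{}^d=\HH^d(X_{K^{\p}\times I_{\p}},\Co)_{\m_{\pi}}$, the relevant portion of that long exact sequence is $0\to H^1\xrightarrow{\pi_{\mathcal{S}}}H^1\xrightarrow{r_1}\overline{H}{}^1\xrightarrow{\delta}H^2\xrightarrow{\pi_{\mathcal{S}}}H^2\xrightarrow{r_2}\overline{H}{}^2\to 0$. By Theorem \ref{thm:HBSW} \eqref{HBSW2}, $H^1\cong\mathcal{O}_{\mathcal{S},\cf}$ is free of rank one and $r_1$ is surjective; hence $\delta=0$, hence multiplication by $\pi_{\mathcal{S}}$ is injective on $H^2$. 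A finitely generated, torsion-free module over a discrete valuation ring is free, so $H^2$ is a free $\mathcal{O}_{\mathcal{S},\cf}$-module, and exactness on the right gives the isomorphism $H^2\otimes_{\mathcal{O}_{\mathcal{S},\cf}}\Co\xrightarrow{\ \sim\ }\overline{H}{}^2$, which in particular is non-zero. For the rank: surjectivity of $r_1$ together with $\ker r_1=\pi_{\mathcal{S}}H^1$ forces $\overline{H}{}^1\cong H^1/\pi_{\mathcal{S}}\cong\Co$, and since the cohomological representation of $\PGL_2(\C)$ has $(\mathfrak{g},K)$-cohomology of dimension $1$ in degrees $1$ and $2$, the eigensystem $\m_\pi$ contributes one dimension to each of $\HH^1$ and $\HH^2$, so $\dim_{\Co}\overline{H}{}^2=1$; therefore $H^2$ is free of rank one over $\mathcal{O}_{\mathcal{S},\cf}$.

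It remains to identify $H^1$ and $H^2$ as Hecke modules. Both are free of rank one over $\mathcal{O}_{\mathcal{S},\cf}$, so the spherical Hecke algebra acts on each through a character valued in $\mathcal{O}_{\mathcal{S},\cf}$, and it suffices to show these two characters coincide. I would deduce this from Poincaré--Lefschetz duality: $X_{K^{\p}\times I_{\p}}$ is an oriented $3$-orbifold and $\An_{\chi_{\mathcal{S}}}$, $\Di_{\chi_{\mathcal{S}}}$ are $\mathcal{O}_{\mathcal{S},\cf}$-dual coefficient systems, so after localising at the cuspidal ideal $\m_{\pi}$ — where compactly supported, interior and ordinary cohomology agree — one obtains a perfect pairing $H^1\times H^2\to\mathcal{O}_{\mathcal{S},\cf}$ under which each Hecke operator is adjoint to its image under the transpose-inverse involution; since the central character is trivial this involution fixes the spherical Hecke eigenvalues, whence the two characters agree and any $\mathcal{O}_{\mathcal{S},\cf}$-linear isomorphism $H^1\cong H^2$ is automatically Hecke-equivariant. (Alternatively, one can argue that the two systems of Hecke eigenvalues so obtained both reduce at $\cf$ to that of $\pi$ and hence cut out the same branch $\mathcal{S}$ of the eigenvariety.)

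I expect the main obstacle to be this last step. Setting up Poincaré--Lefschetz duality for overconvergent cohomology with affinoid (resp. DVR) coefficient sheaves, together with the matching of the Hecke action — including the Atkin--Lehner twist and the identification of compactly supported with ordinary cohomology after $\m_{\pi}$-localisation for cuspidal $\m_{\pi}$ — is technical, and this is precisely the point at which the unpublished work of Hansen (see also \cite{BWi2}) is invoked, exactly as in the proof of Theorem \ref{thm:HBSW}.
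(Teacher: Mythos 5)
Your first two paragraphs follow essentially the paper's own route: the short exact sequence multiplication-by-a-uniformiser of $\mathcal{O}_{\mathcal{S},\cf}$, the resulting long exact sequence, vanishing of the connecting map from surjectivity in Theorem \ref{thm:HBSW}~\eqref{HBSW2}, vanishing in degree $3$ (the paper cites \eqref{HBSW1} and implicitly the Tor-spectral sequence, which is one of your two options), torsion-freeness over the DVR giving freeness, and Nakayama plus the one-dimensionality of $\HH^2(X_{K^{\p}\times I_{\p}},\Co)_{\m_\pi}$ giving rank one. That part is fine.

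The gap is in the Hecke-module comparison, and it is not merely ``technical.'' Poincar\'e--Lefschetz duality pairs $\Di_{\chi_{\mathcal{S}}}$-cohomology against $\An_{\chi_{\mathcal{S}}}$-cohomology (the analytic functions, not the distributions): one gets a pairing of the shape $\HH^1(\Di_{\chi_{\mathcal{S}}})\times\HH^2_c(\An_{\chi_{\mathcal{S}}})\to\mathcal{O}_{\mathcal{S},\cf}$. Even after localising at $\m_\pi$ and identifying compactly supported with ordinary cohomology, this compares $\HH^1(\Di_{\chi_{\mathcal{S}}})_{\m_\pi}$ with $\HH^2(\An_{\chi_{\mathcal{S}}})_{\m_\pi}$, not with $\HH^2(\Di_{\chi_{\mathcal{S}}})_{\m_\pi}$. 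There is no self-duality of $\Di_{\chi_{\mathcal{S}}}$ that would close this gap. Your fallback argument --- that two Hecke characters into $\mathcal{O}_{\mathcal{S},\cf}$ reducing to the same system at $\cf$ must coincide --- also does not hold: distinct branches of an eigenvariety can meet at a single point, which is precisely why the Bianchi case is delicate and why one cannot assume $\p$-\'etaleness here.

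The paper's mechanism is different and avoids both problems. One works not only over $\mathcal{O}_{\mathcal{S},\cf}$ but also over $\mathcal{O}_{\mathcal{U},\cf}$: set $M=\HH^2(X_{K^{\p}\times I_{\p}},\Di_{\chi_{\mathcal{U}}})_{\m_\pi}$ and let $r$ generate the ideal cutting out $\mathcal{S}$ in $\mathcal{U}$. By Theorem \ref{thm:HBSW}~\eqref{HBSW1} and the Tor-spectral sequence, $\HH^1(X_{K^{\p}\times I_{\p}},\Di_{\chi_{\mathcal{S}}})_{\m_\pi}\cong M[r]$ and $\HH^2(X_{K^{\p}\times I_{\p}},\Di_{\chi_{\mathcal{S}}})_{\m_\pi}\cong M/rM$, both \emph{inside a single ambient Hecke-module $M$}. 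Since $M/\m_\cf M\cong\HH^2(X_{K^{\p}\times I_{\p}},\Co)_{\m_\pi}$ is one-dimensional, $M$ is a cyclic $\mathcal{O}_{\mathcal{U},\cf}$-module by Nakayama, hence the Hecke action on $M$ is by multiplication by elements of $\mathcal{O}_{\mathcal{U},\cf}$, and therefore induces the \emph{same} $\mathcal{O}_{\mathcal{S},\cf}$-valued character on $M[r]$ and $M/rM$; since both are free of rank one over $\mathcal{O}_{\mathcal{S},\cf}$, any linear isomorphism between them is automatically Hecke-equivariant. The point you are missing is this passage through the two-variable ring $\mathcal{O}_{\mathcal{U},\cf}$, which is what ties the two cohomological degrees to one another.
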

\begin{proof}
Let $m$ be a generator of the maximal ideal of the localization.
From the short exact sequence
$$0 \rightarrow \Di_{\chi_{\mathcal{S}}} \stackrel{\times m}{\too} \Di_{\chi_{\mathcal{S}}} \rightarrow \Di_{\cf} \too 0$$
we obtain a long exact sequence in cohomology 
\begin{align*}
& \xrightarrow{\times m}\HH^{1}(X_{K^{\p}\times I_{\p}}, \Di_{\chi_{\mathcal{S}}})_{\m_{\pi}} \too \HH^1(X_{K^{\p}\times I_{\p}}, \Co)_{\m_{\pi}} \xrightarrow{0} \HH^2(X_{K^{\p}\times I_{\p}}, \Di_{\chi_{\mathcal{S}}})_{\m_{\pi}}  \\
& \xrightarrow{\times m} \HH^2(X_{K^{\p}\times I_{\p}}, \Di_{\chi_{\mathcal{S}}})_{\m_{\pi}} \too \HH^2(X_{K^{\p}\times I_{\p}}, \Co)_{\m_{\pi}} \too 0.
\end{align*}
Here the second map is the zero map as the first map is surjective by the second part of Theorem \ref{thm:HBSW}, and the last term is $0$ because the system of eigenvalues for $\pi$ doesn't appear in degrees greater by Theorem \ref{thm:HBSW} \eqref{HBSW1}. 

We then get that multiplication by $m$ is injective on $\HH^2(X_{K^{\p}\times I_{\p}}, \Di_{\chi_{\mathcal{S}}})_{\m_{\pi}}$ and, therefore, the map
$$\HH^2(X_{K^{\p}\times I_{\p}}, \Di_{\chi_{\mathcal{S}}})_{\m_{\pi}}\otimes_{\mathcal{O}_{\mathcal{S}, \cf}}\Co\too \HH^2(X_{K^{\p}\times I_{\p}}, \Co)_{\m_{\pi}}$$
is an isomorphism.
By hypothesis, $\HH^2(X_{K^{\p}\times I_{\p}}, \Co)_{\m_{\pi}}$ is one-dimensional.
Therefore, the $\mathcal{O}_{\mathcal{S}, \cf}$-module $\HH^2(X_{K^{\p}\times I_{\p}}, \Di_{\chi_{\mathcal{S}}})_{\m_{\pi}}$ is cyclic by Nakayama's Lemma.
If it were torsion, then multiplication by $m$ would not be injective on it, which is a contradiction; so it is free.

Let $r$ be a generator of the ideal of $\mathcal{O}_{\mathcal{U},\cf}$ corresponding to $\mathcal{S}$.
The modules $\HH^1(X_{K^{\p}\times I_{\p}}, \Di_{\chi_{\mathcal{S}}})_{\m_{\pi}}$ respectively $\HH^2(X_{K^{\p}\times I_{\p}}, \Di_{\chi_{\mathcal{S}}})_{\m_{\pi}}
$ are kernel respectively cokernel of the multiplication by $r$ map on $\HH^2(X_{K^{\p}\times I_{\p}}, \Di_{\chi_{\mathcal{U}}})_{\m_\pi}$.
Multiplication by the appropriate power of $r$ yields the sought-after isomorphism.
\end{proof}

We would like to apply Theorem \ref{thm:Linvderivative} to conclude that $\LI$-invariants are independent of the cohomological degree.
There are however several problems:
first, it is not clear whether one can find a curve $\mathcal{S}$ as in the theorem that is smooth at $\cf$.
Second, if one finds such a curve, the automorphic Colmez--Greenberg--Stevens formula in the inert case only produces one element in the intersection of $\LI^{(0)}(f,p)$ and $\LI^{(1)}(f,p)$.
As these spaces are two-dimensional by Proposition \ref{codimension} this gives us no new information.
In the split case the situation is better: one can at least compute the $\LI$-invariant of one of the primes lying above $p$ and, if the tangent space of $\mathcal{S}$ at $\cf$ is generic enough, one can compute both.
\begin{Cor}
Let $F$ be an imaginary quadratic field unramified at $p$ and let $\pi$ be a cuspidal Bianchi newform of parallel weight $2$.
Suppose that $p$ splits in $F$ as $p=\p \overline{\p}$ and $f$ is Steinberg at both, $\p$ and $\overline{\p}$. Then at least one of the equalities
\begin{align*}
\LI^{(0)}(\pi,\p)&=\LI^{(1)}(\pi,\p)\\
\intertext{or}
\LI^{(0)}(\pi,\overline{\p})&=\LI^{(1)}(\pi,\overline{\p})
\end{align*}
holds.
\end{Cor}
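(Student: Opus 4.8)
The plan is to imitate the proof of Theorem~\ref{thm:Linvderivative}, deforming along the curve $\mathcal{S}$ furnished by Theorem~\ref{thm:HBSW}~\eqref{HBSW2} in place of an open of the full weight space, and then to note that the only obstruction to running the argument at a fixed prime above $p$ is the vanishing of the derivative of the weight in the direction of $\mathcal{S}$, which cannot happen at both $\p$ and $\overline{\p}$ at once. Recall that here $\delta=1$ and $q=1$, so the two $\LI$-invariants in question live in the two extremal degrees $q$ and $q+\delta$, and that $m_\pi=1$ by multiplicity one for $\mathrm{GL}_2$. Fix a curve $\mathcal{S}\subseteq\mathcal{U}$ through $\cf$ as in Theorem~\ref{thm:HBSW}~\eqref{HBSW2}; after replacing it by a branch at $\cf$ and, if needed, normalizing, assume $\mathcal{S}$ is smooth at $\cf$, so $\mathcal{O}_{\mathcal{S},\cf}$ is a discrete valuation ring and $A:=\mathcal{O}_{\mathcal{S},\cf}/\m_{\mathcal{S},\cf}^{2}\cong\Co[\varepsilon]$. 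By the proposition preceding this corollary, both $\HH^{1}(X_{K^{\p}\times I_{\p}},\Di_{\chi_{\mathcal{S}}})_{\m_{\pi}}$ and $\HH^{2}(X_{K^{\p}\times I_{\p}},\Di_{\chi_{\mathcal{S}}})_{\m_{\pi}}$ are free of rank one over $\mathcal{O}_{\mathcal{S},\cf}$, are isomorphic as Hecke modules, and surject onto classical cohomology, while the remaining cohomology groups with coefficients in $\Di_{\chi_{\mathcal{S}}}$ vanish after localizing at $\m_{\pi}$; in particular $U_{t_\p}$ and $U_{t_{\overline{\p}}}$ act on both modules by multiplication by the same units $\alpha_\p,\alpha_{\overline{\p}}\in\mathcal{O}_{\mathcal{S},\cf}^{\ast}$ (units because their values at $\cf$ are nonzero, by the Steinberg hypothesis). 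Let $\chi_\alpha\colon T\to\mathcal{O}_{\mathcal{S},\cf}^{\ast}$ be the character restricting to the universal weight on $T_0$ with $t_i\mapsto\alpha_i$, and put $\widetilde{\chi}_\alpha:=\chi_\alpha\bmod\m_{\mathcal{S},\cf}^{2}$; since $\mathcal{S}$ is (a branch of) the eigenvariety, the weight underlying $\widetilde{\chi}_\alpha$ factors through $\mathcal{S}$.

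Next I would feed $\widetilde{\chi}_\alpha$ into the Koszul resolution of Theorem~\ref{Koszulres}. By flat base change the overconvergent cohomology occurring there equals $\HH^{q'}(X_{K^{\p}\times I_{\p}},\Di_{\chi_{\mathcal{S}}})_{\m_{\pi}}\otimes_{\mathcal{O}_{\mathcal{S},\cf}}A$ for $q'\in\{1,2\}$ and vanishes otherwise, and on it the Koszul operators $y_i=U_{t_i}-\widetilde{\chi}_\alpha(t_i)=\alpha_i-\alpha_i$ act by zero. Hence, arguing exactly as in the proofs of Proposition~\ref{principalisom} and Theorem~\ref{thm:Linvderivative}, the image of the reduction map
\begin{align*}
\red_{\widetilde{\chi}_\alpha}^{q+d,\epsilon}\colon\HH^{q+d}(G(F),\Fu^{\cont}_{A}(K^\p,\II_{B}^{\an}(\widetilde{\chi}_\alpha);A(\epsilon)))_{\m_{\pi}^{\p}}
&\too\HH^{q+d}(G(F),\Fu^{\cont}_{\Co}(K^\p,\II_{B}^{\an}(\Co);\Co(\epsilon)))_{\m_{\pi}^{\p}}
\end{align*}
contains the whole $\pi^{\p}$-isotypic component of its target, for $d=0$ and for $d=1$. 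Proposition~\ref{CGS} then yields, for every tangent vector $v$ of $\mathcal{S}$ at $\cf$ and each of the two primes $i\in\{\p,\overline{\p}\}$,
$$\frac{\partial}{\partial v}\chi_{\alpha,i}\in\LI^{(0)}(\pi,i)\cap\LI^{(1)}(\pi,i).$$

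Finally I would conclude as at the end of the proof of Theorem~\ref{thm:Linvderivative}. Since $p$ splits, $F_\p=\Q_p$ and $\Hom^{\cont}(F_\p^{\ast},\Co)$ is two-dimensional, while by Proposition~\ref{codimension} each of $\LI^{(0)}(\pi,\p),\LI^{(1)}(\pi,\p),\LI^{(0)}(\pi,\overline{\p}),\LI^{(1)}(\pi,\overline{\p})$ has codimension exactly one, i.e.\ is a line. By the explicit formula in the proof of Theorem~\ref{thm:Linvderivative}, the $\log_p$-coefficient of $\frac{\partial}{\partial v}\chi_{\alpha,i}$ equals $\alpha_i(\cf)=1$ times the derivative of the weight at $i$ along $v$, so $\frac{\partial}{\partial v}\chi_{\alpha,i}$ is nonzero and not a multiple of $\ord_\p$ as soon as the weight at $i$ varies to first order along $\mathcal{S}$. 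The weight map $\mathcal{S}\to\Ws=\Ws_\p\times\Ws_{\overline{\p}}$ is finite and $\mathcal{S}$ is a curve, hence non-constant; granting that it is unramified at $\cf$ in at least one coordinate, the weight varies to first order at some prime $i$, and for that prime the lines $\LI^{(0)}(\pi,i)$ and $\LI^{(1)}(\pi,i)$ both equal $\Co\cdot\frac{\partial}{\partial v}\chi_{\alpha,i}$, which is the asserted equality.

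The difficulty is entirely in the input, not in the Koszul bookkeeping. One needs a curve $\mathcal{S}$ through $\cf$ that may be taken smooth — equivalently, one whose overconvergent cohomology in degrees $1$ and $2$ remains free of rank one — which is precisely the point flagged before the statement; and one needs the weight map $\mathcal{S}\to\Ws$ to be unramified at $\cf$ in at least one coordinate, so that a first-order deformation already records a change of weight at one prime (otherwise one must work with a higher-order jet along $\mathcal{S}$, and with a correspondingly longer Koszul computation, to recover the $\LI$-invariant at one of the primes). I expect this second point — controlling the ramification of the Bianchi eigencurve over weight space at $\cf$ — to be the genuine obstacle; once it is granted, the rest is the Koszul and spectral-sequence machinery already developed in the preceding sections.
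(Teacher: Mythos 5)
Your proposal follows the same overall strategy as the paper: take the curve $\mathcal{S}$ furnished by Theorem~\ref{thm:HBSW}~\eqref{HBSW2}, push a first-order deformation through the Koszul resolution and Proposition~\ref{CGS} in both cohomological degrees $q=1$ and $q+\delta=2$, and finish by dimension counting in the two-dimensional space $\Hom^{\cont}(\Q_p^{\ast},\Co)$, using that $d=0$ and $d=\delta=1$ are both extremal so both $\LI$-invariants are lines by Proposition~\ref{codimension}.

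The normalization step you insert, however, is not only unnecessary but is exactly what manufactures the ``unramified in at least one coordinate'' worry that you then flag as a potential obstacle. Theorem~\ref{thm:HBSW} produces $\mathcal{S}$ as a \emph{subvariety} of $\mathcal{U}\subseteq\Ws=\Ws_\p\times\Ws_{\overline\p}$, so the weight map on $\mathcal{S}$ is the inclusion. If $\mathcal{S}$ is smooth at $\cf$ (the hypothesis needed anyway for the Proposition preceding the Corollary, which gives freeness and surjectivity), then $T_{\cf}\mathcal{S}$ is a nonzero line inside $T_{\cf}\Ws_\p\oplus T_{\cf}\Ws_{\overline\p}$, and a nonzero vector in a direct sum of two lines automatically projects nontrivially onto at least one summand. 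That pigeonhole observation is precisely what the paper uses (``one can at least compute the $\LI$-invariant of one of the primes''; the ``generic enough'' caveat concerns getting \emph{both}). Passing to a branch and normalizing destroys the embedding $\mathcal{S}\hookrightarrow\Ws$, which is what makes a totally ramified weight map conceivable in your version; and after normalizing you would also need to re-prove that $\HH^{1}$ and $\HH^{2}$ of $\Di_{\chi_{\tilde{\mathcal{S}}}}$ localized at $\m_\pi$ are free of rank one over $\mathcal{O}_{\tilde{\mathcal{S}},\tilde\cf}$, which the Proposition does not give you for a normalized curve.

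The genuine residual point, which you correctly isolate and which the paper itself flags in the paragraph preceding the Corollary, is whether $\mathcal{S}$ can be taken smooth at $\cf$: Theorem~\ref{thm:HBSW}~\eqref{HBSW2} asserts the existence of a curve but states the freeness and surjectivity only conditionally on smoothness, and the paper does not argue that smoothness always holds (it is established explicitly only for the parallel-weight curve in the base-change case, in the proof of Theorem~\ref{thm:Bianchi}). Granting that hypothesis and dropping the normalization, your argument closes cleanly.
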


\begin{Rem}
It was shown in \cite{Ge3} that the theorem above would also follow from Venkatesh's conjectures on the action of derived Hecke algebras. 
\end{Rem}

The situation simplifies substantially if $\pi$ is the base change of a modular form $f$:
\begin{Thm}\label{thm:Bianchi}
Let $F$ be an imaginary quadratic field unramified at $p$.
Let $f$ be a newform of weight $2$ that is Steinberg at $p$ and $\BC(f)$ its base change to $F$.
The following holds:
\begin{enumerate}[(i)]
\item If $p=\p \overline{\p}$ is split in $F$, then
$$\LI^{(0)}(\BC(f),\p)=\LI^{(1)}(\BC(f),\p)=\LI^{(0)}(\BC(f),\overline{\p})=\LI^{(1)}(\BC(f),\overline{\p})=\LI(f,p).$$
\item If $p$ is inert, we have
$$\LI^{(0)}(\BC(f),p)\cap \Hom^{\cont}(\Q_p^{\ast},\Co)=\LI^{(1)}(\BC(f),p) \cap \Hom^{\cont}(\Q_p^{\ast},\Co)= \LI(f,p).$$
\item If $p$ is inert, we have
$$\LI^{(0)}(\BC(f),p)=\LI^{(1)}(\BC(f),p).$$
\end{enumerate} 
\end{Thm}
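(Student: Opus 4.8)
To prove (iii), the plan is to exploit the symmetry of $\BC(f)$ under the nontrivial automorphism $c$ of $F/\Q$ in order to upgrade the identification of $\LI$-invariants obtained in part (ii) over $\Hom^{\cont}(\Q_p^{\ast},\Co)$ to an equality over the whole of $V:=\Hom^{\cont}(F_p^{\ast},\Co)$. Since $p$ is inert, $c$ fixes the unique prime of $F$ above $p$ and restricts to the nontrivial element of $\Gal(F_p/\Q_p)$; hence it acts on $F_p^{\ast}$ and on $V$. We may assume $\Co$ is large enough, and then in the basis $\{\ord_p,\log_{p,\sigma},\log_{p,\bar\sigma}\}$ of $V$ from the Remark following Proposition \ref{codimension} the involution $c$ fixes $\ord_p$ and interchanges $\log_{p,\sigma}$ with $\log_{p,\bar\sigma}$. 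Therefore its $(+1)$-eigenspace $V^{+}$ is the two-dimensional subspace spanned by $\ord_p$ and $\log_{p,\sigma}+\log_{p,\bar\sigma}$ --- which is precisely the image of $\Hom^{\cont}(\Q_p^{\ast},\Co)$ under pullback along the norm $N_{F_p/\Q_p}$, that is, the subspace featuring in part (ii) --- while its $(-1)$-eigenspace $V^{-}$ is the line spanned by $\log_{p,\sigma}-\log_{p,\bar\sigma}$.

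The first and crucial step is to check that each of $\LI^{(0)}(\BC(f),p)$ and $\LI^{(1)}(\BC(f),p)$ is stable under the involution $\lambda\mapsto\lambda\circ c$ of $V$. Since $\BC(f)$ is a base change, it is isomorphic to its $c$-conjugate; choosing the tame level $c$-stable, $c$ then acts on all of the cohomology groups entering the definition of the maps $c^{(d)}$ in Section \ref{Definition}, preserving and acting invertibly on the isotypic components for the Hecke action away from $p$. Moreover, the extension classes of Section \ref{Extensions} and the identification of Theorem \ref{Ding} are built functorially out of the pair $(G_p,B)$, so $c$ carries the extension class attached to $\lambda$ to the one attached to $\lambda\circ c$. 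By naturality of the cup product, $c$ thus intertwines $c^{(q+d)}(\lambda)$ with $c^{(q+d)}(\lambda\circ c)$ on the relevant isotypic components, and since $c$ acts invertibly there, $c^{(q+d)}(\lambda)=0$ if and only if $c^{(q+d)}(\lambda\circ c)=0$. Hence $\LI^{(d)}(\BC(f),p)$ is $c$-stable.

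It remains to combine $c$-stability with part (ii) by elementary linear algebra. By Proposition \ref{codimension}, applied in the extremal degrees $d=0$ and $d=\delta=1$ and using that $m_\pi=1$ (which follows from strong multiplicity one for $\GL_2$), both $\LI^{(0)}(\BC(f),p)$ and $\LI^{(1)}(\BC(f),p)$ are two-dimensional subspaces of the three-dimensional space $V$. A $c$-stable subspace $W\subseteq V$ decomposes as $(W\cap V^{+})\oplus(W\cap V^{-})$; if $\dim W=2$ and $W\cap V^{-}=0$ then $W\subseteq V^{+}$ and so $W=V^{+}$, whereas part (ii) identifies $\LI^{(d)}(\BC(f),p)\cap V^{+}$ with $\LI(f,p)$, which is one-dimensional and hence proper in $V^{+}$. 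This excludes $W=V^{+}$, so each $\LI^{(d)}(\BC(f),p)$ contains $V^{-}$, and being two-dimensional it equals $\bigl(\LI^{(d)}(\BC(f),p)\cap V^{+}\bigr)\oplus V^{-}=\LI(f,p)\oplus V^{-}$, independently of $d$. Therefore $\LI^{(0)}(\BC(f),p)=\LI^{(1)}(\BC(f),p)$. The hard part is the equivariance bookkeeping of the first step; once that is in place, the conclusion is immediate from part (ii) and a dimension count.
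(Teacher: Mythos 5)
Your argument only addresses part (iii); parts (i) and (ii) are taken as input (``the identification of $\LI$-invariants obtained in part (ii)'') and are never proved. In the paper these are the substantive content of the theorem: one needs the parallel-weight curve $\mathcal{S}\subseteq\Ws$ from Theorem \ref{thm:HBSW}, its smoothness at $\cf$ and the freeness/surjectivity statements in degrees $1$ and $2$, together with $p$-adic Langlands functoriality relating the Coleman family through $f$ to the Bianchi family through $\BC(f)$, and then Theorem \ref{thm:Linvderivative} (for $f$) and its Bianchi analogue obtained from Proposition \ref{CGS} along $\mathcal{S}$, so that both $\LI(f,p)$ and the $\LI^{(d)}(\BC(f),\cdot)$ are computed by the \emph{same} derivatives of $U_\p$-eigenvalues. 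None of this appears in your proposal, so as a proof of the stated theorem it has a genuine gap: the key analytic input (deforming along the parallel-weight family and transferring it through base change) is missing, and without it there is nothing to feed into your linear-algebra step.

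For part (iii) itself your route is essentially the paper's: the paper deduces it from part (ii) together with the Galois invariance of automorphic $\LI$-invariants of base-change representations (cited there as Lemma 3.1 of \cite{Ge2}), which is exactly your $c$-stability claim; you re-derive that invariance by hand (conjugation-equivariance of the cohomology, of Ding's extension classes, and of the cup products) and then make the deduction explicit via the eigenspace decomposition $V=V^{+}\oplus V^{-}$, $V^{+}$ being the norm-pullback of $\Hom^{\cont}(\Q_p^{\ast},\Co)$. That bookkeeping is correct (using $m_\pi=1$ and Proposition \ref{codimension} in the extremal degrees $d=0,\delta=1$), and it even yields the slightly finer statement $\LI^{(d)}(\BC(f),p)=\LI(f,p)\oplus V^{-}$ for both $d$. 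So the portion you wrote is sound and close to the paper's own argument; what is missing is everything establishing (i) and (ii).
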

\begin{proof}
In Section 5 of \cite{BWi} it is shown that on may take $\mathcal{S}\subseteq \Ws$ to be the parallel weight curve (see Section 5 of \cite{BWi}), which is clearly smooth at $\cf$.
Using $p$-adic Langlands functoriality as explained in \textit{loc.~cit.~} the first two claims follow by applying Theorem \ref{thm:Linvderivative} (respectively its analogue in the Bianchi setting) for the modular form $f$ and its base change to $F$.
The last claim is a consequence of the second claim and the Galois invariance of automorphic $\LI$-invariants attached to base change representations (see Lemma 3.1 of \cite{Ge2}).
\end{proof}

\begin{Rem}
In the inert case the equality $\LI^{(0)}(\BC(f),p)\cap \Hom^{\cont}(\Q_p^{\ast},\Co)=\LI(f,p)$ was proven in \cite{Ge2}, Lemma 3.3, using Artin formalism for $p$-adic $L$-functions and the exceptional zero formula.
An analogous result for higher weight forms was proven by Barrera-Salazar and Williams in \cite{BWi}, Proposition 10.2.

Our approach can be adapted to higher weights making the construction of Stark--Heegner cycles of \cite{VW} unconditional in the base change case.
\end{Rem}

\bibliographystyle{alpha}
\bibliography{bibfile}

\end{document}